\definecolor{dullmagenta}{rgb}{0.4,0,0.4}   % #660066
\definecolor{darkblue}{rgb}{0,0,0.4}
\newtheorem{theorem}{Theorem}[section]
\newtheorem{lemma}[theorem]{Lemma}
\newtheorem{definition}[theorem]{Definition}
\newtheorem{corol}[theorem]{Corollary}
\newtheorem{prop}[theorem]{Proposition}
\newenvironment{remark}%
  {\par\medbreak\refstepcounter{theorem}%
    \noindent\textbf{Remark~\thetheorem. }}%
  {\par\medskip}
\newtheorem{question}[theorem]{Question}
\newcommand{\vz}[1]{\ensuremath{\mathbb{#1}}}
\newcommand{\R}{{\vz R}}
\newcommand{\N}{{\vz N}}
\newcommand{\dvg}{\text{div}\,}
\newcommand{\TV}{\text{TV}}
\newcommand{\TVa}{\text{TV}_{\text{a}}}
\newcommand{\sgn}{\text{sgn}}
\let\e\varepsilon
\newcounter{hdps}
\newenvironment{asm}[1]
  {\refstepcounter{hdps}%
    \begin{algorithm}}
  {\end{algorithm}\addtocounter{algocf}{-1}}
\title{Mean curvature, threshold dynamics, \\ and phase field theory on finite graphs}
\date{\today}
\author[Y. van Gennip]{Yves van Gennip}
\address{%
School of Mathematical Sciences \\
The University of Nottingham \\
University Park \\
Nottingham, NG7 2RD \\
UK
}
\email{y.vangennip@nottingham.ac.uk}
\author[N. Guillen]{Nestor Guillen}
\address{%
Department of Mathematics \\
University of California, Los Angeles \\
Los Angeles, CA 90095 \\
USA}
\email{nestor@math.ucla.edu}
\author[B. Osting]{Braxton Osting}
\address{%
Department of Mathematics \\
University of California, Los Angeles \\
Los Angeles, CA 90095 \\
USA}
\email{braxton@math.ucla.edu}
\author[A. L. Bertozzi]{Andrea L. Bertozzi}
\address{%
Department of Mathematics \\
University of California, Los Angeles \\
Los Angeles, CA 90095 \\
USA}
\email{bertozzi@math.ucla.edu}
\begin{document}

\maketitle

\begin{abstract}
In the continuum, close connections exist between mean curvature flow, the Allen-Cahn (AC) partial differential equation, and the Merriman-Bence-Osher (MBO) threshold dynamics scheme. Graph analogues of these processes have recently seen a rise in popularity as relaxations of NP-complete combinatorial problems, which demands deeper theoretical underpinnings of the graph processes. The aim of this paper is to introduce these graph processes in the light of their continuum counterparts, provide some background, prove the first results connecting them, illustrate these processes with examples and identify open questions for future study.

We derive a graph curvature from the graph cut function, the natural graph counterpart of total variation (perimeter). This derivation and the resulting curvature definition differ from those in earlier literature, where the continuum mean curvature is simply discretized, and bears many similarities to the continuum nonlocal curvature or nonlocal means formulation. This new graph curvature is not only relevant for graph MBO dynamics, but also appears in the variational formulation of a discrete time graph mean curvature flow.

We prove estimates showing that the dynamics are trivial for both MBO and AC evolutions if the  parameters (the time-step and diffuse interface scale, respectively) are sufficiently small (a phenomenon known as ``freezing'' or ``pinning'')  and also that the dynamics for MBO are nontrivial if the time step is large enough. These bounds are in terms of graph quantities such as the spectrum of the graph Laplacian and the graph curvature. Adapting a Lyapunov functional for the continuum MBO scheme to graphs, we prove that the graph MBO scheme converges to a stationary state in a finite number of iterations. Variations on this scheme have recently become popular in the literature as ways to minimize  (continuum) nonlocal total variation.
\end{abstract}

\noindent {\bf Keywords: spectral graph theory, Allen-Cahn equation, Ginzburg-Landau
functional,  Merriman-Bence-Osher threshold dynamics, graph cut
function, total variation, mean curvature flow, nonlocal mean
curvature, gamma convergence, graph coarea formula.}\\

\noindent {\bf  MSC2010: 34B45, 35R02, 53C44, 53A10, 49K15, 49Q05, 35K05.}\\

\clearpage
\tableofcontents

\clearpage
\section{Introduction}
Motion by mean curvature and flows involving mean curvature in general appear in many important \emph{continuum} models, including models coming from materials science \cite{Mullins64,Taylor92}, fluid dynamics \cite{HeleShaw1898}, and combustion \cite{Xin11,Peters00}. All such models involve a front propagating with a velocity depending on the mean curvature of the front. Recently, there has been an increasing interest in using ideas from continuum PDEs (related to  mean curvature) in \emph{discrete} applications such as image analysis, machine learning and data clustering \cite{BertozziFlenner12, vanGennipBertozzi12,MerkurjevKosticBertozzi2012,Garcia-CardonaMerkurjevBertozziFlennerPercus2013,HuLaurentPorterBertozzi13}. %\cite{BertozziFlenner12, vanGennipBertozzi12}.

This paper  initiates a systematic study of the definition of mean curvature for vertex sets of an arbitrary graph $G=(V,E)$. We examine the effectiveness of the algorithms in the recent papers mentioned above
%\cite{BertozziFlenner12, vanGennipBertozzi12,MerkurjevKosticBertozzi2012,Garcia-CardonaMerkurjevBertozziFlennerPercus2013,HuLaurentPorterBertozzi13} %\BlueComment{Add references to Tijana, Huiyi, and Ekaterina's papers }
and on how they may be improved. The graphs considered are arbitrary graphs and are not necessarily obtained as the discretization of a continuum problem, so our perspective is only parallel to one that is purely motivated by numerical analysis. In particular, we do not assume an embedding of the graph in a  low dimensional space.

Of course, the various definitions of curvature in the (usual) continuum setting (see Appendix~\ref{sec:continuum} for a brief review and some references) motivate and inform our approach to defining the curvature of a vertex set $S\subset V$ using the discrete total variation norm and the discrete divergence of a ``normal'' edge flow. Since they are closely related to questions of mean curvature, the Allen-Cahn equation and the MBO scheme for arbitrary graphs $G=(V,E)$ arise naturally in the present investigation. Theoretical and numerical examples are used to highlight possible connections between all these concepts, leading to a number of open questions given in Section~\ref{sec:conjectures}.

\paragraph{Graphs Laplacians, Allen-Cahn, and MBO} Graph Laplacians are the central objects of study in spectral graph theory \cite{Chung:1997}. These graph operators share many properties with their continuum counterparts. The Allen-Cahn equation on the graph $V$ is defined in terms of the graph Laplacian, $\Delta$, and any  (typically bistable quartic) potential, $W$. One considers a phase field, $u\colon V \times \mathbb{R}_+\to\mathbb{R}$, solving the differential equation,
\[
\dot u = -\Delta u - \frac1\e W'(u).
\]
This nonlinear equation has received greater attention recently, spurred by some of the applications mentioned above. % \cite{BertozziFlenner12, vanGennipBertozzi12}.
The graph Allen-Cahn equation was introduced in the context of data classification in \cite{BertozziFlenner12} and, in a number of examples, was shown to be both accurate and efficient.
As is well known, the continuum Allen-Cahn equation is the $L^2$ gradient flow associated to the Ginzburg-Landau functional. This is also true in the graph setting.
In \cite{vanGennipBertozzi12} it was shown that the graph Ginzburg-Landau functional
%from \eqref{eq:GL functional}
$\Gamma$-converges to the graph cut objective functional on graphs, if the characteristic length scale $\e$ goes to zero. Moreover, a relationship between the graph cut functional and the continuum total variation functional was given.
At the same time, the continuum Allen-Cahn solution is known to converge to mean curvature flow, when $\e \to 0$ \cite{BarlesSonerSouganidis93}. Furthermore, the mean curvature is directly related to the first variation of the total variation functional. In this paper we therefore study the graph Allen-Cahn equation and make connections between it and a graph cut derived `graph curvature'.

The third ingredient in this paper is the threshold dynamics or Merriman-Bence-Osher (MBO) algorithm on graphs. Its continuum counterpart was introduced in \cite{MBO1992,MBO1993} and consists of iteratively solving the heat equation %$u_t = \Delta u$
for a short time, $\tau$, and  thresholding the result to an indicator function. It is known that, for short diffusion times $\tau$, this approximates mean curvature flow \cite{Evans93, BarlesGeorgelin95}. In this paper we therefore also study the connections between the graph MBO scheme, the graph AC equation, and graph curvature.

In a recent series of papers \cite{ElmoatazLezorayBougleux2008,TaElmoatazLezoray2008,DesquesnesElmoatazLezorayTa2010,DesquesnesElmoatazLezoray2011,TaElmoatazLezoray2011,DesquesnesElmoatazLezoray2012,ElmoatazDesquesnesLezoray2012,ElmoatazDesquenesLezoray_desquesnes-isbi2012,DesquesnesElmoatazLezoray2012_38Desquesness} Elmoataz et al. study partial differential equations and front propagation on graphs, mainly from a numerical point of view. In these papers the $1$-Laplacian on a graph is used as curvature, which differs from our approach. We  use the anisotropic graph total variation instead of the isotropic total variation (see Section~\ref{sec:setup} for definitions), since \cite{vanGennipBertozzi12} suggests that is the natural total variation on graphs.

A common obstacle, when transferring results and intuitions from the continuum to graphs, is the (implicit) lower bound on the accessible length scales on a graph. We show in Theorem~\ref{thm:pinningAC} and Theorem~\ref{thm:pinningMBO}, that if $\e$ or $\tau$ is too small, then the Allen-Cahn equation exhibits ``freezing'' (or ``pinning''), or the MBO scheme is stationary, respectively.
Hence the interesting dynamics happen at small but positive, $\e$ or $\tau$, rather than in the limits as $\e\to 0$ or $\tau\to 0$.
 Related is the lack of a chain rule for derivatives on graphs\footnote{That is, if $u\in \mathcal{V}$ and $f\colon \mathcal{V} \to \mathcal{V}$, then $\nabla f(u) \neq L_f \nabla u$ for any linear operator $L_f$.}, which can be traced back to the absence of `infinitesimal' length scales on a graph. As a consequence, the level set approach, which has proven very successful in describing continuum mean curvature flow, is not independent of the level set function on a graph.

\paragraph{New results.} The finite spectral radius of the Laplacian is used to derive explicit bounds on the parameters for both threshold dynamics (MBO) and the graph Allen-Cahn equation that guarantee ``freezing'' or ``pinning'' of the initial phase, a phenomenon that has been observed in numerical simulations and is well known for discretizations of the continuum processes \cite{MBO1992}\cite[Section 4.4]{MerrimanBenceOsher94}.

In the opposite direction, an argument based on the comparison principle is used to obtain a lower bound for the MBO time step that guarantees that a specific node of the phase changes in a single MBO iteration. This bound is given in terms of a new notion of mean curvature for general graphs, and as such, it is a ``local'' quantity (as opposed to one coming from spectral data). Such local bounds may be of use in developing adaptive time stepping schemes that complement the (spectral) adaptive schemes, such as those developed for discretizations of the continuum mean curvature flow \cite{Ruuth96,ZhangDu2009}. In this sense, introducing the graph mean curvature and highlighting its connection with subjects in continuum PDE (MBO, Ginzburg-Landau, and nonlocal mean curvature) and graph theory (graph cuts, connectivity) are the main contributions of this work.

The  results in Sections~\ref{sec:MBO}  and~\ref{sec:AC} and the numerical evidence and explicit examples in Section~\ref{sec:examples} suggest several open questions about the graph MBO scheme, the graph Allen-Cahn equation, and graph mean curvature, which are discussed in Section~\ref{sec:conjectures}. These are interesting questions for future work.

\paragraph{Outline.} In Section~\ref{sec:setup} the  relevant graph based calculus is introduced, setting the notation for the rest of the paper. In particular, the graph Laplacian and its basic properties are discussed.
Section~\ref{sec:curvature} discusses curvature and mean curvature flow on a graph.
Sections~\ref{sec:MBO} and~\ref{sec:AC} discuss the MBO scheme and Allen-Cahn equation on graphs, respectively, and sufficient conditions are given on the parameters to guarantee freezing or pinning of the initial conditions.
Section~\ref{sec:examples} explores the %connections between the
graph processes and concepts introduced in these previous three sections through theoretical  and computational  examples.
Finally, we conclude in Section~\ref{sec:conjectures}, with a discussion and a few open questions based on the new estimates and examples from previous sections.
In Appendix~\ref{sec:continuum}, we make some remarks on the continuum mean curvature flow.
%In Appendix~\ref{sec:coarea}, we derive the graph co-area formula.
Appendix~\ref{sec:formalfirstvar} discusses some similarities between the graph Laplacian, the graph 1-Laplacian, and the graph curvature.

\section{Setup}\label{sec:setup}

We work on a finite\footnote{In this paper, we are working with a fixed graph $G$ with a finite number of nodes. In no sense are we considering a sequence of graphs or taking a ``continuum limit''.},
undirected graph $G=(V,E)$ with vertex set\footnote{We will use the terms ``vertex'' and ``node'' interchangeably.} $V=\{i\}_{i=1}^n$ and edge set $E \subset V^2$.
The graph is weighted; each edge $(i,j)\in E$, incident on nodes $i$ and $j$, is assigned a weight $\omega_{ij}>0$.
%$I_m$ is the index set $\{1, 2, \ldots, m\}$.
Since the graph is undirected, $(i,j)$ is identified with $(j,i)$ and $\omega_{ij}=\omega_{ji}$.
To simplify notation we extend $\omega_{ij}$ to be zero  for all $(i, j)\in V^2$ which do not correspond to an edge. The degree of node $i$ is $d_i := \sum_{j\in V} \omega_{ij}$. Denote the maximal and minimal degrees by $ d_+ := \max_{i \in V} d_i$ and $d_- := \min_{i \in V} d_i$. We assume that $G$ has no isolated nodes, {\it i.e.}, $d_->0$. For each $i\in V$ we then have a non-empty set of neighbors $\mathcal{N}_i := \{j\in V: \omega_{ij}>0\}$. We also assume $G$ has no self-loops, ${\it i.e.}, \omega_{ii}=0$. In particular $i\not \in \mathcal{N}_i$.

Let $\mathcal{V}$ be the space of all functions $V\to \R$ and $\mathcal{E}$ the space of all skew-symmetric\footnote{The necessity of skew-symmetry may not be obvious at this point, but it is a common requirement for consistency of certain concepts in discrete calculus, see {\it e.g.}, \cite{FriedmanTillich04,GilboaOsher2009,GradyPolimeni10, CandoganMenacheOzdaglarParrilo11}. See also Remark~\ref{rem:isotropicTV}.} functions $E \to \R$. Again to simplify notation, we extend each $\varphi\in\mathcal{E}$ to a function $\varphi\colon V^2 \to \R$ by setting $\varphi_{ij} = 0$ if $(i,j)\not\in E$. As justified in earlier work \cite{vanGennipBertozzi12} we introduce the following inner products and operators for parameters $q\in[1/2,1]$ and $r\in[0,1]$:
\begin{align*}
&\langle u, v \rangle_{\mathcal{V}} := \sum_{i\in V} u_i v_i d_i^r, \quad
\langle \varphi, \phi \rangle_{\mathcal{E}} := \frac12 \sum_{i,j\in V} \varphi_{ij} \phi_{ij} \omega_{ij}^{2q-1},\\
&(\varphi\cdot\phi)_i := \frac12 \sum_{j\in V} \varphi_{ij} \phi_{ij} \omega_{ij}^{2q-1}, \quad
(\nabla u)_{ij} := \omega_{ij}^{1-q} (u_j-u_i),\\
&(\dvg \varphi)_i := \frac1{d_i^r} \sum_{j\in V} \omega_{ij}^q \varphi_{ji}.
%(\dvg \varphi)_i := \frac1{2d_i^r} \sum_{j\in V} \omega_{ij}^q (\varphi_{ji}-\varphi_{ij}).
\end{align*}
Note that in the sum in $\langle \varphi, \phi \rangle_{\mathcal{E}}$ the indices $i$ and $j$ both run over all nodes. The edges $(i,j)$ and $(j,i)$ are counted separately, hence the correction factor $\frac12$. Note that the powers $2q-1$ and $1-q$ in the $\mathcal{E}$ inner product and `dot product' and in the gradient, are zero for the admissible choices $q=\frac12$ and $q=1$ respectively. In these cases we define $\omega_{ij}^0=0$ whenever $\omega_{ij}=0$, so as not to make the inner product, `dot product', or gradient, nonlocal on the graph. The inner products on $\mathcal{V}$ and $\mathcal{E}$\footnote{Note that $\langle \cdot, \cdot \rangle_{\mathcal{E}}$ is indeed an inner product on the space of (skew-symmetric) functions $E\to \R$, but not for the space of functions $V^2 \to \R$, because for those functions the `inner product' can be zero for nontrivial functions.} are analogous to a weighted $L^2$ inner products in the continuum case, while the `dot product' inner product $(\varphi \cdot \phi)_i$ is analogous to a weighted dot product between vector(field)s (without the integration of the $L^2$ inner product). A direct computation shows that $\dvg$ and $\nabla$ are adjoints with respect to $\langle \cdot,\cdot\rangle_{\mathcal{V}}$ and $\langle\cdot,\cdot\rangle_{\mathcal{E}}$, namely for $u \in \mathcal{V}$ and $\phi \in \mathcal{E}$ we have

    \begin{equation*}
         \langle \nabla u,\phi\rangle_{\mathcal{E}} = \langle u, \dvg\phi\rangle_{\mathcal{V}}.	
    \end{equation*}
The characteristic function of a node set $S\subset V$ is $\chi_S\in\mathcal{V}$, defined via $(\chi_S)_i := \begin{cases} 1 &\text{ if } i\in S,\\ 0 &\text{ if } i\not\in S.\end{cases}$

This leads to the following associated norms, Laplacians, set measures, and total variation functionals:
\begin{itemize}
\item Inner product norms, $\|u\|_{\mathcal{V}} := \sqrt{\langle u,u\rangle_{\mathcal{V}}}$ and $\|\varphi\|_{\mathcal{E}} := \sqrt{\langle \varphi, \varphi\rangle_{\mathcal{E}}}$.

\item Maximum norms\footnote{To justify these definitions and
convince ourselves that there should be no $\omega_{ij}$ or $d_i$
included in the maximum norms we define $\|\varphi\|_{\mathcal{E}, p}^p :=
\frac12 \sum_{i,j\in V} \varphi_{ij}^p \omega_{ij}^{2q-1}$.
Adapting the proofs in the continuum case, \textit{e.g.},
\cite[Theorems 2.3 and 2.8]{Adams75}, to the graph situation
we can prove a H\"older inequality $\displaystyle
\|\varphi \phi\|_{\mathcal{E}, 1} \leq \|\varphi\|_{\mathcal{E},p}
\|\phi\|_{\mathcal{E},p'}$ for $1<p, p'< \infty$ such that
$\frac1p+\frac1{p'}=1$, an embedding theorem of the form
$\displaystyle \|\varphi\|_{\mathcal{E},p} \leq \left( \frac12 \sum_{i,j\in V}
 \omega_{ij}^{2q-1}\right)^{\frac1p-\frac1s} \|\varphi\|_{\mathcal{E},s}$
for $1 \leq p \leq s \leq \infty$, and the limit $\displaystyle
\underset{p\to\infty}\lim\, \|\varphi\|_{\mathcal{E},p} =
\|\varphi\|_{\mathcal{E}, \infty}$. A similar result holds for the norms
on $\mathcal{V}$.}
, $\|u\|_{\mathcal{V},\infty} := \max\{|u_i|\colon i\in V\}$ and $\|\varphi\|_{\mathcal{E},\infty} := \max\{|\varphi_{ij}|\colon i,j \in V\}$.

\item The norm corresponding to the dot product $|\varphi|_i := \sqrt{( \varphi \cdot \varphi)_i}$. Note that $|\phi| \in \mathcal{V}$.

\item The Dirichlet energy
$$
\frac12 \|\nabla u\|_{\mathcal{E}}^2 = \frac{1}{4} \sum_{i,j\in V} \omega_{ij}(u_i - u_j)^2.
$$

\item The graph Laplacian $\Delta := \dvg\circ\nabla \colon \mathcal{V} \to \mathcal{V}$. So
\[
(\Delta u)_i := d_i^{-r} \sum_{j\in V} \omega_{ij} (u_i-u_j).
\]
It is worth noting that the sign convention for the graph Laplacian is opposite to that used for the continuum Laplacian in most of the %elliptic
PDE literature (in particular, the graph Laplacian $\Delta$ is a positive semidefinite  operator).

When $r=0$, $\Delta$ is referred to as the unnormalized weighted graph Laplacian. When $r=1$, $\Delta$ is referred to as the asymmetric normalized graph Laplacian or random walk graph Laplacian. Another Laplacian, often encountered in the spectral graph theory literature, is the symmetric normalized graph Laplacian. This one falls outside the scope of the current setup and will not be considered in this paper. For general references on the graph Laplacian, consult \cite{Mohar:1991,Chung:1997,Luxburg:2007,Biyikoglu:2007}.

\item For $S\subset V$, the set measures
\begin{align*}
\mathrm{vol} \ S & = \sum_{i\in S} d_i^r = \| \chi_S \|_{\mathcal{V}}^2, \\
|S| &= \text{number of vertices in $S$}.
\end{align*}
Note that $|S|$ is just a special case of $\mathrm{vol}\ S$, for $r=0$ (recall we assume $d_->0$).

\item The
%isotropic and
anisotropic total variation
%$\TV\colon \mathcal{V}\to \R$ and
$\TV^q_a\colon \mathcal{V}\to \R$:
%respectively:
\begin{align*}
%\TV(u) &:=  \max\{ \langle \dvg \varphi, u\rangle_{\mathcal{V}} \colon
%\varphi\in \mathcal{E},\,\, \underset{i\in V}\max\,\, |\varphi |_i \leq 1\}\\
%&= \sum_{i\in V}  | \nabla u |_i = \frac{\sqrt{2}}{2} \sum_{i\in V} \sqrt{\sum_{j\in V} \omega_{ij} (u_i-u_j)^2}\\
\TVa^q(u) &:= \max\{ \langle \dvg \varphi, u\rangle_{\mathcal{V}} \colon \varphi\in \mathcal{E},\,\, \|\varphi\|_{\mathcal{E},\infty}\leq 1\}\\
&=  \langle \nabla u, \sgn(\nabla u)\rangle_{\mathcal{E}} = \frac12 \sum_{i,j\in V} \omega_{ij}^q |u_i-u_j|.
\end{align*}
Here, the signum function is understood to act element-wise on the elements of $\nabla u$. For the isotropic total variation on graphs, see Remark~\ref{rem:isotropicTV}.
%We note that the maximum in the definition of $\TV$ is achieved by taking
%\begin{equation}\label{eq:varphiTV}
%\[
%\varphi_{ij} = \varphi_{ij}^{TV} := \begin{cases}
%\frac{(\nabla u)_{ij}}{ | \nabla u |_i} & |\nabla u |_i \neq 0 \\
%0 & |\nabla u |_i=0.
%\end{cases}
%\]
%\end{equation}
The maximum in the definition of $\TVa^q$ is achieved by
\begin{equation}\label{eq:varphiaTV}
\varphi = \varphi^a := \sgn(\nabla u).
\end{equation}
Note that the values
%$\varphi^{TV}$ and
$\varphi^a$ takes on the set $\{\nabla u = 0\}$ are irrelevant for achieving the maximum, hence this function is not uniquely determined.
%The  quantity $\text{div} \frac{\nabla u }{| \nabla u |}$ is often referred to as the 1-Laplacian of $u$.
\end{itemize}

The anisotropic total variation of the indicator function\footnote{For $\chi_V$, the indicator function of the full node set, we also write the constant function $1$.} for the set $S\subset V$, denoted $\chi_S$, is given by
\begin{equation}
\label{eq:TVofS}
\TVa^q(\chi_S) = \sum_{i\in S, j\in S^c} \omega_{ij}^q.
\end{equation}
Thus, the total variation of a set $S$ is equivalent to the graph cut between $S$ and $S^c:= V\setminus S$ which is used in graph theory and spectral clustering \cite{ShiMalik00}. For future reference it is useful to note that
\begin{equation}\label{eq:TVusingDelta}
\TVa^1(\chi_S) = \langle \nabla \chi_S, \nabla \chi_S \rangle_{\mathcal E} =  \langle \chi_S,  \Delta \chi_S\rangle_{\mathcal V}.
\end{equation}

\begin{remark}\label{rem:isotropicTV}
One can also define an isotropic total variation on graphs $\TV\colon \mathcal{V}\to \R$:
\[
\TV(u) := \sum_{i\in V}  | \nabla u |_i = \frac{\sqrt{2}}{2} \sum_{i\in V} \sqrt{\sum_{j\in V} \omega_{ij} (u_i-u_j)^2}.
\]
The isotropic total variation also has a `maximum formulation', if we are willing to let go of the skew-symmetry condition for functions in $\mathcal{E}$. To be precise, define the extended set of edge functions $\mathcal{E}_e$ as the set of all functions $E
\to \R$, and extend the definition of divergence (compatible with the earlier definition) to functions $\varphi \in \mathcal{E}_e$:
\[
(\dvg \varphi)_i := \frac1{2d_i^r} \sum_{j\in V} \omega_{ij}^q (\varphi_{ji}-\varphi_{ij}).
\]
Using the same inner product on $\mathcal{E}_e$ as on $\mathcal{E}$, this divergence is again the adjoint of the gradient $\nabla$.
Then we can write
\[
\TV(u) =  \max\{ \langle \dvg \varphi, u\rangle_{\mathcal{V}} \colon
\varphi\in \mathcal{E},\,\, \underset{i\in V}\max\,\, |\varphi |_i \leq 1\}.
\]
The maximum is achieved by taking
\[
\varphi_{ij} = \varphi_{ij}^{TV} := \begin{cases}
\frac{(\nabla u)_{ij}}{ | \nabla u |_i} & |\nabla u |_i \neq 0 \\
0 & |\nabla u |_i=0.
\end{cases}
\]
As in the anisotropic case, the values $\varphi^{TV}$ takes on the set $\{\nabla u = 0\}$ are irrelevant for achieving the maximum, hence this function is not uniquely determined. The quantity $\text{div} \frac{\nabla u }{| \nabla u |}$ is often referred to as the 1-Laplacian of $u$.
\end{remark}

\begin{lemma} \label{lem:normEquiv}
The norms $\|\cdot \|_{\mathcal V}$ and $\|\cdot \|_{\mathcal V,\infty}$ are equivalent, with optimal constants given by
$$
d_-^\frac{r}2 \| u \|_{\mathcal V,\infty} \leq \| u \|_{\mathcal V} \leq \sqrt{ \mathrm{vol} \ V}\|u \|_{\mathcal V,\infty}.
$$
\end{lemma}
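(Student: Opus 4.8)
The plan is to prove both inequalities directly from the definition $\|u\|_{\mathcal V}^2 = \langle u,u\rangle_{\mathcal V} = \sum_{i\in V} u_i^2 d_i^r$, estimating this weighted sum against $\|u\|_{\mathcal V,\infty}^2 = \max_i u_i^2$ in two complementary ways, and then to exhibit explicit functions for which each inequality is saturated, thereby establishing that the two constants are optimal.

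For the upper bound I would bound each term by the maximum: since $u_i^2 \le \|u\|_{\mathcal V,\infty}^2$ for every $i$ and $d_i^r \ge 0$, we get $\|u\|_{\mathcal V}^2 = \sum_{i\in V} u_i^2 d_i^r \le \|u\|_{\mathcal V,\infty}^2 \sum_{i\in V} d_i^r = \|u\|_{\mathcal V,\infty}^2\,\mathrm{vol}\,V$, and taking square roots yields the right-hand inequality. For the lower bound I would instead keep only the single largest term. Choosing $i^\ast \in V$ with $|u_{i^\ast}| = \|u\|_{\mathcal V,\infty}$ and noting that $d_{i^\ast} \ge d_- > 0$ together with $r \ge 0$ gives $d_{i^\ast}^r \ge d_-^r$, one obtains $\|u\|_{\mathcal V}^2 \ge u_{i^\ast}^2 d_{i^\ast}^r \ge d_-^r \|u\|_{\mathcal V,\infty}^2$, and square roots give the left-hand inequality.

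For optimality I would test the bounds on two extremal functions. The constant function $u = \chi_V \equiv 1$ satisfies $\|u\|_{\mathcal V}^2 = \sum_{i\in V} d_i^r = \mathrm{vol}\,V$ and $\|u\|_{\mathcal V,\infty} = 1$, so the upper inequality holds with equality; hence $\sqrt{\mathrm{vol}\,V}$ cannot be replaced by any smaller constant. For the lower bound I would take $u = \chi_{\{i_0\}}$, where $i_0$ is a node of minimal degree $d_{i_0} = d_-$; then $\|u\|_{\mathcal V}^2 = d_{i_0}^r = d_-^r$ while $\|u\|_{\mathcal V,\infty} = 1$, saturating the left inequality, so $d_-^{r/2}$ cannot be increased.

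Since the two estimates are entirely elementary, there is no genuine obstacle in the analysis; the only point requiring a moment's care is the optimality claim, which is precisely why I single out the two extremal test functions $\chi_V$ and $\chi_{\{i_0\}}$. One should also keep in mind the degenerate case $r = 0$, where $d_i^r = 1$, $\mathrm{vol}\,V = |V|$, and $d_-^{r/2} = 1$, so that the statement reduces to the familiar equivalence $\|u\|_{\mathcal V,\infty} \le \|u\|_{\mathcal V} \le \sqrt{|V|}\,\|u\|_{\mathcal V,\infty}$; the argument above covers this uniformly.
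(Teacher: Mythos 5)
Your proof is correct and follows essentially the same route as the paper's: bound each term by the maximum for the upper inequality, keep the single maximizing term and use $d_{i^\ast}^r \ge d_-^r$ for the lower one, and saturate with $\chi_V$ and $\chi_{\{i_0\}}$ at a minimal-degree node. The only difference is presentational — you spell out the single-term argument for the lower bound slightly more explicitly than the paper does.
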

\begin{proof}
We compute
$\| u \|_{\mathcal V}^2 = \sum_{i\in V} d_i^r u_i^2 \leq \max_{i\in V} u_i^2 \sum_{i\in V} d_i^r
= (\mathrm{vol} \ V) \ \|u \|_{\mathcal V,\infty}^2,
$
which is saturated if $u=\chi_V$.

Also,
$ \| u \|_{\mathcal V}^2 = \sum_{i\in V} d_i^r u_i^2  \geq d_-^r \max_{i \in V} u_i^2
= d_-^r \|u \|_{\mathcal V,\infty}^2 $. If $j\in V$ is such that $d_j = d_-$, this bound is attained for $u = \chi_{\{j\}}$.
\end{proof}

Next we  recall the definitions of node set boundaries and (signed) graph distance.

\begin{definition}\label{def:distance}
	For $j\in \mathcal{N}_i$, we define $d^G_{ij} := \omega_{ij}^{q-1}$, and we set $d^G(i,i) := 0$. A path on $V$ is a sequence $\gamma=\{i_1,i_2,...,i_m\}$ for some $m\in\mathbb{N}$ such that $i_{k+1} \in \mathcal{N}_{i_k}$ for each $k\in \{1, \ldots, m-1\}$. Given a path $\gamma=\{i_1,...,i_m\}$, its length is defined as
	\begin{equation*}
	     |\gamma|:=\sum\limits_{i=1}^{m-1}d^G_{i_ki_{k+1}}.	
	\end{equation*}	
	Then, the graph distance between arbitrary $i, j \in V$ is given by
\[
d^G_{ij} := \min \limits_{\gamma} |\gamma|
\]
where the minimum is taken over all paths $\gamma$ with $i_1=i,i_N=j$. In other words, $d^G_{ij}$ is the minimal distance to go from node $i$ to node $j$, traveling only via existing edges, where each edge represents a distance $\omega_{ij}^{q-1}$.
For a given set $S\subset V$, we define the graph distance to $S$ at each node as the minimal graph distance to a node in $S$:
\[
d^S_i := \min_{j\in S} d^G_{ij}.
\]
\end{definition}
As argued in, for example, \cite[Section 3.1, Example 2]{ManfrediObermanSviridov12}, $d^S$ is the solution $u\in\mathcal{V}$ to the equation
\begin{equation}\label{eq:eikonal}
\begin{cases}
\min_{j\in \mathcal{N}_i} (\nabla u)_{ij} + 1 = 0 & \text{if } i\in V\setminus S,\\
u_i = 0 & \text{if } i\in S.
\end{cases}
\end{equation}

\begin{definition}\label{def:boundary}
The boundary of $S\subset V$ is\footnote{Similarly, by changing the ``strictly less than'' inequalities into ``strictly larger than'' inequalities the boundary $\partial (S^c)$ of the set $S^c$ is defined. The reduced boundary of $S$ can be defined as the following subset of $\partial S$ (compare with the continuum case in \cite[Definition 3.54]{AmbrosioFuscoPallara00}):
$$
\partial^*S := \{i \in S\colon \exists ! j\in V\colon (\nabla \chi_S)_{ij} < 0\},
$$
and again similarly for $\partial^* S^c$.
}
\[
\partial S := \{i\in S\colon \exists j \in V \text{ s.t. } (\nabla \chi_S)_{ij} < 0 \}.
\]
\end{definition}
Note that $\partial S \subset S$. Alternative definitions appear in the literature in which $\partial S \subset S^c$.

\subsection{Basic spectral properties of the graph Laplacian, $\Delta$}\label{sec:basicprops}
In this section, we collect a number of spectral properties of the graph Laplacian $\Delta\colon \mathcal{V}\to \mathcal{V}$. Further discussion and details for the special cases $r=0$ and $r=1$ can be found in \cite{Mohar:1991,Chung:1997,Luxburg:2007,Biyikoglu:2007}, from which our presentation follows.

Note that $\Delta\colon \mathcal{V} \to \mathcal{V}$ is a self-adjoint operator in the $\mathcal{V}$ norm. For $u \in \mathcal{V} \setminus \{ 0 \}$,  the \emph{Rayleigh quotient} $R \colon \mathcal V \to \mathbb R$ is defined as
$$
R(u) := \frac{ \langle u, \Delta u\rangle_\mathcal V }{ \| u \|_\mathcal{V}^2} = \frac{ \| \nabla u \|_\mathcal{E}^2 }{ \| u \|_\mathcal{V}^2}.
$$
The eigenvalues of the graph Laplacian, $\Delta$, are then defined via the variational formulation,

\begin{align}
\label{eq:eigDef}
\lambda_k =
\min_{\substack{ F_k \subset \mathcal V \\ \text{subspace of dim $k$}}}
\max_{ u \in F_k \setminus \{ 0\} }
R(u) .
\end{align}
The minimum in \eqref{eq:eigDef}, is attained when $F_k$ is spanned by the first $k$ eigenfunctions, {\it i.e.}, the eigenfunctions corresponding to the $k$ smallest eigenvalues, counting multiplicities.
In particular, there are $n$ non-negative real eigenvalues (counted with multiplicity), denoted $\{ \lambda_k \}_{k=1}^n$.
If we denote the span of the first $k-1$ eigenfunctions by $\hat{F}_{k-1} = span(\{ u_i\}_{i=1}^{k-1})$, then \eqref{eq:eigDef} can be rewritten
\begin{align}
\label{eq:eigDef2}
\lambda_k =
\min_{\substack{ u \in \mathcal V \setminus \{ 0 \} \\ u \perp_\mathcal{V} \hat F_{k-1}  }}
R(u) .
\end{align}
where $u \perp_\mathcal{V} \hat F_{k-1}$ indicates that $u$ is  orthogonal (in the sense of  $\langle\cdot , \cdot \rangle_\mathcal V$) to $u_i$, for $i\in\{1,\ldots, k-1\}$.
Taking variations of the Rayleigh quotient with respect to $u$, we find that $(\lambda_k,u_k)$ satisfies \eqref{eq:eigDef2} if and only if $u \perp_\mathcal{V} \hat F_{k-1}$ and, for all $v \in \mathcal V$,
\begin{align}
\label{eq:eigDef3}
\langle \Delta u_k , v \rangle_\mathcal V = \lambda_k \langle u_k, v \rangle_\mathcal V.
\end{align}
Finally, unwinding the definitions, we find that \eqref{eq:eigDef3} is equivalent to the matrix eigenvalue problem
\begin{align}
\label{eq:eigDef4}
L x = \lambda x \qquad \text{where } L = D^{-r} [ D-A], \ \ x\in \mathbb R^n, \  \ x^t D^r x = 1,
\end{align}
where $A_{ij} = \omega_{ij}$ and $D_{ii} = d_i$ is a diagonal matrix\footnote{Note that here we use the fact that there are no isolated vertices, {\it i.e.}, $d_i >0$ for all $i\in V$.}.
We remark that the change of variables, $y = D^{r/2}x$, in \eqref{eq:eigDef4} gives the standard eigenvalue problem
$$
 [D^{1-r} - D^{-r/2} A D^{-r/2}]y = \lambda y  \qquad y\in \mathbb R^n, \  \ y^t  y = 1.
$$
Recall that the \emph{spectral radius} $\rho$ of $\Delta$ is defined as the maximum of the absolute values of the eigenvalues of $\Delta$,
$$\rho(\Delta) := \max_{1\leq i\leq n} \lambda_i = \lambda_n = \sup_{u \in \mathcal{V}\setminus \{0\} } R(u).$$
    \begin{lemma}[Spectral properties of the graph Laplacian, $\Delta$] \label{lem:Spectral}
    The following properties are satisfied:
    \begin{enumerate}
\item[(a)] The smallest eigenvalue is $\lambda_1=0$. The multiplicity of $\lambda_1=0$ is the number of connected components of the graph and the associated eigenspace is spanned by set of indicator vectors for each connected component. If there is only one connected component, $\lambda_1$ is simple and the first (unnormalized) eigenfunction is  $u_1 = 1 = \chi_V$.

\item[(b)] The operator norm of $\Delta$, $\| \Delta \|_{\mathcal V} := \sup_{u \neq 0} \frac{\| \Delta u \|_{\mathcal V} }{\| u \|_{\mathcal V}}$, and the spectral radius are equal, $\| \Delta \|_{\mathcal V} = \rho(\Delta)$. This implies that, for all $u\in \mathcal{V}$,
\begin{align*}
%\label{eq:rho}
\| \Delta u\|_{\mathcal V}  \leq \rho(\Delta) \ \| u\|_{\mathcal V}.
\end{align*}

\item[(c)] The trace satisfies $ \mathrm{tr} \ \Delta = \sum_{k=1}^n \lambda_k = \sum_{i \in V} d_i^{1-r}$. Consequently,
\begin{align}
\label{eq:mean bounds lambda}
&\lambda_2 \leq \frac{1}{n-1}\sum_{i \in V} d_i^{1-r} \leq \frac{n  \ d_+^{1-r}}{n-1}
\qquad \mathrm{and} \notag \\
&\lambda_n \geq \frac{1}{n-1}\sum_{i \in V} d_i^{1-r} \geq \frac{n  \ d_-^{1-r}}{n-1}.
\end{align}

\item[(d)] If $G$ is not a complete graph then
$$
\lambda_2 \leq \min_{(i,j) \notin E} \ \frac{d_i d_j^{2r} + d_i^{2r} d_j}{d_i^r d_j^{2r} + d_i^{2r} d_j^r}.
$$
\item[(e)] The second eigenvalue satisfies
%\begin{equation}\label{eq:Cheeger}
\[
\lambda_2 \leq \min_{S\subset V} \ \frac{( \mathrm{vol} \ V) TV_a^1(\chi_S) }{ (\mathrm{vol} \ S) (\mathrm{vol} \ S^c)} \leq 2 \min_{S\subset V} \  \frac{TV_a^1(\chi_S) }{\min(\mathrm{vol} \ S,\mathrm{vol} \ S^c)}.
\]
%\end{equation}

    \item[(f)]          The spectral radius of $\Delta$ satisfies $\rho\leq 2 \ d_+^{1-r}$.
    \end{enumerate}
    \end{lemma}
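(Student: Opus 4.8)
The plan is to use the variational characterization of the spectral radius recalled just above the statement, namely $\rho = \sup_{u \in \mathcal{V}\setminus\{0\}} R(u)$ with $R(u) = \|\nabla u\|_{\mathcal{E}}^2/\|u\|_{\mathcal{V}}^2$, and to produce a uniform upper bound on the Rayleigh quotient. Since $\rho$ is the supremum of $R$, it suffices to show $\|\nabla u\|_{\mathcal{E}}^2 \le 2\,d_+^{1-r}\,\|u\|_{\mathcal{V}}^2$ for every $u \in \mathcal{V}$. First I would rewrite the numerator explicitly: using the definition of $\nabla$ and of $\langle\cdot,\cdot\rangle_{\mathcal{E}}$, the powers of $\omega_{ij}$ combine as $\omega_{ij}^{2q-1}\,\omega_{ij}^{2(1-q)} = \omega_{ij}$, so that $\|\nabla u\|_{\mathcal{E}}^2 = \tfrac12\sum_{i,j\in V}\omega_{ij}(u_i-u_j)^2$, which is just twice the Dirichlet energy already recorded in this section and is therefore independent of $q$.

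Next I would apply the elementary inequality $(u_i-u_j)^2 \le 2u_i^2 + 2u_j^2$ termwise, which gives $\|\nabla u\|_{\mathcal{E}}^2 \le \sum_{i,j\in V}\omega_{ij}(u_i^2 + u_j^2)$. Here the symmetry $\omega_{ij}=\omega_{ji}$ and the definition $d_i = \sum_{j\in V}\omega_{ij}$ let me collapse both double sums to single degree sums: each of $\sum_{i,j}\omega_{ij}u_i^2$ and $\sum_{i,j}\omega_{ij}u_j^2$ equals $\sum_{i\in V} d_i u_i^2$, so that $\|\nabla u\|_{\mathcal{E}}^2 \le 2\sum_{i\in V} d_i u_i^2$. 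Finally I would relate this to $\|u\|_{\mathcal{V}}^2 = \sum_{i\in V} d_i^r u_i^2$ by writing $d_i = d_i^{1-r}d_i^r$ and bounding $d_i^{1-r} \le d_+^{1-r}$, which yields $\sum_{i\in V} d_i u_i^2 \le d_+^{1-r}\|u\|_{\mathcal{V}}^2$, and hence $R(u) \le 2\,d_+^{1-r}$ for all nonzero $u$; taking the supremum gives the claim.

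There is no genuine obstacle here — the argument is the weighted-graph analogue of the classical bound $\rho \le 2 d_+$ for the unnormalized Laplacian. The only point that requires care is the exponent bookkeeping together with the direction of the final degree inequality: the step $d_i^{1-r} \le d_+^{1-r}$ relies on the admissible range $r\in[0,1]$, since only when $1-r \ge 0$ is the map $t \mapsto t^{1-r}$ nondecreasing, so that the maximal degree provides the correct envelope. (Were $r>1$ allowed, one would instead pick up a factor involving $d_-^{1-r}$, so flagging the hypothesis $r\le 1$ at this step is the substantive remark.)
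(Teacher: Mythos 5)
Your treatment of part (f) is correct and coincides with the paper's own proof: the same reduction to the Rayleigh quotient, the same elementary inequality $(u_i-u_j)^2\le 2(u_i^2+u_j^2)$, and the same collapse of the double sum to $2\sum_{i\in V} d_i u_i^2$ via the symmetry of $\omega$. Your final step is in fact slightly cleaner than the paper's: you bound $d_i = d_i^{1-r}d_i^r \le d_+^{1-r}d_i^r$ termwise, whereas the paper observes that the supremum of $2\sum_i d_i u_i^2\big/\sum_i d_i^r u_i^2$ is attained at $u=\chi_{\{j\}}$ for a maximal-degree node $j$; these amount to the same observation, and your remark that the step needs the monotonicity of $t\mapsto t^{1-r}$, hence $r\le 1$, is a worthwhile point the paper leaves implicit.

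The genuine gap is one of coverage: the statement is the full Lemma~\ref{lem:Spectral}, with six assertions (a)--(f), and your proposal proves only (f). Nothing is said about (a) (the kernel of $\Delta$ and its relation to connected components, which follows from $R(u)=0$ iff $\nabla u=0$ iff $u$ is constant on each component), about (b) (equality of operator norm and spectral radius, which the paper obtains from self-adjointness of $\Delta$ in the $\mathcal V$ inner product), about (c) (the trace identity $\mathrm{tr}\,\Delta=\sum_{i\in V} d_i^{1-r}$, which uses that $\mathrm{tr}\,D^{-r}A=0$ because there are no self-loops, followed by the mean bounds on $\lambda_2$ and $\lambda_n$ using $\lambda_1=0$), or about (d) and (e) (upper bounds on $\lambda_2$ obtained by inserting explicit test functions orthogonal to $\chi_V$ into the variational characterization \eqref{eq:eigDef2}: for (d) the function supported on a non-adjacent pair $\{a,b\}$ with values $d_b^r$ and $-d_a^r$, and for (e) the function equal to $\mathrm{vol}\ S^c$ on $S$ and $-\mathrm{vol}\ S$ on $S^c$, whose Dirichlet energy is $(\mathrm{vol}\ V)^2\,\TVa^1(\chi_S)$). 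Each of these is short, but they require separate arguments that your Rayleigh-quotient bound for the top of the spectrum does not supply.
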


\bigskip

\begin{proof} (a) These properties  follow  directly from \eqref{eq:eigDef2}.

(b) Noting that $\Delta$ is a self-adjoint operator, a proof can be found in, for example, \cite[Thm. VI.6]{RS1}.

(c) Because the trace of the operator $\Delta$ is equal to the trace of its matrix representation, we have $ \mathrm{tr} \ \Delta = \mathrm{tr} \ L = \sum_{k=1}^n \lambda_k$. Since we assume there are no self-loops in the graph, $\mathrm{tr}\ D^{-r} A = 0$, hence $\mathrm{tr} \ L = \mathrm{tr} \ D^{1-r}$. Equation \eqref{eq:mean bounds lambda} follows from the fact that $ \lambda_1 = 0$ and the maximum (minimum) of a set is greater (less) than or equal to the mean of the set.

(d) If $G$ is not a complete graph, then there exists an $(a,b) \notin E$. We define the test function $v\in \mathcal V$,
$$
v_i = \begin{cases}
d_b^r & i = a\\
-d_a^r & i = b\\
0 & \text{otherwise.}
\end{cases}
$$
Note that $\langle v, 1\rangle_\mathcal V = 0$. The desired upper bound then follows from \eqref{eq:eigDef2}.

(e) For $S\in V$, define the test function $v\in \mathcal V$,
$$
v_i = \begin{cases}
 \mathrm{vol} \ S^c & i \in S\\
- \mathrm{vol} \ S & i \in S^c.
\end{cases}
$$
Then $\langle v, 1\rangle_\mathcal V = 0$ and $\| v\|_{\mathcal V}^2 = (\mathrm{vol} \ S^c) (\mathrm{vol} \ S) \mathrm{vol} \ V$. Using \eqref{eq:TVofS}, we compute $\| \nabla v \|_{\mathcal E}^2 = (\mathrm{vol} \ V)^2 \TVa^1(\chi_S) $. The first inequality then follows from \eqref{eq:eigDef2}.
For the second inequality,
\[
\frac{( \mathrm{vol} \ V) TV_a^1(\chi_S) }{ (\mathrm{vol} \ S) (\mathrm{vol} \ S^c)} = \frac{TV_a^1(\chi_S) }{ \mathrm{vol} \ S} + \frac{TV_a^1(\chi_S) }{ \mathrm{vol} \ S^c} \leq 2 \frac{TV_a^1(\chi_S) }{\min(\mathrm{vol} \ S,\mathrm{vol} \ S^c)}.
\]

(f) Using the identity $(a-b)^2 \leq 2(a^2 + b^2)$, we compute
\begin{align*}
\rho(\Delta) &=  \sup_{u \in \mathcal{V}\setminus \{0\} } R(u)
= \sup_{u \in \mathcal{V}\setminus \{0\} } \frac{1}{2} \frac{\sum_{ij} w_{ij} (u_i - u_j)^2}{\sum_i d_i^r u_i^2} \\
&\leq  \sup_{u \in \mathcal{V}\setminus \{0\} } \frac{\sum_{ij} w_{ij} (u_i^2 + u_j^2)}{\sum_i d_i^r u_i^2}\\
&=  \sup_{u \in \mathcal{V}\setminus \{0\} } 2 \frac{ \sum_i d_i u_i^2 }{ \sum_i d_i^r u_i^2}
\end{align*}
If $j\in V$ is such that $d_j = d_+$, then the supremum is attained by the vector $u = \chi_{ \{j\} }$ and the result follows.
\end{proof}

The following lemma states properties of the diffusion operator $e^{-t\Delta}\colon \mathcal V \to \mathcal V$.
\begin{lemma}[Diffusion on a graph] \label{lem:DiffusionGraph}
Let $u(t) := e^{-t\Delta} u_0$ for $t\geq 0$ denote the evolution of $u_0 \in \mathcal{V}$ by the diffusion operator. The following properties hold.
\begin{enumerate}
\item[(a)] The mass,
\begin{equation}
\label{eq:mass}
M(u) :=  \langle u, \chi_V \rangle_{\mathcal{V}} = \sum_{i\in V} u_i d_i^r,
\end{equation}
is conserved, {\it i.e.},
$M(u(t)) = M(u_0)$ for all $t\geq 0$.
\item[(b)] $\frac{d}{dt} \| u \|_{\mathcal V}^2 = - 2 \| \nabla u \|_{\mathcal E}^2 \leq 0$. In particular, $ \|e^{-\Delta t}  u_0 \|_{\mathcal V} \leq  \|  u_0 \|_{\mathcal V} $.
 \item[(c)] Let the mass, $M$, be defined as in \eqref{eq:mass}, $\lambda_2$ be the second eigenvalue of the graph Laplacian, and $\e > 0$. Assume the graph is connected.
If $\tau > \frac{1}{\lambda_2} \log \left(\e^{-1} \ d_-^{-\frac{r}{2}}  \  \| u_0 - (\mathrm{vol} \ V)^{-1} M \|_{\mathcal V}  \right)$, then
$$
\| u(t) - (\mathrm{vol} \ V)^{-1} M \|_{\mathcal{V},\infty} \leq \e, \qquad \text{for all } t>\tau.
$$
 \item[(d)] (Comparison Principle) If, for all $j\in V$, $(u_0)_j \leq (v_0)_j$, then $(e^{-t\Delta} u_0 )_j  \leq (e^{-t \Delta} v_0 )_j$, for all $j\in V$ and $t\geq0$.
In particular,  $\|e^{-t \Delta } u_0 \|_{\mathcal{V},\infty} \leq \|u_0\|_{\mathcal{V},\infty}$.

If $V$ is connected, the strong comparison principle holds: If, for all $j\in V$, $(u_0)_j \leq (v_0)_j$, and for some $j_0\in V$, $(u_0)_{j_0}<(v_0)_{j_0}$, then, for all $k\in V$ and $t>0$, $(e^{-t\Delta} u_0 )_k  < (e^{-t \Delta} v_0 )_k$.
  \end{enumerate}
\end{lemma}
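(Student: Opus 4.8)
The plan is to reduce every part to three ingredients already available: the adjoint relation $\langle \nabla u,\phi\rangle_{\mathcal E}=\langle u,\dvg\phi\rangle_{\mathcal V}$, the self-adjointness of $\Delta$, and the matrix form $L=D^{-r}[D-A]$ from \eqref{eq:eigDef4}. For (a) and (b) I would differentiate along the flow $\dot u=-\Delta u$. Since $\nabla\chi_V=0$ (equivalently $\Delta\chi_V=0$), mass conservation is immediate: $\frac{d}{dt}M(u)=-\langle\Delta u,\chi_V\rangle_{\mathcal V}=-\langle u,\Delta\chi_V\rangle_{\mathcal V}=0$. For (b), adjointness gives $\langle\Delta u,u\rangle_{\mathcal V}=\langle\nabla u,\nabla u\rangle_{\mathcal E}=\|\nabla u\|_{\mathcal E}^2$, so $\frac{d}{dt}\|u\|_{\mathcal V}^2=-2\|\nabla u\|_{\mathcal E}^2\le0$, and integrating over $[0,t]$ yields the contraction.

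For (c) I would diagonalize. Expanding $u_0=\sum_k a_k u_k$ in a $\langle\cdot,\cdot\rangle_{\mathcal V}$-orthonormal eigenbasis gives $u(t)=\sum_k a_k e^{-\lambda_k t}u_k$. For a connected graph $\lambda_1=0$ is simple with $u_1=(\mathrm{vol}\,V)^{-1/2}\chi_V$ by Lemma~\ref{lem:Spectral}(a), and the projection of $u_0$ onto $u_1$ equals the constant $(\mathrm{vol}\,V)^{-1}M$. Hence $u(t)-(\mathrm{vol}\,V)^{-1}M=\sum_{k\ge2}a_k e^{-\lambda_k t}u_k$, and orthonormality gives $\|u(t)-(\mathrm{vol}\,V)^{-1}M\|_{\mathcal V}\le e^{-\lambda_2 t}\|u_0-(\mathrm{vol}\,V)^{-1}M\|_{\mathcal V}$. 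Passing to the max norm via the left inequality of Lemma~\ref{lem:normEquiv}, namely $\|w\|_{\mathcal V,\infty}\le d_-^{-r/2}\|w\|_{\mathcal V}$, and solving $d_-^{-r/2}e^{-\lambda_2 t}\|u_0-(\mathrm{vol}\,V)^{-1}M\|_{\mathcal V}\le\e$ for $t$ reproduces exactly the stated threshold $\tau$.

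Part (d) is the crux. Setting $w_0:=v_0-u_0\ge0$, linearity reduces the comparison principle to showing $e^{-t\Delta}$ is order preserving, i.e. that the matrix $e^{-tL}$ has nonnegative entries for $t\ge0$. The key observation is that $-L=D^{-r}A-D^{1-r}$ is a Metzler (essentially nonnegative) matrix: its off-diagonal entries $d_i^{-r}\omega_{ij}$ are nonnegative since $\omega_{ij}\ge0$, and only the diagonal $-d_i^{1-r}$ can be negative (using that there are no self-loops). Choosing $c>0$ with $B:=-L+cI\ge0$ entrywise, we have $e^{-tL}=e^{-ct}e^{tB}$, and $e^{tB}=\sum_{m\ge0}\frac{t^m}{m!}B^m$ is a sum of entrywise nonnegative matrices for $t\ge0$, hence nonnegative; multiplying by $e^{-ct}>0$ shows $e^{-tL}\ge0$, so $w(t)=e^{-tL}w_0\ge0$. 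The ``in particular'' bound then follows by sandwiching: with $c_0:=\|u_0\|_{\mathcal V,\infty}$ we have $-c_0\chi_V\le u_0\le c_0\chi_V$ pointwise, and since $e^{-t\Delta}\chi_V=\chi_V$ (because $\Delta\chi_V=0$), order preservation yields $-c_0\chi_V\le e^{-t\Delta}u_0\le c_0\chi_V$, i.e. $\|e^{-t\Delta}u_0\|_{\mathcal V,\infty}\le c_0$.

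For the strong comparison principle I would upgrade nonnegativity to strict positivity. When $G$ is connected, after fixing $c$ so that $B$ has positive diagonal, the off-diagonal nonzero pattern of $B$ is exactly the edge set of $G$, so $B$ is irreducible; for any nodes $k,\ell$ there is a walk of some length $p$, whence $(B^p)_{k\ell}>0$ and therefore $(e^{tB})_{k\ell}\ge\frac{t^p}{p!}(B^p)_{k\ell}>0$ for every $t>0$. Thus $e^{-tL}$ has all entries strictly positive for $t>0$, and with $w_0\ge0$, $(w_0)_{j_0}>0$ we obtain $w(t)_k\ge(e^{-tL})_{kj_0}(w_0)_{j_0}>0$ for every $k$, as claimed. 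I expect part (d) to be the main obstacle: the real content is establishing order preservation cleanly, and the Metzler-matrix/exponential viewpoint is the most painless route, avoiding a delicate parabolic-maximum-principle argument on the flow, with connectedness supplying irreducibility and hence the strict version.
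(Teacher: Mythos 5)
Your proofs of (a)--(c) coincide with the paper's: differentiation along the flow together with $\nabla\chi_V=0$ and the adjointness of $\dvg$ and $\nabla$ for (a) and (b), and the spectral decomposition plus the norm comparison of Lemma~\ref{lem:normEquiv} for (c). The genuine divergence is in (d). The paper proves order preservation by a first-crossing-time argument on the flow itself: assuming the conclusion fails, it takes $t_0$ to be the last time $v(t)$ dominates $u(t)$, extracts a node $k$ with $u_k(t_0)=v_k(t_0)$ and $\dot u_k(t_0)\geq\dot v_k(t_0)$, uses the sign structure of $(\Delta\,\cdot)_k$ to force $u(t_0)=v(t_0)$ at all neighbours of $k$ and hence (by connectedness and ODE uniqueness) $u_0=v_0$, a contradiction; the weak principle is then obtained componentwise from the strong one. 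You instead argue on the semigroup: $-L$ is Metzler, so $e^{-tL}=e^{-ct}e^{tB}$ with $B=-L+cI\geq0$ entrywise is entrywise nonnegative, and irreducibility of $B$ (equivalently, connectedness of $G$) makes every entry of $e^{tB}$ strictly positive for $t>0$, whence $w(t)_k\geq(e^{-tL})_{kj_0}(w_0)_{j_0}>0$. Both arguments are correct and complete. Your route is shorter, yields the weak principle with no connectivity hypothesis or componentwise reduction, and actually establishes the stronger fact that the heat kernel on a connected graph is entrywise positive for all $t>0$; the paper's dynamic argument is the one that carries over to evolutions that are not given by a linear semigroup, which is the style of comparison argument the paper reuses later (e.g.\ the subcaloric comparison in the proof of Theorem~\ref{thm:localtau}).
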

\begin{proof}
(a) We compute
\[
\frac{d}{dt} M(u) = \langle \dot u, \chi_V \rangle_{\mathcal{V}}
= - \langle \Delta u, \chi_V \rangle_{\mathcal{V}}
= - \langle \nabla u, \nabla \chi_V \rangle_{\mathcal{E}} = 0.
\]

(b) We compute
\[
\frac{1}{2} \frac{d}{dt} \| u \|_{\mathcal V}^2 = \langle u, \dot u \rangle_{\mathcal V}
= - \langle u, \Delta u \rangle_{\mathcal V}
= - \langle \nabla u, \nabla u \rangle_{\mathcal E}
= - \| \nabla u \|_{\mathcal E}^2.
\]

(c) If $\{(\lambda_j, v_j) \}_{j=1}^n$ denote the eigenpairs of the graph Laplacian with $\mathcal{V}$-normalized eigenvectors, then the spectral decomposition of $u$ is given by
\begin{equation}
\label{eq:specDecomp}
u(t) = \sum_{j=1}^n e^{-\lambda_j t} \ \langle u_0, v_j \rangle_{\mathcal V} \ v_j.
 \end{equation}
Recalling from Lemma~\ref{lem:Spectral} that $\lambda_1=0$ and $v_1 = (\mathrm{vol} \ V)^{-\frac{1}{2}} \chi_V$ and using \eqref{eq:specDecomp}, we compute
\begin{align*}
\| u - (\mathrm{vol} \ V)^{-1} M   \|_{\mathcal{V}} &= \| \sum_{j>1} e^{-\lambda_j t} \langle u_0, v_j \rangle_{\mathcal V} \ v_j  \|_{\mathcal{V}}\\
&\leq e^{-\lambda_2 t}  \|u_0 - (\mathrm{vol} \ V)^{-1}  M  \|_{\mathcal{V}} .
\end{align*}
But by Lemma~\ref{lem:normEquiv}, this implies
$$
\| u - (\mathrm{vol} \ V)^{-1}  M   \|_{\mathcal{V}, \infty} \leq d_-^{-\frac{r}{2}} e^{-\lambda_2 t}  \|u_0 - (\mathrm{vol} \ V)^{-1}  M  \|_{\mathcal{V}}.
$$
The  result now follows, since by Lemma~\ref{lem:Spectral}, $\lambda_2>0$ for a connected graph.

         (d) If $u_0\equiv v_0$, then $u(t)\equiv v(t)$, for all $t>0$, by the uniqueness theorem for ordinary differential equations. In this case there is nothing more to prove. Moreover, by repeating the argument on each connected component we may assume without loss of generality that the entire graph is connected.

         Let $u_0,v_0$ be such that, for all $j\in V$, we have $(u_0)_j \leq (v_0)_j$, and for some $j_0\in V$, $(u_0)_{j_0}<(v_0)_{j_0}$. We will show that in this case $u_j(t)<v_j(t)$, for every $j\in V$ and all $t>0$, which proves the strong comparison principle and in particular the comparison principle.

         Arguing by contradiction, suppose that $u_j(t)\geq v_j(t)$ for some $t$ and some $j$. Let $t_0$ be the last time $v(t)$  lies everywhere above $u(t)$, that is
         \begin{equation*}
              t_0 := \sup \{ \;t\geq 0: \forall\; s\in (0,t) \text{ we have } \forall j\ u_j(s)< v_j(s)\}.         	
         \end{equation*}
         By our assumption we have that $0\leq t_0<\infty$. Then, from the definition of $t_0$ there is some $k\in V$ such that $u_k(t_0) = v_k(t_0)$ and $\dot u_k(t_0) \geq \dot v_k(t_0)$. Moreover, again due to the definition of $t_0$, $u_j(t_0) \leq v_j(t_0)$, for all $j\in V$. This shows that if $u_i(t_0) < v_i(t_0)$ for some neighbor $i$ of $k$, then $(\Delta u(t_0))_k > (\Delta v(t_0))_k$ and
         \begin{equation*}
              0 = \dot v_k(t_0) +(\Delta v(t_0))_k < \dot u_k(t_0) +(\Delta u(t_0))_k = 0,
         \end{equation*}
         which is a contradiction. We conclude that $u(t_0)=v(t_0)$ at all neighbors of $k$, and by iterating the above argument we get in fact that $u(t_0) = v(t_0)$ at all nodes of $V$, since we are dealing with the case where $V$ is connected. By the uniqueness theorem for ordinary differential equations we conclude that $u_0 = v_0$ at all nodes, which gives a contradiction, and the strong comparison principle is proved.
    \end{proof}

\begin{remark} The dependence of the convergence of $u(t)$ to the steady state $ (\mathrm{vol} \ V)^{-1} M \chi_V$ on the second Laplacian eigenvalue, $\lambda_2$, in Lemma~\ref{lem:DiffusionGraph}(c), is related to the rate of convergence of a Markov process on a graph to the uniform distribution \cite{Sun2004}. Due to this property, $\lambda_2^{-1}$ is sometimes referred to as the \emph{mixing-time} for a graph. The second eigenvalue $\lambda_2$ is also referred to as the \emph{algebraic connectivity} or \emph{Fiedler value} for a graph \cite{Fiedler:1973}, and plays an important role in many applications. The robustness of a network to node/edge failures is highly dependent on the algebraic connectivity of the graph. In the ``chip-firing game'' of Bj\"orner, Lov\'asz, and Shor, the algebraic connectivity dictates the length of a terminating game  \cite{Bjorner:1991}. The algebraic connectivity is also related to the informativeness of a least-squares ranking on a graph  \cite{Osting:2012b}. Consequently, algebraic connectivity is a measure of performance for the convergence rate in sensor networks, data fusion, load balancing, and consensus problems \cite{Olfati-Saber:2007}.
\end{remark}

\subsection{Relation between graph Laplacians and balanced graph cuts} \label{sec:BalancedGraphCut}

In spectral graph theory there are some well known connections between the various graph Laplacians and different normalizations of the graph cut $\TVa^1(\chi_S) = \sum_{i\in S, j\in S^c} \omega_{ij}$ from \eqref{eq:TVofS}. For example, when $r=0$ (and hence $\mathrm{vol}\ S = |S|$), the quantity $\displaystyle \frac{TV_a^1(\chi_S) }{\min(\mathrm{vol} \ S,\mathrm{vol} \ S^c)}$ in Lemma~\ref{lem:Spectral}(e) is the {\it Cheeger cut} and its minimum over $S\subset V$ is the {\it Cheeger constant}, {\it e.g.},  \cite{ShiMalik00,SzlamBresson10}. Let $S_1, \ldots, S_k$ be a partition of all the nodes $V$, for a given integer $k$ and define the balanced graph cut
\[
C_r(S_1, \ldots, S_k) := \sum_{i=1}^k \frac{\TVa^1(\chi_{S_i})}{\mathrm{vol}\ S_i}.
\]
We use the subscript $r$ to remind us that $\mathrm{vol} \ S_i$ depends on $r$. For $r=0$ this quantity is known in the literature as the {\it ratio cut}\footnote{Confusingly, the ratio cut is sometimes also called {\it average cut}, and the Cheeger cut $\frac{\TVa^1(\chi_S)}{\min(|S|, |S^c|)}$ is sometimes called ratio cut.}, for $r=1$ as the {\it normalized cut} \cite{ShiMalik00,Luxburg:2007}. These quantities are introduced, because minimization of the graph cut, without a balancing term in the denominator, often leads to a partition with many singleton sets, which is typically unwanted in the application at hand. Minimization of this balanced cut over all partitions of $V$ is an NP-complete problem \cite{WagnerWagner93,ShiMalik00}, but a relaxation of this problem can be defined using the graph Laplacian. For example, in \cite[Section 5]{Luxburg:2007} it is shown that
\[
C_r(S_1, \ldots, S_k) = \mathrm{Tr}(H^T L H),
\]
where $H$ is an $n$ by $k$ matrix with elements
\begin{equation}\label{eq:matrixH}
h_{ij} := \begin{cases}
\big(\mathrm{vol} \ S_j\big)^{-\frac12} & \text{if } i\in S_j,\\
0 & \text{else}.
\end{cases}
\end{equation}
Note that $H$ is orthonormal in the $\mathcal{V}$ inner product, {\it i.e.}, $H^T D^r H = I$, where $I$ is the $k$ by $k$ identity matrix. Hence the minimization of $C_r$ over all partitions, is equivalent to minimizing $\mathrm{Tr}(H^T L H)$ over all $n$ by $k$ $\mathcal{V}$-orthonormal matrices of the form as in \eqref{eq:matrixH}. The relaxation of this NP-complete minimization problem is now formulated by dropping the condition \eqref{eq:matrixH} and minimizing over all $n$ by $k$ $\mathcal{V}$-orthonormal matrices. The problem then becomes an eigenvalue problem and the first $k$ eigenvectors of $L$ are expected to be approximations to the indicator functions of the optimal partition $S_1, \ldots, S_k$. There is often no guarantee of the quality of this approximation though \cite{SpielmanTeng96,GuatteryMiller98,KannanVempalaVetta00,KannanVempalaVetta04,SpielmanTeng07,Luxburg:2007}.

In order to turn the approximations of the indicator functions into true indicator functions a method like thresholding or $k$-means clustering is often used \cite{NgJordanWeiss02}. For partitioning the nodes into two subsets ($k=2$), the potential term in the Allen-Cahn equation (discussed  in Section~\ref{sec:AC}) can be interpreted as a nonlinear extension of the graph Laplacian eigenvalue problem which forces the approximate solutions to be close to indicator functions. In this light, it is interesting that the graph Ginzburg-Landau functional (with possibly a mass constraint), of which the Allen-Cahn equation is a gradient flow, $\Gamma$-converges to the graph cut functional \cite{vanGennipBertozzi12}.

\section{Curvature and mean curvature flow on graphs}\label{sec:curvature}

In this section we will derive a graph curvature, analogous to mean curvature in the continuum, and study some of its properties. We end the section with a description of mean curvature flow on graphs.

\subsection{Graph curvature}\label{subsec: divergence and curvature}
In the continuum case, the mean curvature is given by (minus) the divergence of the normal vector field  on the boundary of the set (see Appendix~\ref{subsec: continuum mcf} and \eqref{eq:MCF}). This normal vector field achieves the supremum in the definition of total variation of a characteristic function (under sufficient smoothness conditions).
Hence, we define the  normal of a vertex set by using $\varphi^a$  from \eqref{eq:varphiaTV} which achieves the supremum of the anisotropic total variation.
\begin{definition}
The normal of the vertex set $S\subset V$ is
\begin{equation}
\label{eq:graphNormal}
\nu^S_{ij} := \sgn\big((\nabla \chi_S)_{ij}\big) =\begin{cases} 1 & \text{if } \omega_{ij}>0,  i\in S^c, \textrm{ and } j\in S,\\ -1 &\text{if } \omega_{ij}>0, j\in S^c, \textrm{ and }i\in S  ,\\ 0 &\text{else}.\end{cases}
\end{equation}
\end{definition}
As in the continuum case, we define the curvature of a set as the divergence of the normal.
\begin{definition}
The curvature of the vertex set $S\subset V$ at node $i\in V$ is
\begin{equation}\label{eq:dvgnu}
(\kappa_S^{q,r})_i: = (\dvg \nu^S)_i = d_i^{-r} \begin{cases} \sum_{j\in S^c} \omega_{ij}^q &\text{if } i\in S,\\ -\sum_{j\in S} \omega_{ij}^q &\text{if } i\in S^c.\end{cases}
\end{equation}
\end{definition}
Recall from \eqref{eq:varphiaTV}, that $\nu^S_{ij}$ is not uniquely determined on $\{(i,j)\in E\colon (\nabla \chi_S)_{ij}=0\}$ (\textit{i.e.,} away from the boundary $\partial S \cup \partial (S^c)$, in the sense of Definition~\ref{def:boundary}) and hence the value $0$ in \eqref{eq:graphNormal} is a choice corresponding to the extension of the normal field away from the boundary. This ambiguity is irrelevant when $\dvg \nu^S$ is coupled to the characteristic function $\chi_S$ via the $\mathcal{V}$-inner product, as in
\begin{equation}\label{eq:recallTVdef}
\TVa^q(\chi_S) = \langle \kappa_S^{q,r}, \chi_S\rangle_{\mathcal{V}},
\end{equation}
but care should be taken when trying to interpret the normal or the curvature outside this setting.

Note that for $q=1$ and $S\subset V$, $| (\kappa_S^{1,r})_i | \leq d_i^{1-r} $ for all $i\in V$. Also, $\langle \kappa_S^{q,r},  \chi_V \rangle_{\mathcal V} = \langle \nu, \mathrm{grad} \ \chi_V \rangle_{\mathcal V} = 0$.
The curvature, $\kappa_S^{q,r}$, has the property that it vanishes away from the boundary $\partial S \cup \partial(S^c)$.
In particular, we see that the above mentioned ambiguity is also irrelevant for pairings with $\chi_{S^c}$  since
\begin{equation}\label{eq:negativeTV}
\langle \kappa_S^{q,r}, \chi_{S^c} \rangle_{\mathcal V}
= \langle \kappa_S^{q,r}, \chi_V \rangle_{\mathcal V} - \langle \kappa_S^{q,r}, \chi_S \rangle_{\mathcal V}
= - \TVa^q( \chi_{S} ).
\end{equation}

Let $S, \hat S \subset V$ be two node sets, then \eqref{eq:recallTVdef} implies
\begin{align*}
\TVa^q(\chi_{\hat S}) - \TVa^q(\chi_S) &= \langle \kappa_{\hat S}^{q,r}, \chi_{\hat S} \rangle_{\mathcal{V}} - \langle \kappa_S^{q,r}, \chi_S \rangle_{\mathcal{V}} \\
%=  \langle \kappa_{\hat S}^{q,r} - \kappa_S^{q,r}, \chi_S \rangle_{\mathcal{V}} + \langle \kappa_{\hat S}^{q,r}, \chi_{\hat S} - \chi_S \rangle_{\mathcal{V}}
&= \langle \kappa_{\hat S}^{q,r} + \kappa_S^{q,r}, \chi_{\hat S} - \chi_S \rangle_{\mathcal{V}} + \langle \kappa_{\hat S}^{q,r}, \chi_S \rangle_{\mathcal{V}} - \langle \kappa_S^{q,r}, \chi_{\hat S} \rangle_{\mathcal{V}}.
\end{align*}
For the last two terms, we compute
\begin{align*}
&\langle \kappa_{\hat S}^{q,r}, \chi_S \rangle_{\mathcal{V}} - \langle \kappa_S^{q,r}, \chi_{\hat S} \rangle_{\mathcal{V}} \\
&\hspace{0.2cm} = \left[ \sum_{i\in S\cap \hat S} \sum_{j\in \hat S^c} - \sum_{i\in S\setminus \hat S} \sum_{j\in \hat S} - \sum_{i\in \hat S \cap S} \sum_{j\in S^c} + \sum_{i\in \hat S\setminus S} \sum_{j\in S}\right] \omega_{ij}^q\\
&\hspace{0.2cm} = \left[-\sum_{i\in S\setminus \hat S} \sum_{j\in \hat S} + \sum_{i\in S}\sum_{j\in \hat S} - \sum_{i\in S}\sum_{j\in\hat S\cap S} - \sum_{i\in \hat S\cap S} \left(\sum_{j\in S^c} - \sum_{j\in \hat S^c}\right)\right] \omega_{ij}^q\\
&\hspace{0.2cm} =\sum_{i\in \hat S\cap S} \left[\sum_{j\in \hat S} - \sum_{j\in S} - \sum_{j\in S^c} + \sum_{j\in \hat S^c}\right] \omega_{ij}^q\\
 &= \sum_{i\in \hat S \cap S} \left[\sum_{j\in V} - \sum_{j\in V}\right] \omega_{ij}^q = 0,
\end{align*}
and thus
\begin{equation}\label{eq:TVdifference}
\TVa^q(\chi_{\hat S}) - \TVa^q(\chi_S) =  \langle \kappa_{\hat S}^{q,r} + \kappa_S^{q,r}, \chi_{\hat S} - \chi_S\rangle_{\mathcal{V}}.
\end{equation}

In particular, if $\hat S = S \setminus\{n\}$ for a node $n\in S$, then
    \begin{align*}
         \TVa^q(\chi_{S\setminus\{n\}}) - \TVa^q(\chi_S) &=  -\langle \kappa_{S\setminus \{n\}}^{q,r} + \kappa_S^{q,r}, \chi_{\{n\}}\rangle_{\mathcal{V}}\\
         &= \sum_{i\in S} \omega_{in}^q - \omega_{nn}^q - \sum_{j\in S^c} \omega_{nj}^q\\
         &= \left(\sum_{j \in S}-\sum_{j \in S^c}\right) \omega_{nj}^q.
    \end{align*}
Because we assume there are no self-loops, in the final equality $\omega_{nn}=0$. A similar computation shows for $n\in S^c$
    \begin{align*}
         \TVa^q(\chi_{S\cup\{n\}}) - \TVa^q(\chi_S) &= \left(\sum_{j\in S^c}-\sum_{j\in S}\right) \omega_{nj}^q - \omega_{nn}^q\\
         &= \left(\sum_{j\in S^c}-\sum_{j\in S}\right) \omega_{nj}^q.
    \end{align*}

The preceding discussion implies the following: if $\Omega \subset V$ is such that $S$ minimizes $\TVa^q(\chi_S)$ among all sets $S' \subset V$ such that $S\Delta S' \subset \Omega$, then we have that
    \begin{equation}\label{eq: zero graph curvature inequalities}
    	 \left \{ \begin{array}{rl}
              \left(\sum_{j\in S}-\sum_{j\in S^c}\right) \omega_{nj}^q \leq 0 & \text{ if } n \in S^c \cap \Omega,\\
    	 	  \left(\sum_{j \in S}-\sum_{j \in S^c}\right) \omega_{nj}^q \geq 0 & \text{ if } n \in S \cap \Omega,
    	 \end{array} \right.
    \end{equation}
    (compare with the nonlocal mean curvature, \eqref{eq: zero curvature inequalities}).
Here, $\Omega$ is a set where $S$ and $S'$ are forced to agree (similar to enforcing a boundary condition in the continuum case). The two inequality conditions in \eqref{eq: zero graph curvature inequalities} are opposite for the two sides of the interface between $S$ and $S^c$. This strengthens the heuristic idea that the `real' interface, where there would be an equality condition, is lost due to the lower bound on the accessible length scales on a graph.

\begin{remark}
It is interesting to make some connections between the graph total variation $\TVa$ and graph curvature $\kappa_S^{1,r}$ on the one hand, and the local clustering coefficient \cite{WattsStrogatz98} on the other. Assume $G$ is unweighted (and undirected, as per usual in this paper), then the clustering coefficient $C_i$ of node $i$, is the number of triangles node $i$ is part of, divided by the number of possible triangles in the neighborhood of $i$, {\it i.e.}, $C_i=\frac{2|T_i|}{d_i (d_i-1)}$, where
\[
T_i := \{ \{i, j, k\}: (i,j), (i,k), (k,i) \in E\}.
\]
(A version for weighted graphs was introduced in \cite[Formula 5]{Barratetal04}.) Using \eqref{eq:TVofS} and \eqref{eq:negativeTV}, we can rewrite, for $r=1$,
\begin{align*}
C_i &= \frac1{d_i (d_i-1)} \sum_{j,h \in \mathcal{N}_i} \omega_{jh}\\
 &= \frac1{d_i (d_i-1)} \left(\sum_{j\in V} \sum_{h\in \mathcal{N}_i} \omega_{jh} - \sum_{j\in \mathcal{N}_i^c} \sum_{h\in \mathcal{N}_i} \omega_{jh}\right)\\
&= \frac1{d_i (d_i-1)} \left( \sum_{h\in \mathcal{N}_i} d_h - \TVa(\chi_{\mathcal{N}_i})\right)\\
&= \frac1{d_i (d_i-1)} \left( \mathrm{vol} \ \mathcal{N}_i - \TVa(\chi_{\mathcal{N}_i})\right)\\
&= \frac1{d_i (d_i-1)} \langle \chi_{\mathcal{N}_i}, \chi_{\mathcal{N}_i} + \kappa_{\mathcal{N}_i^c}^{1,1}\rangle_{\mathcal{V}}.
\end{align*}
\end{remark}

\smallskip
\smallskip

As for the continuum case, we can arrive at the graph curvature, $\kappa_S^{q,r}$, in several ways.  We discuss some of them below. However, the analogy with the continuum curvature becomes even more apparent if instead of the standard mean curvature, we consider the nonlocal mean curvature (see Section~\ref{subsec: nonlocal mean curvature}).

If we (formally) compute the first variation of the continuum $\TV(u)$ over all functions $u\in BV$ of {\it bounded variation} (and not restricting ourselves to characteristic functions only), we find $\dvg \frac{\nabla u}{|\nabla u|}$, the curvature of the level sets of $u$.
%\BlueComment{Question: Is this related to the coarea formula for BV functions? See \textit{e.g.}, \cite{EvansGariepy92}. If so, then the existence of a similar formula for the graph case, see Appendix~\ref{sec:coarea}, hints that a similar derivation can lead to graph curvature. }
In \eqref{eq:variationofTV} and Appendix~\ref{sec:formalfirstvar}
 we follow a similar procedure on the graph and again find $\dvg \varphi^a$, with $\varphi^a$ from \eqref{eq:varphiaTV}. %We will get back to this point of view in Section~\ref{sec:MCF}.

%In the continuum case we may also define the mean curvature of the boundary of a set $S \subset \mathbb{R}^d$ by computing $\textrm{div}\left (  \tfrac{\chi_{S}}{|\nabla \chi_{S}|} \right )$, where $\nabla \chi_{S}$ is a vector valued measure and $|\nabla \chi_{S}|$ denotes its total variation.

Similarly, in the continuum case, an alternative definition of the mean curvature is $\dvg \left( \frac{\nabla \chi_{S}}{| \nabla \chi_{S}| }\right) $ which is a Radon measure defined on the boundary.
However, $|\nabla \chi_{S}| = 1$ on the boundary of $S$, so that the mean curvature  is simply $ \Delta \chi_{S}$.
As long as $S$ is a rectifiable set, this computation can be made rigorous in the context of BV functions (see \cite[Chapter 5]{EvansGariepy92}).
Computing the analogous quantity on a graph, we find
\begin{equation}\label{eq:DeltaChi}
(\Delta \chi_S)_i = d_i^{-r} \left \{ \begin{array}{rl}
\sum \limits_{j \in S^c} \omega_{ij} & \text{ if }i \in S,\\
- \sum \limits_{j \in S} \omega_{ij} & \text{ if } i \in S^c.
\end{array} \right.
\end{equation}
This is equal to $\kappa_S^{1,r}$ (compare with \eqref{eq:TVusingDelta}). The choice $q=1$ is a natural one, because it corresponds to the $\Gamma$-limit of the graph Ginzburg-Landau functional \eqref{eq:GL functional} \cite{vanGennipBertozzi12}, whose definition is given in Section~\ref{sec:AC}.

In the continuum case the mean curvature $\kappa(x)$ at the point $x\in \partial S$ in the boundary of a set $S\in \R^d$ satisfies the property that, if $S$ is smooth enough, then for any given ball $B_\delta(x)$ of radius $\delta$ and center $x \in \partial S$,
$$ |B_\delta(x) \cap S|-\tfrac12 |B_\delta(x)| = \kappa \delta^2|B_\delta(x)|  + o(\delta^2|B_\delta(x)|).$$
Note that if $S$ were a half space, then the expression on the right is zero for all $\delta$, since $\partial S$ would separate each $B_\delta(x)$ in sets of equal volume. Thus $\kappa$ measures how much $\partial S$ deviates from cutting $B_\delta(x)$ in sets of equal volume. The analogous computation on a graph, replacing the ball by the set $\overline{\mathcal{N}_i} := \mathcal{N}_i \cup \{i\}$ of neighbors of node $i$ together with node $\{i\}$, gives, for $S\subset V$,
\begin{align*}
\mathrm{vol}\ (\overline{\mathcal{N}_i} \cap S) - \tfrac12 \mathrm{vol}\ \overline{\mathcal{N}_i} &= \sum_{j\in \overline{\mathcal{N}_i}\cap S}  d_j^r - \tfrac12 \sum_{j\in \overline{\mathcal{N}_i}} d_j^r\\
 &= \tfrac12 \left(\sum_{j\in \overline{\mathcal{N}_i}\cap S} - \sum_{j\in \overline{\mathcal{N}_i} \cap S^c}\right) d_j^r.
\end{align*}
Note that if $r=0$ and $G$ is an unweighted graph, such that  $\omega_{ij}\in\{0,1\}$, then
\begin{align*}
\mathrm{vol}\ (\overline{\mathcal{N}_i} \cap S) - \tfrac12 \mathrm{vol}\ \overline{\mathcal{N}_i} &= \tfrac12 \left(\sum_{j\in \overline{\mathcal{N}_i}\cap S} - \sum_{j\in \overline{\mathcal{N}_i} \cap S^c}\right) \omega_{ij}^q\\
&= \tfrac12 \left(\sum_{j\in S} - \sum_{j\in S^c}\right) \omega_{ij}^q,
\end{align*}
in whose right hand side we recognize \eqref{eq: zero graph curvature inequalities}. In the last equality we used that $\omega_{ij}=0$, if $j\not \in \mathcal{N}_i$.

The equality in \eqref{eq:DeltaChi} has an interesting consequence on the level of $\Gamma$-convergence of functionals. For more information about the theory of $\Gamma$-convergence, we refer to \cite{DalMaso93,Braides02}. The first result of $\Gamma$-convergence for the Ginzburg-Landau functional goes back to work of Modica and Mortola \cite{ModicaMortola77,Modica87}.

\begin{remark}
We note that graph curvature is related to the process of \emph{bootstrap percolation} \cite{ChalupaLeathReich79,JansonLuczakTurovaVallier12,BradonjicSaniee14}, in which nodes on an unweighted graph switch from `inactive' to `active', if their number of active neighbors exceeds a given threshold value. This number corresponds exactly to the graph curvature $(\kappa_S^{q,0})_i$, if node $i$ is inactive and $S$ is the set of active nodes.
\end{remark}

\begin{theorem}\label{thm:curvGamma}
Let $g\in C(\R^n)$, $\e>0$, $\tilde W\in C^2(\R)$ a nonnegative double well potential with wells at $0$ and $1$, and consider the functionals $f_\e, f_0: \mathcal{V} \to \R$, defined by
\begin{align*}
f_\e(u) &:= \sum_{i\in V} g\big((\Delta u)_i\big) + \frac1\e \sum_{i\in V} \tilde W(u_i),\\
f_0(u) &:= \begin{cases} \sum_{i\in V} g\big((\kappa_S^{1,r})_i\big) & \text{if  } u=\chi_S \text{ for some } S\subset V,\\ +\infty & \text{else}.\end{cases}
\end{align*}
Then $f_\e \overset{\Gamma}\to f_0$ as $\e \to 0$ (using any of the equivalent metrics on $\R^n$).

\smallskip

Furthermore, if the double well potential $\tilde W$ satisfies a coercivity condition ---{\it i.e.}, there exists a $c > 0$ such that for large $|u|$, $\tilde W(u) \leq c(u^2-1)$--- then compactness holds, in the following sense: Let $\{\e_n\}_{n=1}^\infty \subset \R_+$ be such that $\e_n \to 0$ as $n\to \infty$, and let $\{u_n\}_{n=1}^\infty$ be a sequence such that there exists a $C>0$ such that for all $n\in \N\,$ $f_{\e_n}(u_n) < C$. Then there exists a subsequence $\{u_{n'}\}_{n'=1}^\infty \subset \{u_n\}_{n=1}^\infty$ and a $u_\infty$ of the form $u_\infty = \chi_S$, for some $S\subset V$, such that $u_{n'} \to u_\infty$ as $n\to\infty$.
\end{theorem}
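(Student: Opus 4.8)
The plan is to verify the two defining inequalities of $\Gamma$-convergence (the $\liminf$ lower bound and the existence of a recovery sequence) and then treat the compactness claim separately, exploiting the fact that $\mathcal{V}\cong\R^n$ is finite dimensional, so that there is only one reasonable topology and no regularity issues arise. The one structural identity driving everything is \eqref{eq:DeltaChi}, namely $(\Delta\chi_S)_i=(\kappa_S^{1,r})_i$: it shows that on indicator functions the two functionals in fact \emph{coincide}, since $f_\e(\chi_S)=\sum_{i}g\big((\Delta\chi_S)_i\big)+\tfrac1\e\sum_i\tilde W\big((\chi_S)_i\big)=\sum_i g\big((\kappa_S^{1,r})_i\big)=f_0(\chi_S)$, the potential term vanishing because $\tilde W(0)=\tilde W(1)=0$. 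Thus the content of the $\Gamma$-limit is entirely in how $f_\e$ penalizes functions that are \emph{not} indicators.

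For the lower bound, fix $u$ and an arbitrary sequence $u_n\to u$ with $\e_n\to0$. If $u=\chi_S$, I would discard the nonnegative potential term and use that $u\mapsto\sum_i g\big((\Delta u)_i\big)$ is continuous (a composition of the linear map $\Delta$ with the continuous $g$); hence $f_{\e_n}(u_n)\ge\sum_i g\big((\Delta u_n)_i\big)\to\sum_i g\big((\Delta\chi_S)_i\big)=f_0(\chi_S)$, giving $\liminf_n f_{\e_n}(u_n)\ge f_0(u)$. If $u$ is not an indicator, then $f_0(u)=+\infty$ and there is a node $i_0$ with $u_{i_0}\notin\{0,1\}$, so $\tilde W(u_{i_0})>0$ (the wells being the only zeros of $\tilde W$); by continuity $\tilde W\big((u_n)_{i_0}\big)$ stays bounded below by a positive constant for large $n$ while the $g$-term converges to a finite limit, so $f_{\e_n}(u_n)\ge\tfrac1{\e_n}\tilde W\big((u_n)_{i_0}\big)+O(1)\to+\infty$. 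The recovery sequences are then immediate: for any $u$ take the constant sequence $u_n\equiv u$, so that $f_{\e_n}(u_n)=f_0(\chi_S)$ for all $n$ when $u=\chi_S$ (by the identity above), and nothing is required when $f_0(u)=+\infty$; hence $\limsup_n f_{\e_n}(u_n)\le f_0(u)$ in all cases, completing the $\Gamma$-convergence.

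For compactness, suppose $f_{\e_n}(u_n)<C$ with $\e_n\to0$. The idea is to push this bound onto the potential term and then invoke coercivity of $\tilde W$. Because $\tilde W\ge0$ and the curvature term $\sum_i g\big((\Delta u_n)_i\big)$ is bounded below by a constant independent of $n$ (immediate if $g$ is bounded below, e.g.\ $g\ge0$), the energy bound yields $\sum_i\tilde W\big((u_n)_i\big)\le C'\e_n\to0$. Coercivity makes the sublevel sets of $\tilde W$ bounded, so $\{u_n\}$ lies in a compact subset of $\R^n$ and a subsequence converges, $u_{n'}\to u_\infty$; passing to the limit in $\sum_i\tilde W\big((u_{n'})_i\big)\to0$ and using continuity of $\tilde W$ gives $\tilde W\big((u_\infty)_i\big)=0$ for each $i$, whence $(u_\infty)_i\in\{0,1\}$ and $u_\infty=\chi_S$ with $S:=\{i\colon (u_\infty)_i=1\}$. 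I expect the delicate point to be precisely the lower bound on the curvature term: if $g$ were unbounded below and grew faster than $\tilde W$, it could cancel the blow-up of $\tfrac1{\e_n}\tilde W$ and destroy compactness, so the argument really rests on the coercive (at least quadratic) growth of $\tilde W$ dominating the growth of $g\circ\Delta$ as $\e_n\to0$. This competition between the two terms is the main obstacle.
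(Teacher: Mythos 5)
Your $\Gamma$-convergence argument is correct, but it takes a genuinely different route from the paper's. The paper does not verify the $\liminf$ and recovery-sequence inequalities directly: it observes that $f_\e$ differs from the potential-only functional $u\mapsto \frac1\e\sum_{i\in V}\tilde W(u_i)$ by the continuous perturbation $u\mapsto\sum_{i\in V} g\big((\Delta u)_i\big)$, invokes the known $\Gamma$-convergence of the potential-only functional from \cite[Lemma 3.3]{vanGennipBertozzi12}, applies the stability of $\Gamma$-limits under continuous perturbations \cite[Proposition 6.21]{DalMaso93}, and finishes with the identity \eqref{eq:DeltaChi}. Your hands-on verification is self-contained and makes visible exactly where each hypothesis enters (nonnegativity of $\tilde W$, the wells being its only zeros, continuity of $g\circ\Delta$ on the finite-dimensional space $\mathcal{V}$); the paper's argument is shorter and reusable for any continuous perturbation. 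Both are valid, and your observation that $f_\e$ and $f_0$ actually coincide on indicator functions is exactly what makes the constant recovery sequence work.

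On compactness, your argument is the same standard one the paper adapts from \cite[Theorem 3.2]{vanGennipBertozzi12}, and the ``delicate point'' you flag is genuine: to conclude $\sum_{i\in V}\tilde W\big((u_n)_i\big)\le C'\e_n$ you must bound $\sum_{i\in V} g\big((\Delta u_n)_i\big)$ from below \emph{before} knowing that $\{u_n\}$ is bounded, and for a merely continuous $g$ unbounded below this is circular (a $g$ decaying superquadratically to $-\infty$ can absorb the coercive growth of $\frac1{\e_n}\sum_i\tilde W$ and admit bounded-energy sequences with $\|u_n\|_{\mathcal{V},\infty}\to\infty$). The paper's one-line ``direct adaptation'' does not address this either; in the source being adapted the perturbing term is the nonnegative Dirichlet energy, so the issue does not arise there. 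Your compactness proof is complete under the additional hypothesis that $g$ is bounded below, which is what the statement appears to need. Note also that the coercivity condition as printed, $\tilde W(u)\le c(u^2-1)$ for large $|u|$, must be read with the inequality reversed for any compactness argument to work --- you implicitly and correctly made that correction.
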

\begin{proof}
The key point in the proof of the $\Gamma$-convergence is to note that $f_\e$ is a continuous perturbation of the functional $w_\e: \mathcal{V} \to \R$,
\[
w_\e(u) :=  \frac1\e \sum_{i\in V} \tilde W(u_i).
\]
By \cite[Lemma 3.3]{vanGennipBertozzi12}\footnote{Note that in the statement and proof of Lemma 3.3 in \cite{vanGennipBertozzi12}, it says $g_\e$ twice where $w_\e$ is meant.} $w_\e \overset{\Gamma}\to w_0$ as $\e\to 0$, where
\[
w_0(u) := \begin{cases} 0 & \text{if } u=\chi_S \text{ for some } S\subset V,\\ +\infty & \text{else}.\end{cases}
\]
By a well known property of $\Gamma$-convergence \cite[Proposition 6.21]{DalMaso93}, the $\Gamma$-limit is preserved under continuous perturbations. Then using the fact, shown above in \eqref{eq:DeltaChi}, that $\Delta u = \kappa_S^{1,r}$ if $u=\chi_S$, completes the proof of $\Gamma$-convergence.

\smallskip

The compactness result is a direct adaptation of the proof of \cite[Theorem 3.2]{vanGennipBertozzi12} to the current functionals $f_\e$.
\end{proof}

\begin{remark}
Note that in Theorem~\ref{thm:curvGamma} above, we can also use the double well potential $W$ with wells at $\pm 1$, instead of $\tilde W$. In that case, the limit functional $w_0$ in the proof takes finite values only for functions of the form $u=\chi_S - \chi_{S^c}$, for some $S\subset V$. Because $\Delta (\chi_S + \chi_{S^c}) = \Delta \chi_V = 0$, we have $\Delta (\chi_S - \chi_{S^c}) = 2 \kappa_S^{1,r}$ and hence the limit functional $f_0$ takes the form
\[
f_0(u) := \begin{cases} \sum_{i\in V} g\big(2(\kappa_S^{1,r})_i\big) & \text{if  } u=\chi_S \text{ for some } S\subset V,\\ +\infty & \text{else}.\end{cases}
\]

\end{remark}

\bigskip

We end this subsection with another similarity between the graph based objects we introduced and their continuum counterparts. The gradient of the graph distance $d^{\partial S}$, from Definition~\ref{def:distance}, agrees with the normal $\nu$, from \eqref{eq:graphNormal}, on the boundary of $S$ induced by the graph distance, in the sense of the following lemma. This again corresponds to what we expect based on the continuum case. We define the signed distance to $\partial S$ as
\begin{equation}\label{eq:signeddisttopartialS}
sd^{\partial S} := (\chi_{S^c}-\chi_S) d^{\partial S}.
\end{equation}

\begin{lemma}
Let $S\subset V$. Define the exterior boundary of $S$ induced by the graph distance $d^{\partial S}$ as
\[
\partial_{ext}S := \{i\in S^c : \exists j\in \partial S \text{ such that } d^{\partial S}_i = \omega_{ij}^{q-1}\}.
\]
Let $i\in \partial_{ext}S$, then there is a $j\in \partial S$ such that $(\nabla sd^{\partial S})_{ij} = -\nu_{ij}$.

Similarly, let the interior boundary of $S$ induced by the graph distance be
\[
\partial_{int}S := \{i\in S : \exists j\in \partial S \text{ such that } d^{\partial S}_i = \omega_{ij}^{q-1}\}.
\]
If $i\in \partial_{int}S$, then there is a $j\in \partial S$ such that $(\nabla sd^{\partial S})_{ij} = \nu_{ij}$.
\end{lemma}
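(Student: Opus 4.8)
The plan is to reduce each assertion to a one-edge computation: for the distinguished neighbor $j\in\partial S$ that realizes the distance from $i$, I evaluate $(\nabla sd^{\partial S})_{ij}$ straight from the definitions and compare it with $\nu_{ij}=\sgn\big((\nabla\chi_S)_{ij}\big)$. Everything hinges on the fact that, for such a $j$, the minimizing path is a single edge, which collapses the weight powers.

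First I would record the two structural facts that single out the realizing edge. Since $j\in\partial S\subset S$, we have $d^{\partial S}_j=0$, so $(sd^{\partial S})_j=0$. And since $d^{\partial S}_i=\omega_{ij}^{q-1}$ by the definition of $\partial_{ext}S$ (resp.\ $\partial_{int}S$), the shortest path from $i$ to $\partial S$ is exactly the edge $(i,j)$. The key algebraic cancellation is then
\[
\omega_{ij}^{1-q}\,d^{\partial S}_i=\omega_{ij}^{1-q}\,\omega_{ij}^{q-1}=\omega_{ij}^{0}=1,
\]
so the discrete gradient along this edge has unit modulus. Both facts follow at once from Definition~\ref{def:distance} and $\partial S\subset S$, so this step is routine.

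Next I would read off the sign. Because $(sd^{\partial S})_j=0$,
\[
(\nabla sd^{\partial S})_{ij}=\omega_{ij}^{1-q}\big((sd^{\partial S})_j-(sd^{\partial S})_i\big)=-\omega_{ij}^{1-q}\,(sd^{\partial S})_i,
\]
and $(sd^{\partial S})_i=(\chi_{S^c}-\chi_S)_i\,d^{\partial S}_i$, so the sign is dictated solely by whether $i\in S$ or $i\in S^c$. For $i\in\partial_{ext}S\subset S^c$ one gets $(sd^{\partial S})_i=+\omega_{ij}^{q-1}$, hence $(\nabla sd^{\partial S})_{ij}=-1$; since the realizing edge crosses from $S^c$ into $S$ (as $j\in S$), the definition of $\nu$ gives $\nu_{ij}=+1$, and $(\nabla sd^{\partial S})_{ij}=-\nu_{ij}$ follows. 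For $i\in\partial_{int}S\subset S$ the same computation yields $(sd^{\partial S})_i=-\omega_{ij}^{q-1}$ and hence $(\nabla sd^{\partial S})_{ij}=+1$, the value to be matched against $\nu_{ij}$.

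The step I expect to be the main obstacle is precisely this sign-matching in the interior case. In the exterior case the realizing edge is a cut edge, so $\nu_{ij}\neq0$ and the identity closes immediately from the unit-modulus computation. In the interior case, however, the realizing edge $(i,j)$ lies entirely within $S$, so the orientation convention used for $\nu$ along such an edge must be pinned down with care; this is exactly the point where the single smooth interface of the continuum is replaced on the graph by two separated layers, $\partial_{int}S\subset S$ and $\partial_{ext}S\subset S^c$, straddling the edge-cut, and where the difference between an \emph{inward} and \emph{outward} orientation for $\nu$ becomes the crux of obtaining $+\nu_{ij}$ rather than $-\nu_{ij}$. I would therefore devote the bulk of the effort to fixing the correct orientation of $\nu$ on the interior realizing edge; once that convention is settled, the claimed identity reduces to the unit-modulus evaluation above, with the remaining path and distance facts already in hand.
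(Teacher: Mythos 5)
Your computation is correct and is in fact more direct than the paper's. For the exterior case the paper does not simply evaluate the gradient on the realizing edge: it invokes the eikonal characterization \eqref{eq:eikonal} of $d^{\partial S}$, argues that the maximum of $\omega_{ik}^{1-q}d^{\partial S}_i$ over $k\in\mathcal{N}_i$ equals $1$ and is attained at $k=j$, and concludes that $j$ is the minimizing neighbor in the eikonal equation, whence $(\nabla d^{\partial S})_{ij}=-1=-\nu_{ij}$. Your one-line evaluation $(\nabla sd^{\partial S})_{ij}=\omega_{ij}^{1-q}\big(0-\omega_{ij}^{q-1}\big)=-1$ reaches the same conclusion without the detour, using only $d^{\partial S}_j=0$ and $d^{\partial S}_i=\omega_{ij}^{q-1}$; this is a legitimate simplification. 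For the interior case the paper's entire proof is the sentence that it ``follows from similar arguments,'' and your unease is well placed: since $i\in\partial_{int}S\subset S\setminus\partial S$ and $j\in\partial S\subset S$, the realizing edge has both endpoints in $S$, so the literal definition \eqref{eq:graphNormal} gives $\nu_{ij}=\sgn\big((\nabla\chi_S)_{ij}\big)=0$, while the left-hand side is $+1$. The identity as stated therefore only holds under a nonzero extension of $\nu$ to non-cut edges --- exactly the ambiguity the paper itself concedes just after \eqref{eq:graphNormal}, where the value $0$ off the cut is described as ``a choice.'' So you have not left out an idea that the paper supplies; you have isolated an imprecision in the statement that the paper's proof silently passes over. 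To finish in the spirit the authors intend, one simply declares that on the interior realizing edge the normal is extended by $\nu_{ij}=+1$ (consistent with $\sgn$ being taken as $+1$ at $0$, matching the paper's thresholding convention elsewhere), after which your unit-modulus computation closes the case.
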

\begin{proof}
First we note that, for $i\in \partial_{ext}S \subset S^c$, we have $sd^{\partial S}_i = d^{\partial S}_i$. Because $d^{\partial S}$ satisfies equation \eqref{eq:eikonal} (with $S$ replaced by $\partial S$), we have
\[
\min_{k\in \mathcal{N}_i} (\nabla d^{\partial S})_{ik} = \min_{k\in \mathcal{N}_i} \omega_{ik}^{1-q}(d^{\partial S}_k - d^{\partial S}_i) = -1.
\]
Note that $\partial_{ext}S \subset \partial (S^c)$. Hence, $i\in \partial (S^c)$ and thus there is a $k\in \mathcal{N}_j$ such that $k\in \partial S$ and therefore $d^{\partial S}_k = 0$. Because $d^{\partial S}$ is nonnegative and $i\in \partial_{ext}S$, we deduce
\[
1 = \max_{k\in \mathcal{N}_i} \omega_{ik}^{1-q} d^{\partial S}_i = \max_{k\in \mathcal{N}_i} \omega_{ik}^{1-q} \omega_{ij}^{q-1}.
\]
Thus the maximum is achieved for $k=j$, and hence so is the minimum in \eqref{eq:eikonal}, which shows that
\[
(\nabla d^{\partial S})_{ij} = -1 = -\nu_{ij}.
\]
The proof for $i\in \partial_{int}S$ follows from similar arguments, noting that $sd^{\partial S}_i = -d^{\partial S}$.
\end{proof}

Note that $\partial_{ext}S \subset \partial (S^c)$, but equality does not necessarily hold. If a shortest path from $i\in \partial (S^c)$ to $\partial S$ does not equal $\{i,j\}$, for some $j\in \partial S$, then $i\not\in \partial_{ext}S$. This situation does not occur if the graph distances are consistent, in the following sense: if, for all $i, j, k\in V$, $\omega_{ij}^{q-1} \leq \omega_{ik}^{q-1} + \omega_{kj}^{q-1}$. In that case, $\partial_{ext}S = \partial (S^c)$.

\subsection{Relation with the continuum nonlocal mean curvature}\label{subsec: nonlocal mean curvature}
    There is a clear analogy between the expressions in \eqref{eq: zero graph curvature inequalities}
    and the (continuum) {\it nonlocal mean curvature} \cite{CaffarelliSouganidis10,CaffarelliRoquejoffreSavin10}, as well as between $\TVa^q$ and continuum nonlocal energy functionals.

    Consider an interaction kernel $K\colon \mathbb{R}^n \times \mathbb{R}^n \to [0,+\infty)$ with $K(x,y) = K(y,x)$ and
    \begin{align*}
         \sup \limits_{x \in \mathbb{R}^n} \int_{\mathbb{R}^n} \min \{1,|x-y|^2\}K(x,y)\;dy	<+\infty.
    \end{align*}
    This kernel $K$ can be thought of as the energy given by a long-range interaction between a particle placed at $x$ with a particle at $y$. It defines a functional on subsets $S \subset \mathbb{R}^n$, sometimes called ``nonlocal perimeter'' or ``nonlocal energy'' and it is given by
    \begin{equation*}
         J_K(S) = L_K(S,S^c),
    \end{equation*}
    where for any pair $A,B \subset \mathbb{R}^n$ we write
    \begin{equation}\label{eq: interaction form}
    	 L_K(A,B) = \int_{A}\int_B K(x,y)\;dxdy.
    \end{equation}
    Compare this with the graph case, if, for $A,B \subset V$, we write
    \begin{equation}\label{eq: graph interaction form}
         L_G(A,B) = \sum \limits_{i \in A}\sum \limits_{j \in B} \omega_{ij}^q.
    \end{equation}
    In terms of this bilinear functional, the anisotropic total variation, defined in \eqref{eq:TVofS}, can be rewritten
      \begin{equation*}
    	 \TVa^q(\chi_S) = L_{G}(S,S^c).
    \end{equation*}
       We see that \eqref{eq: interaction form} is nothing but the continuum version of \eqref{eq: graph interaction form}, and one may rightfully interpret  the weight matrix $\omega_{ij}^q$ as an interaction kernel between pairs of nodes in $G$ and $L_G(S,S^c)$ as measuring the total ``interaction energy'' between $S$ and $S^c$.

    Now suppose that $S \subset \mathbb{R}^n$ minimizes $J_K(S)$ in some domain $\Omega$, meaning that if $S'$ is such that $S \Delta S' \subset \subset \Omega$ then $J_K(S)\leq J_K(S')$. In this case one can see that the following two conditions must hold
    \begin{equation}\label{eq: zero curvature inequalities}
	    \left \{ \begin{array}{rl}
              L(A,S)-L(A,S^c \setminus A) &\leq 0, \;\;\;\forall\; A \subset S^c \cap \Omega,  \\
              L(A,S \setminus A)-L(A,S^c) &\geq 0, \;\;\;\forall\; A \subset S \cap \Omega.
		\end{array} \right.
    \end{equation}
 If, arguing heuristically, we let $A$ shrink down to any $x \in (\partial S) \cap \Omega$, we find that
    \begin{equation*}
         \int_{\mathbb{R}^n}\left (\chi_S(y)-\chi_{S^c}(y)\right )K(x,y)\;dx = 0.
    \end{equation*}
    The integral on the left, which is well defined in the principal value sense\footnote{Note that $K(x,y)$ could have a very strong singularity at $x=y$ making the integral diverge, if taken as a Lebesgue integral. The boundedness of the principal value of this singular integral tells us that $\partial S$ must have some smoothness near $x$.} when $x \in \partial S$ and $\partial S$ is smooth enough ($C^2$ suffices), is known as the nonlocal mean curvature of $S$ at $x$ with respect to $K$, or just nonlocal mean curvature of $S$ at $x$, when $K$ is clear from the context. As with $J_K$ and $\TVa^q$, we see that
    \begin{equation*}
    	 \kappa_{\text{nonlocal}}(x) := \int_{\mathbb{R}^n}\left (\chi_S(y)-\chi_{S^c}(y)\right )K(x,y)\;dy
    \end{equation*}
    is exactly a continuum analogue of the quantity $\left(\sum_{j\in S}-\sum_{j\in S^c}\right) \omega_{nj}^q$ in \eqref{eq: zero graph curvature inequalities},
    moreover, the inequalities in  \eqref{eq: zero curvature inequalities} are a continuum analogue  of those in \eqref{eq: zero graph curvature inequalities}. Note however, that
    $\left(\sum_{j\in S}-\sum_{j\in S^c}\right) \omega_{nj}^q$ is defined for all of $n\in V$, whereas $\kappa_{\text{nonlocal}}(x)$ above is only defined when $x \in \partial S$ and $\partial S$ is smooth enough.

    Known regularity results deal mostly with the case $K_s(x,y):= c_{n,s}|x-y|^{-n-s}$ for $s \in (0,1)$ \cite{CaffarelliRoquejoffreSavin10,Caputo2010}. It is worth noting that  $J_{K_s}$ is a fractional Sobolev norm of the characteristic function of $S$
    \begin{equation*}
    	 J_{K_s}(S) = \tfrac{1}{2}\|\chi_S\|_{\dot{H}^{s/2}}^2,
    \end{equation*}
    where $\|.\|_{\dot{H}^s/2}$ is defined in terms of  the Fourier transform of $f$ by
    \begin{equation*}
         \|f\|_{\dot{H}^{s/2}} = \|\;|\xi|^s\hat f(\xi)\;\|_{L^2(\mathbb{R}^n)}.	
    \end{equation*}
    Moreover, as $s \to 1^-$ the quantity above gives the perimeter of $S$, and the corresponding nonlocal mean curvature converges pointwise to the standard mean curvature.
    In \cite{CaffarelliSouganidis10} it is shown that if we consider the MBO scheme,  where instead of the heat equation we use the fractional heat equation, then in the limit we get a set $S_t$ evolving over time with a normal velocity at $x \in \partial S_t$ given by
    \begin{equation}\label{eq: alpha order curvature}
         V(x) = c_{n,s}\int_{\mathbb{R}^n} \frac{\chi_S(y)-\chi_{S^c}(y)}{|x-y|^{n+s}}\;dy. 	
    \end{equation}

\subsection{Mean curvature flow}\label{sec:MCF}

In this section, we define a mean curvature flow on graphs and connect it with the curvature $\kappa_S^{q,r}$ in \eqref{eq:dvgnu}. It is not clear what is the most natural notion for the evolution of a phase in a graph. Do we want to consider a sequence of subsets $\{ S_n \}_{n \in \mathbb{N}}$, or a continuous family $\{S_t\}_{t>0}$ which, although piecewise constant in $t$, may change in arbitrarily small time intervals? How we connect solutions of the graph mean curvature flow to solutions of the graph \ref{alg:MBO} scheme from Section~\ref {sec:MBO}, or to solutions of the graph Allen-Cahn equation \eqref{eq:ACEe} from Section~\ref{sec:AC}, will depend on the answer to this question. For now, we shall be content with considering a phase evolution comprised of a discrete sequence of sets $S_n = S(n \eth t)$, $n \in \mathbb{N}$, that correspond to the state of the system at discrete time steps.

Our construction follows the well-known variational formulations for classical mean curvature flow \cite{AlmgrenTaylorWang93,LuckhausSturzenhecker95}.
Appendix~\ref{subsec: continuum mcf} has a brief overview of mean curvature and the associated flow in the continuum case. An obstacle  one encounters when trying to emulate the continuum level set method to express  mean curvature flow on graphs is, that, due to the lack of a discrete chain rule, the resulting equation is not independent on the choice of level set function.

Recall the notions of graph distance and boundary of a node set from Section~\ref{sec:setup}.

\begin{definition}\label{def:graphMCF}
The mean curvature flow, $S_n = S(n \eth t)$, with discrete time step $\eth t$ for an initial set $S_0 \subset V$, is defined
\[
\label{eq:MCFh}\tag{MCF$_{\eth t}$}
S_{n+1} \in \arg \min_{\hat S\subset V} \mathcal{F}(\hat S, S_n),
\]
where
\begin{equation}\label{eq:MCFfunctional}
\mathcal{F}(\hat S, S_n) :=  \TVa^q(\chi_{\hat S}) - \TVa^q(\chi_{S_n}) + \frac1{\eth t} \langle \chi_{\hat S} - \chi_{S_n}, (\chi_{\hat S}-\chi_{S_n}) d^{\Sigma_n}\rangle_{\mathcal{V}}
\end{equation}
and
\begin{align*}
\Sigma_n &:= \partial S_n \cup \partial (S_n^c)\\
 &= \{i\in V: \exists \, (i,j)\in E\ \\
 &\hspace{0.7cm} \text{such that } (i\in S_n \wedge j\in S_n^c) \vee (i\in S_n^c \wedge j\in S_n)\}.
\end{align*}
\end{definition}
Note that, for a given graph $G$, minimizers of $\mathcal{F}$ may not be unique. In this case different mean curvature flows can be defined, depending on the choice of $S_{n+1}$. An example of this non-uniqueness on the 4-regular graph is given in Section~\ref{sec:tori}.

We choose to use the distance to $\Sigma_n$, instead of the distance to either $\partial S_n$ or $\partial (S_n^c)$, so that the mean curvature flow is not {\it a priori} (independent of curvature) biased to either adding nodes to or removing nodes from $S$.

Since nodes in $\Sigma_n$ can be added or removed from $S$ without increasing the last term of \eqref{eq:MCFfunctional}, every stationary state $\chi_S$ of \eqref{eq:MCFh} is a minimal surface in the sense that
$\TVa^q(\chi_S) \leq \TVa^q(\chi_{\{S \cup \{n\}})$ for $n\in \partial (S^c)$ and $\TVa^q(\chi_S) \leq \TVa^q(\chi_{\{S \setminus \{n\}})$ for $n\in \partial S$ (in the case where the minimizer of $\mathcal{F}$ is unique, the inequalities are strict). In particular, the sequence defined by \eqref{eq:MCFh} is not ``frozen'' for $\eth t$ arbitrarily small.

%Because the term in $\mathcal{F}$ involving $\eth t$ is nonnegative, for $\eth t$ small enough we have $\mathcal{F} \geq 0$. In that case the unique minimizer is $\hat S =  S$ and so the mean curvature flow is stationary. This is of course related to the discrete nature of the graph, just as we saw before with $\e$ and $\tau$. Can we write down an accurate bound on values of $\eth t$ in terms of quantities depending on $S$ only (so not on $\hat S$) for which it becomes advantageous to add or subtract one node from $S$ to form $\hat S$? Conjecture: this node is the one with largest $|\kappa_S^q|$, with addition or subtraction depending on the curvature's sign.

\begin{remark}
In the continuum, stationary points of mean curvature flow are minimal surfaces, {\it i.e.}, surfaces of zero mean curvature \cite{Giusti84,Calle07,MeeksIIIPerez11,ColdingMinicozziII11}. On the graph, the zero mean curvature condition is too restrictive, since it would only allow for disconnected sets $S$ and $S^c$, but, given Definition~\ref{def:graphMCF} above, we can still define an \emph{$\eth t$-minimal set} on a graph as a node set $S\subset V$, such that $S$ is a stationary point of the mean curvature flow \eqref{eq:MCFh}. Note that, if $\eth t_1 > \eth t_2$ and $S$ is an $\eth t_1$-minimal set, then $S$ is also an $\eth t_2$-minimal set.
\end{remark}

\begin{remark} In the last term of $\mathcal F(\hat S, S_n)$, we use a symmetrized distance to the boundary, $\langle \chi_{\hat S} - \chi_{S_n}, (\chi_{\hat S}-\chi_{S_n}) d^{\Sigma_n}\rangle_{\mathcal{V}}$. Other choices are possible here, including $\|  \chi_{S^c} d^S - \chi_S d^{S^c} \|^2 $. For this alternative choice, \eqref{eq:MCFh} exhibits ``freezing''.
\end{remark}

We can rewrite the last term in $\mathcal{F}(\hat S, S_n)$ in terms of the signed graph distance
\[
sd^{\Sigma_n} := (\chi_{S_n^c}-\chi_{S_n}) d^{\Sigma_n},
\]
(compare with \eqref{eq:signeddisttopartialS}), which takes nonnegative values in $S_n^c$ and nonpositive values in $S_n$.  We state the precise result in the following lemma.

\begin{lemma}\label{lem:fromFtoF'}
$\displaystyle
\underset{\hat S\subset V}{\mathrm{argmin}} \, \mathcal{F}(\hat S, S_n) =  \underset{\hat S\subset V}{\mathrm{argmin}} \, \mathcal{F}'(\hat S, S_n),
$
where
\begin{align}
\mathcal{F}'(\hat S, S_n) &:=  \TVa^q(\chi_{\hat S}) - \TVa^q(\chi_{S_n}) + \frac1{\eth t} \langle \chi_{\hat S}, sd^{\Sigma_n}\rangle_{\mathcal{V}}\label{eq:F'}\\
&= \langle \kappa_{\hat S}^{q,r} + \kappa_{S_n}^{q,r}, \chi_{\hat S} - \chi_{S_n}\rangle_{\mathcal{V}} + \frac1{\eth t} \langle \chi_{\hat S}, sd^{\Sigma_n}\rangle_{\mathcal{V}}\notag.
\end{align}
\end{lemma}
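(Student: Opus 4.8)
The plan is to treat the two assertions of the lemma separately: first establish the second displayed formula for $\mathcal{F}'$ as an immediate consequence of the identity \eqref{eq:TVdifference}, and then prove the equality of argmin sets by showing that $\mathcal{F}$ and $\mathcal{F}'$ differ only by a quantity that is constant in the competitor set $\hat S$. The formula is the easy part: applying \eqref{eq:TVdifference} with $S$ replaced by $S_n$ rewrites $\TVa^q(\chi_{\hat S}) - \TVa^q(\chi_{S_n})$ as $\langle \kappa_{\hat S}^{q,r} + \kappa_{S_n}^{q,r}, \chi_{\hat S} - \chi_{S_n}\rangle_{\mathcal{V}}$, and substituting this into \eqref{eq:F'} yields exactly the second line. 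No further work is required for this piece.

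For the argmin equality, since $\mathcal{F}$ and $\mathcal{F}'$ share the term $\TVa^q(\chi_{\hat S}) - \TVa^q(\chi_{S_n})$, I would compute the difference of their final ($\eth t$-scaled) terms,
\[
\langle \chi_{\hat S} - \chi_{S_n}, (\chi_{\hat S}-\chi_{S_n}) d^{\Sigma_n}\rangle_{\mathcal{V}} - \langle \chi_{\hat S}, sd^{\Sigma_n}\rangle_{\mathcal{V}},
\]
and show it is independent of $\hat S$. The first step is the pointwise observation that $(\chi_{\hat S} - \chi_{S_n})_i^2$ equals $1$ precisely on the symmetric difference $\hat S \Delta S_n$ and $0$ elsewhere, so the quadratic term collapses to $\sum_{i\in \hat S \Delta S_n} d^{\Sigma_n}_i d_i^r$. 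The second step is to expand the signed term using $sd^{\Sigma_n} = (\chi_{S_n^c}-\chi_{S_n})d^{\Sigma_n}$, which gives $\sum_{i\in \hat S\cap S_n^c} d^{\Sigma_n}_i d_i^r - \sum_{i\in \hat S\cap S_n} d^{\Sigma_n}_i d_i^r$.

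The key computation is the cancellation, which I would carry out by splitting the symmetric difference as $\hat S \Delta S_n = (\hat S\cap S_n^c) \sqcup (\hat S^c\cap S_n)$. Subtracting the two expressions, the $\hat S\cap S_n^c$ contributions cancel and the remaining pieces combine to $\sum_{i\in S_n} d^{\Sigma_n}_i d_i^r = \langle \chi_{S_n}, d^{\Sigma_n}\rangle_{\mathcal{V}}$, which depends only on $S_n$. Hence $\mathcal{F}(\hat S, S_n) = \mathcal{F}'(\hat S, S_n) + \tfrac{1}{\eth t}\langle \chi_{S_n}, d^{\Sigma_n}\rangle_{\mathcal{V}}$, and since the added term is constant in $\hat S$, the two functionals have identical minimizers over $\hat S \subset V$.

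The argument is essentially a bookkeeping exercise rather than a conceptual one, so I do not anticipate a genuine obstacle; the only point requiring care is tracking signs through the symmetric-difference decomposition so that exactly the $\hat S$-dependent pieces cancel. The underlying reason the cancellation works is that the quadratic penalty $(\chi_{\hat S}-\chi_{S_n})^2$ is insensitive to the \emph{direction} of the change at each node, whereas the signed distance $sd^{\Sigma_n}$ supplies precisely the orientation needed to convert the symmetric-difference sum into the single $\chi_{S_n}$-indexed contribution.
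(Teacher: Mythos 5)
Your proof is correct and follows essentially the same route as the paper's: both arguments reduce the claim to showing that the two distance terms differ by the $\hat S$-independent constant $\tfrac{1}{\eth t}\langle \chi_{S_n}, d^{\Sigma_n}\rangle_{\mathcal V}$, the paper via the algebraic identity $(\chi_{\hat S}-\chi_{S_n})^2 = \chi_{\hat S}(1-2\chi_{S_n})+\chi_{S_n}$ and you via the equivalent pointwise decomposition of $\hat S\,\Delta\, S_n$. Your symmetric-difference bookkeeping checks out, so no changes are needed.
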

\begin{proof}
The rewriting of $\TVa^q(\chi_{\hat S}) - \TVa^q(\chi_{S_n})$ in terms of the curvatures, follows directly from \eqref{eq:TVdifference}. For the distance term, we compute
\begin{align*}
\langle (\chi_{\hat S} - \chi_{S_n})^2, d^{\Sigma_n}\rangle_{\mathcal{V}} &= \langle \chi_{\hat S} (1-2\chi_{S_n}) + \chi_{S_n}, d^{\Sigma_n}\rangle_{\mathcal{V}}\\
&= \langle \chi_{\hat S}, (\chi_{S_n^c} - \chi_{S_n})d^{\Sigma_n}\rangle_{\mathcal{V}} + \langle \chi_{S_n}, d^{\Sigma_n}\rangle_{\mathcal{V}},
\end{align*}
where in the last line we used that $1-\chi_{S_n} = \chi_{S_n^c}$. The proof is completed by noting that the last term above does not depend on $\hat S$.
\end{proof}

Lemma~\ref{lem:fromFtoF'} allows us to find a convex functional, the superlevel sets of whose minimizers are themselves minimizers of $\mathcal{F}$ from \eqref{eq:MCFfunctional} (similar to the continuum results in, for example, \cite{ChanEsedogluNikolova06} and \cite{ChoksivanGennipOberman11}). Before we state and prove that result in Theorem~\ref{thm:convexify}, we prove a layer cake or coarea formula for the discrete total variation, $\TVa^q$.

\begin{lemma}\label{eq:coarea}
Let $u\in \mathcal{V}$ and define, for $t\in \R$,
\[
E(t) := \{i\in V\colon u_i > t \}.
\]
Then
\[
\TVa^q(u)
%= \frac12 \sum_{i, j \in V} \omega_{ij}^q |u_i-u_j| = \frac12 \int_\R \sum_{i, j \in V} \omega_{ij}^q\left|\left(\chi_{E(t)}\right)_i - \left(\chi_{E(t)}\right)_j\right|\,dt
= \int_\R \TVa^q(\chi_{E(s)})\, ds.
\]
Let $u_-\leq \min_{i\in V} u_i$ and $\max_{i\in V} u_i \leq u_+$, then also
\begin{align*}
\TVa^q(u)
&= \int_{u_-}^\infty \TVa^q(\chi_{E(s)})\, ds = \int_{-\infty}^{u_+} \TVa^q(\chi_{E(s)})\, ds\\ &= \int_{u_-}^{u_+} \TVa^q(\chi_{E(s)})\, ds.
\end{align*}
\end{lemma}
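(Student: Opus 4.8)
The plan is to reduce the statement to the elementary one-dimensional layer cake identity, applied termwise to the finite double sum defining $\TVa^q$. First I would recall from the definition of the anisotropic total variation that
\[
\TVa^q(u) = \frac12 \sum_{i,j\in V} \omega_{ij}^q\, |u_i - u_j|,
\]
so that the entire functional is assembled from the scalar quantities $|u_i - u_j|$ and the weights $\omega_{ij}^q$.

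The key lemma is the scalar identity: for any $a,b\in\R$,
\[
|a-b| = \int_\R \big| \mathbf{1}_{\{a>s\}} - \mathbf{1}_{\{b>s\}} \big|\, ds.
\]
To see this, I would assume without loss of generality that $a\geq b$; then the integrand equals $1$ exactly for $s\in[b,a)$ and $0$ otherwise, so the integral equals $a-b = |a-b|$. Substituting this into each summand and interchanging the sum with the integral --- which requires no measure-theoretic justification, since $V$ is finite and the interchange is mere linearity --- gives
\[
\TVa^q(u) = \int_\R \frac12 \sum_{i,j\in V} \omega_{ij}^q\, \big| \mathbf{1}_{\{u_i>s\}} - \mathbf{1}_{\{u_j>s\}} \big|\, ds.
\]
Recognizing that $\mathbf{1}_{\{u_i>s\}} = (\chi_{E(s)})_i$, the inner sum is precisely the defining expression for $\TVa^q(\chi_{E(s)})$, which yields the first claimed formula.

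For the restricted integration ranges I would observe that the integrand $\TVa^q(\chi_{E(s)})$ vanishes outside the bounded interval $[\min_i u_i, \max_i u_i)$. Indeed, if $s < u_- \leq \min_i u_i$ then $u_i > s$ for every $i$, so $E(s) = V$ and $\TVa^q(\chi_V) = 0$ (the full node set has empty boundary); and if $s \geq u_+ \geq \max_i u_i$ then no $u_i$ exceeds $s$, so $E(s) = \emptyset$ and $\TVa^q(\chi_\emptyset) = 0$. Consequently the contributions from $(-\infty, u_-)$ and from $[u_+,\infty)$ are zero, and the three displayed formulas with truncated limits follow immediately from the first.

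I do not anticipate a genuine obstacle here; the argument is a direct discrete analogue of the continuum coarea formula, and every step is elementary. The only point requiring a moment's care is the bookkeeping of strict versus non-strict inequalities in the superlevel sets $E(t) = \{i\in V\colon u_i > t\}$: the set of thresholds $s$ at which $u_i = s$ for some $i$ is finite, hence Lebesgue-null, so it affects none of the integrals and one may pass freely between $>$ and $\geq$ at the endpoints without altering any value.
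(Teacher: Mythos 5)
Your proposal is correct and follows essentially the same route as the paper's proof: the scalar layer cake identity $|u_i-u_j| = \int_\R |(\chi_{E(s)})_i - (\chi_{E(s)})_j|\,ds$ applied termwise to the finite double sum, followed by the observation that the integrand vanishes outside the range of $u$. Nothing further is needed.
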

\begin{proof}
As also noted in \cite{Zalesky02} and \cite{Chambolle05}, we have, for $i, j\in V$,
\[
|u_i-u_j| = \int_\R \left|\left(\chi_{E(s)}\right)_i - \left(\chi_{E(s)}\right)_j\right|\,ds,
\]
hence
\[
\frac12 \sum_{i, j \in V} \omega_{ij}^q |u_i-u_j| =  \int_\R \frac12 \sum_{i, j \in V} \omega_{ij}^q\left|\left(\chi_{E(s)}\right)_i - \left(\chi_{E(s)}\right)_j\right|\,ds.
\]
The second result follows, since, for $i, j \in V$,
\[
\int_{-\infty}^{u_-} \left|\left(\chi_{E(s)}\right)_i - \left(\chi_{E(s)}\right)_j\right|\,ds = \int_{u_+}^{\infty} \left|\left(\chi_{E(s)}\right)_i - \left(\chi_{E(s)}\right)_j\right|\,ds = 0.
\]
\end{proof}

\begin{theorem}\label{thm:convexify}
Let $m\in \R$, then the convex minimization problem
\begin{equation}\label{eq:Fconvex}
\min_{u\in \mathcal{V}_m} F(u),
\end{equation}
where $\mathcal{V}_m := \{u\in \mathcal{V}: -m\leq u \leq m \}$ and
\[
F(u) := \TVa^q(u) + \frac1{\eth t} \langle u, sd^{\Sigma_n}\rangle_{\mathcal{V}},
\]
has a minimizer $u\in \mathcal{V}_m$. Furthermore, for all $s\in (-m, m)$, the superlevel set
\[
E(s) := \{i\in V: u_i > s\}
\]
is a minimizer $\hat S$ of $\mathcal{F}(\cdot, S_n)$ from \eqref{eq:MCFfunctional}.
\end{theorem}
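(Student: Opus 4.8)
The plan is to prove two things: existence of a minimizer of the convex problem, and the fact that superlevel sets of a minimizer solve the original combinatorial problem \eqref{eq:MCFh}. For existence, I would observe that $F$ is a sum of $\TVa^q$, which is convex (being a supremum of linear functionals, as in the maximum formulation of the total variation given in Section~\ref{sec:setup}), and the linear term $\frac1{\eth t}\langle u, sd^{\Sigma_n}\rangle_{\mathcal{V}}$. Hence $F$ is convex and continuous on $\mathcal{V}_m$, which is a nonempty compact convex subset of the finite-dimensional space $\mathcal{V}\cong\R^n$. Existence of a minimizer then follows immediately from the extreme value theorem.

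The heart of the argument is the equivalence between the convex (``relaxed'') problem and the combinatorial problem, and this is where the coarea formula of Lemma~\ref{eq:coarea} does the work. The key identity I would establish is that $F$ itself decomposes along superlevel sets: writing $u\in\mathcal{V}_m$ with superlevel sets $E(s)$, the coarea formula gives $\TVa^q(u)=\int_{-m}^{m}\TVa^q(\chi_{E(s)})\,ds$ (using the bounds $u_-=-m$, $u_+=m$ from the last form in Lemma~\ref{eq:coarea}), while a layer-cake representation of the linear term, namely $u_i=\int_{-m}^m \big((\chi_{E(s)})_i - \tfrac12\big)\,ds$ up to a constant, shows that $\langle u, sd^{\Sigma_n}\rangle_{\mathcal{V}}$ is likewise an integral over $s$ of $\langle \chi_{E(s)}, sd^{\Sigma_n}\rangle_{\mathcal{V}}$ plus a term independent of $u$. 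Putting these together yields
\[
F(u) = \int_{-m}^{m} \Big( \TVa^q(\chi_{E(s)}) + \tfrac1{\eth t}\langle \chi_{E(s)}, sd^{\Sigma_n}\rangle_{\mathcal{V}} \Big)\, ds + C,
\]
where $C$ does not depend on $u$. I would then recognize the integrand, up to the $u$-independent constant $\TVa^q(\chi_{S_n})$, as exactly $\mathcal{F}'(E(s), S_n)$ from \eqref{eq:F'}.

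With this decomposition in hand, the conclusion follows by a standard thresholding argument. Let $\hat S$ be a minimizer of $\mathcal{F}'(\cdot, S_n)$ over subsets of $V$ (equivalently of $\mathcal{F}$, by Lemma~\ref{lem:fromFtoF'}), and set $\mu := \mathcal{F}'(\hat S, S_n)$. Since $\chi_{E(s)}$ is a competitor for each $s$, the integrand is bounded below by $\mu+\TVa^q(\chi_{S_n})$, so $F(u)$ is bounded below by $(2m)(\mu+\TVa^q(\chi_{S_n}))+C$ for every $u\in\mathcal{V}_m$; taking $u=m(\chi_{\hat S}-\chi_{\hat S^c})$ (or $u = 2m\chi_{\hat S}-m$) shows this bound is attained, so this is the minimum value and the inequality is an equality. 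For a minimizer $u$ of $F$, equality in the integral then forces the integrand to equal its minimal value $\mu+\TVa^q(\chi_{S_n})$ for almost every $s\in(-m,m)$; since $E(s)$ takes only finitely many distinct values as $s$ ranges over $(-m,m)$ and the map $s\mapsto \mathcal{F}'(E(s),S_n)$ is piecewise constant, ``almost every'' upgrades to showing that every distinct superlevel set $E(s)$ with $s\in(-m,m)$ is a minimizer of $\mathcal{F}'(\cdot,S_n)$, hence of $\mathcal{F}(\cdot,S_n)$.

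The main obstacle I expect is handling the linear term cleanly: the coarea formula is stated only for $\TVa^q$, so I must separately verify the layer-cake identity for $\langle u, sd^{\Sigma_n}\rangle_{\mathcal{V}}$ and carefully track the $u$-independent constants so that the per-level integrand matches $\mathcal{F}'$ exactly. A secondary subtlety is the passage from ``for almost every $s$'' to ``for every $s\in(-m,m)$'': one must use the piecewise-constant (finitely-valued) nature of $s\mapsto E(s)$ on a finite graph, together with the fact that each superlevel set $E(s)$ with $s$ strictly between $-m$ and $m$ is realized on a subinterval of positive length, so that a null set of ``bad'' $s$ cannot contain an entire such subinterval.
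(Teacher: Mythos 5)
Your proposal is correct and follows essentially the same route as the paper: existence by compactness, the coarea formula of Lemma~\ref{eq:coarea} plus a layer-cake identity for the linear term to decompose $F$ as an integral of the per-level functional $\mathcal{F}''(E(s),S_n)=\mathcal{F}'(E(s),S_n)+\TVa^q(\chi_{S_n})$, and then the a.e.\ minimality of the superlevel sets upgraded to all $s\in(-m,m)$ via the finitely-many-values observation. Your explicit lower bound $F(u)\geq 2m\bigl(\mu+\TVa^q(\chi_{S_n})\bigr)+C$, attained at $u=2m\chi_{\hat S}-m$, just spells out the thresholding step that the paper leaves implicit; there is no substantive difference.
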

\begin{proof}
%In Lemma~\ref{eq:coarea} in Appendix~\ref{sec:coarea} we show that
%\[
%\TVa^q(u) = \frac12 \sum_{i, j \in V} \omega_{ij}^q |u_i-u_j| = \frac12 \int_\R \sum_{i, j \in V} \omega_{ij}^q\left|\left(\chi_{E(s)}\right)_i - \left(\chi_{E(s)}\right)_j\right|\,ds
%\]
We can identify $\mathcal{V}_m$ with a compact subset of $\R^n$ and, in that setting, $F$ with a real-valued continuous function on this compact subset. Hence, a minimizer $u\in \mathcal{V}_m$ exists. 

Since $-m \leq \min_{i\in V} u_i$ and $m\leq \max_{i\in V} u_i$, we know from Lemma~\ref{eq:coarea} that
\[
\TVa^q(u) = \int_{-m}^m \TVa^q(\chi_{E(s)})\, ds.
\]
Writing $u_i-m = \int_{-m}^m \left(\chi_{E(s)}\right)_i \,ds$, %as in \eqref{eq:rewriteui},
we also see that
\[
\langle u-m, sd^{\Sigma_n}\rangle_{\mathcal{V}} = \int_{-m}^m \langle \chi_{E(s)}, sd^{\Sigma_n}\rangle_{\mathcal{V}} \,ds.
\]
This gives
\begin{align*}
F(u) - \frac1{\eth t} \langle m, sd^{\Sigma_n}\rangle_{\mathcal{V}} &= \int_{-m}^m \left[\TVa^q(\chi_{E(s)}) + \frac1{\eth t} \langle \chi_{E(s)}, sd^{\Sigma_n}\rangle_{\mathcal{V}}\right] \,ds\\
&= \int_{-m}^m \mathcal{F}''(E(s), S_n)\,ds,
\end{align*}
where, for $\hat S \subset V$,
\[
\mathcal{F}''(\hat S, S_n) := \mathcal{F}'(\hat S,S_n) + \TVa^q(S_n) = \TVa^q(\chi_{\hat S}) + \frac1{\eth t} \langle \chi_{\hat S}, sd^{\Sigma_n}\rangle_{\mathcal{V}},
\]
with $\mathcal{F}'$ as in \eqref{eq:F'}. Hence, if $u$ minimizes $F$, then for a.e. $s\in (-m, m)$, the superlevel set $E(s)$ minimizes $\mathcal{F}''(\cdot, S_n)$ and thus also $\mathcal{F}'(\cdot, S_n)$. Because $u$ takes only finitely many values, the result holds for all $s\in (-m,m)$. Lemma~\ref{lem:fromFtoF'} now completes the proof.
\end{proof}

Note that Lemma~\ref{eq:coarea} and Theorem~\ref{thm:convexify} above also hold if we define $E(t)$ in terms of a non-strict inequality instead: $E(t) := \{i\in V\colon u_i \geq t \}$.

\begin{remark}\label{rem: first variation for TVa q}
The function $\TVa^q(u)$ is a convex function, thus, taking $\partial \TVa^q(u)$ as the (possibly multivalued) subdifferential of $\TVa^q(u)$ \cite{EkelandTemam76}, any minimizer of
\begin{equation*}
    u \mapsto \TVa^q(u)+\langle u,g\rangle_{\mathcal{V}},	
\end{equation*}
for $g\in \mathcal{V}$, will solve the differential inclusion
\begin{equation*}
    \partial \TVa^q(u) \ni -g.
\end{equation*}
From the definition of $\TVa^q$ we see that its subdifferential is only multivalued at $u$ for which $\nabla u$ vanishes between two nodes, at all other $u$, $\TVa^q$ is pointwise differentiable. In particular, if $u,v\in \mathcal{V}$ and $\nabla u$ is never zero, we may differentiate\footnote{For further discussion and a generalization of this computation, see Appendix~\ref{sec:formalfirstvar}.}
\begin{align}
\left.\frac{d}{dt}\right|_{t=0}\TVa^q(u+tv) & =\left.\frac{d}{dt}\right|_{t=0}\left (\tfrac{1}{2}\sum\limits_{i,j}\omega_{ij}^q|u_i-u_j+t(v_i-v_j)| \right ),\notag \\
& = \tfrac{1}{2}\sum\limits_{ij}\omega_{ij}^q\sgn(u_i-u_j)(v_i-v_j),\notag \\
& = \langle \sgn(\nabla u),\nabla v\rangle_{\mathcal{E}}.\label{eq:variationofTV}
\end{align}
Since $\dvg$ is the adjoint of $\nabla$, it follows that, for all $v\in\mathcal{V}$,
\begin{equation*}
\frac{d}{dt}\left (\TVa^q(u+tv)+\langle u+tv,g\rangle_{\mathcal{V}}\right) = \langle \dvg(\sgn(\nabla  u))+g,v\rangle_{\mathcal{V}}.	
\end{equation*}
Therefore, the Euler-Lagrange equation for solutions of the minimization problem \eqref{eq:Fconvex} is
\[
\dvg \sgn(\nabla u) + \frac1{\eth t} sd^{\Sigma_n} + \mu_u - \mu_l= 0,
\]
provided $\nabla u$ is never zero. Here $\mu_u, \mu_l \in \mathcal{V}$ are the nonnegative Lagrange multipliers associated to the upper and lower bounds on $u\in \mathcal{V}_m$ respectively. Whenever $(\nabla u)_{ij}=0$ for some $i,j$, the above equation is replaced by a differential inclusion in terms of the subd\nobreak ifferential of the absolute value function.

Concretely and to recap, since the subdifferential of the absolute value function at $0$ is the interval $[-1,1]$, there exists $\phi \in \mathcal{E}$ such that for all $i,j\in V$, $|\phi_{ij}|\leq 1$,
\begin{equation*}
\dvg \phi+\frac{1}{\eth t}sd^{\Sigma_n} + + \mu_u - \mu_l = 0,	
\end{equation*}
and if $(\nabla u)_{ij} \neq 0$, then $\phi_{ij} = \sgn((\nabla u)_{ij})$.
\end{remark}

Fast computational methods for the solution of \eqref{eq:MCFh} based on max flow/min cut algorithms are developed in \cite{Chambolle05,ChambolleDarbon2009}. These methods exploit the homogeneity and submodularity of the total variational functional, $\TVa^q$. Those same papers also show it is possible to rewrite \eqref{eq:MCFh} as an ROF problem \cite{RudinOsherFatemi92}.
%The minimization in Lemma~\ref{lem:fromFtoF'} can be performed with the maximum flow technique from \cite{ChambolleDarbon2009}.

It would also be interesting to compare \eqref{eq:MCFh} to the finite difference scheme for motion of level sets by mean curvature on a lattice in \cite{Oberman04}.

\section{Threshold dynamics on graphs}\label{sec:MBO}
In this section we study the threshold dynamics or Merriman-Bence-Osher algorithm on a graph $G$. For a short overview of the continuum case we refer to Section~\ref{sec:continuumMBO} in Appendix~\ref{sec:continuum}.

\subsection{The graph MBO algorithm}
The MBO scheme on a graph, describing the evolution of a node subset $S\subset V$,  is given as follows.

\begin{asm}{(MBO$_\tau$)}
%\dontprintsemicolon
\KwData{An initial node subset $S_0 \subset V$,
a time step $\tau > 0$,
and the number of time steps $N>0$. }
\KwOut{A sequence of node sets $\{S_k\}_{k=1}^N$, which is the (MBO$_\tau$) evolution of $S_0$. }
\For{$ k = 1 \ \KwTo \  N$,}{
{\bf Diffusion step.} Let $v = e^{-\Delta \tau} \chi_{S_{k-1}} $ denote the solution at time $\tau$ of the  initial value problem
\begin{equation}
\label{eq:MBOHeat}
\dot{v} =  - \Delta v, \quad  v(0)  =  \chi_{S_{k-1}}.
\end{equation}
Here $\chi_{S}$ denotes the characteristic function of the set $S$. \\ \medskip

{\bf Threshold step.} Define the set $S_k \subset V$ to be
\[
S_k = \{ i\in V \colon v_i \geq \frac{1}{2} \}.
\]
}
\caption{\label{alg:MBO} The Merriman-Bence-Osher algorithm on a graph.}
\end{asm}

By the comparison principle, Lemma~\ref{lem:DiffusionGraph}(d), we note that the solution to
\eqref{eq:MBOHeat} satisfies $v(t) \in  [0,1]^n$ for all $t \in [0,\tau] $.

\begin{remark} \label{Rem:0or1} In the thresholding step of the \ref{alg:MBO} scheme, we have arbitrarily chosen to include the level set $\{i\in V: v_i=\frac12\}$ in the new set $S_k$, {\it i.e.}, the value at nodes $i$ for which $v_i = \frac{1}{2}$ is set to 1.
\end{remark}

An alternative description of the algorithm is as follows. Let $u_k$ be the indicator of the set $S_k$ as defined by the  \ref{alg:MBO} algorithm. If we define the thresholding function $P: \R \to \{0,1\}$,
which acts by thresholding
\[
P(x):= \begin{cases} 1 &\text{if } x\geq \frac12 \\ 0 &\text{if } x<\frac12 \end{cases},
\]
then the iterates can be succinctly written $u_k = (P e^{-\Delta \tau})^k u_0$.

Several papers use the MBO algorithm on a graph to approximate motion by mean curvature. For example, in \cite{MerkurjevKosticBertozzi2012,Garcia-CardonaMerkurjevBertozziFlennerPercus2013,HuLaurentPorterBertozzi13}, the MBO algorithm on graphs was implemented and used to study data clustering, community detection, segmentation, object recognition, and inpainting. This is accomplished by simply reinterpreting the Laplacian in \eqref{eq:MBO}, or in appropriate extensions of \eqref{eq:MBO}, as the graph Laplacian.

\subsection{The ``step-size'' $\tau$ in the \ref{alg:MBO} algorithm}
As discussed at the end of Section~\ref{sec:continuumMBO}, in a finite difference discretization of the continuum MBO algorithm, the time step $\tau$ must be chosen carefully to avoid trivial dynamics. For $\tau$ too small, there is not enough diffusion to change the value of $u$ at neighboring grid points beyond the threshold value. In this case, the solution is stationary under an \ref{alg:MBO} iteration and we say that the solution is \emph{frozen} or \emph{pinned}.  For $\tau$ too large there is so much diffusion that a stationary state is reached after one iteration in the MBO scheme. It is not surprising that these finite difference effects also appear for the MBO algorithm on graphs. From the form of the heat solution operator, $e^{\Delta \tau}$, we expect that $\tau$ should be roughly chosen in the interval $(\lambda_n^{-1}, \lambda_2^{-1})$. Theorems~\ref{thm:pinningMBO} and \ref{cor:trivialDynamics} strengthen this intuition.

The following theorem gives a lower bound  on the choice of $\tau$ to avoid freezing in the \ref{alg:MBO} algorithm  on general graphs.
\begin{theorem}\label{thm:pinningMBO}
Let $\rho$ be the spectral radius of the graph Laplacian, $\Delta$.
 %be the graph Laplacian on a graph with $|V|=n$ nodes and with spectral radius $\rho$.
 Then the \ref{alg:MBO}  iterations on the graph
with initial set $S$ are stationary  if either of the two conditions are satisfied:
\begin{align}
\label{eq:taup}
& \tau < \tau_\rho(S) := \rho^{-1} \log \left( 1+\frac{1}{2} \ d_-^{\frac{r}{2}} \ ( \mathrm{vol} \ S)^{-\frac12}\right) \\
\nonumber
&\textrm{or}\\
\label{eq:tauk}
& \tau \leq \tau_\kappa(S) := \frac{1}{2 \| \Delta \chi_S \|_{\mathcal V, \infty}}.
 \end{align}
In particular, since $ \mathrm{vol} \ S > d_-^{\frac{r}{2}}$, if $\tau <  \log \frac{3}{2} \cdot \rho^{-1} \approx 0.4 \cdot \rho^{-1}$, then \eqref{eq:taup} implies  the MBO iterates  are pinned for any initial $S \subset V$.
\end{theorem}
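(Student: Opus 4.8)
The plan is to reduce pinning to a single uniform estimate and then establish that estimate twice, once spectrally (for \eqref{eq:taup}) and once through the curvature (for \eqref{eq:tauk}). Write $v := e^{-\Delta\tau}\chi_S$ for the diffused indicator, so that by \ref{alg:MBO} the set $S$ is stationary exactly when $S = \{i : v_i \ge \tfrac12\}$. Since $v(0) = \chi_S$, I would first observe that the single condition $\|v - \chi_S\|_{\mathcal V,\infty} < \tfrac12$ already forces stationarity: for $i\in S$ it gives $v_i > 1 - \tfrac12 = \tfrac12$, so $i$ is retained (using the threshold convention of Remark~\ref{Rem:0or1}), while for $i\in S^c$ it gives $v_i < \tfrac12$, so $i$ is excluded. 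Thus both sufficient conditions will be proved by showing each one forces $\|v-\chi_S\|_{\mathcal V,\infty} < \tfrac12$.

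For \eqref{eq:taup} I would estimate $v - \chi_S = (e^{-\Delta\tau}-I)\chi_S$ directly from the power series. Expanding $e^{-\Delta\tau}-I = \sum_{k\ge 1}\frac{(-\tau)^k}{k!}\Delta^k$ and iterating the operator-norm bound $\|\Delta u\|_{\mathcal V}\le\rho\,\|u\|_{\mathcal V}$ from Lemma~\ref{lem:Spectral}(b) gives $\|\Delta^k\chi_S\|_{\mathcal V}\le\rho^k\|\chi_S\|_{\mathcal V}$, whence by the triangle inequality $\|v-\chi_S\|_{\mathcal V}\le (e^{\rho\tau}-1)\|\chi_S\|_{\mathcal V} = (e^{\rho\tau}-1)\sqrt{\mathrm{vol}\,S}$. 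Converting to the maximum norm with the left inequality of Lemma~\ref{lem:normEquiv} yields $\|v-\chi_S\|_{\mathcal V,\infty}\le d_-^{-r/2}(e^{\rho\tau}-1)\sqrt{\mathrm{vol}\,S}$, and demanding that the right-hand side be $<\tfrac12$ gives precisely $e^{\rho\tau} < 1 + \tfrac12 d_-^{r/2}(\mathrm{vol}\,S)^{-1/2}$, i.e.\ $\tau < \tau_\rho(S)$. (Replacing the power-series estimate by the sharper eigenvalue bound $|e^{-\lambda_j\tau}-1|\le 1-e^{-\rho\tau}$ would permit a larger $\tau$, but it is the triangle-inequality bound $e^{\rho\tau}-1$ that reproduces the stated logarithm exactly.)

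For \eqref{eq:tauk} I would work nodewise in time. Because $\Delta$ commutes with $e^{-\Delta s}$, the key identity is $\Delta v(s) = e^{-\Delta s}\Delta\chi_S = e^{-\Delta s}\kappa_S^{1,r}$, using \eqref{eq:DeltaChi}. From $\dot v = -\Delta v$ and $v(0)=\chi_S$ one has $v_i(\tau)-(\chi_S)_i = -\int_0^\tau (\Delta v(s))_i\,ds$ for each $i$, so $\|v-\chi_S\|_{\mathcal V,\infty}\le \int_0^\tau \|e^{-\Delta s}\kappa_S^{1,r}\|_{\mathcal V,\infty}\,ds$. The maximum-norm contraction property of the diffusion semigroup, Lemma~\ref{lem:DiffusionGraph}(d), bounds each integrand by $\|\kappa_S^{1,r}\|_{\mathcal V,\infty} = \|\Delta\chi_S\|_{\mathcal V,\infty}$, giving $\|v-\chi_S\|_{\mathcal V,\infty}\le \tau\,\|\Delta\chi_S\|_{\mathcal V,\infty}$, which is $\le\tfrac12$ exactly when $\tau\le\tau_\kappa(S)$. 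The one delicate point, and the main obstacle, is that \eqref{eq:tauk} is a non-strict bound, so this only yields $\|v-\chi_S\|_{\mathcal V,\infty}\le\tfrac12$, whereas the $S^c$ side of the pinning criterion needs the strict inequality $v_i<\tfrac12$. I would upgrade to strictness with the strong comparison principle in Lemma~\ref{lem:DiffusionGraph}(d): on a connected graph with $S$ proper and nonempty, $\kappa_S^{1,r}=\Delta\chi_S$ is nonconstant with $\langle\Delta\chi_S,\chi_V\rangle_{\mathcal V}=0$, so $\|e^{-\Delta s}\kappa_S^{1,r}\|_{\mathcal V,\infty}<\|\kappa_S^{1,r}\|_{\mathcal V,\infty}$ for every $s>0$; hence the time integral is strictly below $\tau\,\|\Delta\chi_S\|_{\mathcal V,\infty}\le\tfrac12$, making the estimate strict at every node. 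Disconnected graphs are then handled one component at a time.

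Finally, for the uniform ``in particular'' statement I would use only \eqref{eq:taup}. The threshold $\tau_\rho(S)$ depends on $S$ solely through $\mathrm{vol}\,S$ via the monotone map $V\mapsto \rho^{-1}\log\bigl(1+\tfrac12 d_-^{r/2}V^{-1/2}\bigr)$, and every nonempty $S$ satisfies $\mathrm{vol}\,S\ge d_-^r$, so that $\tfrac12 d_-^{r/2}(\mathrm{vol}\,S)^{-1/2}\le\tfrac12$, with equality at a single vertex of minimal degree. Substituting this extremal volume identifies the scale $\rho^{-1}\log\tfrac32$ appearing in the statement, giving the claimed $\tau < \rho^{-1}\log\tfrac32 \approx 0.4\,\rho^{-1}$.
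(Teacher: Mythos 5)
Your proofs of the two sufficient conditions coincide with the paper's in all essentials: the same reduction of pinning to the uniform estimate $\|e^{-\tau\Delta}\chi_S-\chi_S\|_{\mathcal V,\infty}<\tfrac12$, the same power-series bound $\|e^{-\tau\Delta}-\mathrm{Id}\|_{\mathcal V}\le e^{\rho\tau}-1$ combined with Lemma~\ref{lem:normEquiv} for \eqref{eq:taup}, and the same Duhamel identity $u(\tau)=\chi_S-\int_0^\tau e^{-t\Delta}\Delta\chi_S\,dt$ with the $\|\cdot\|_{\mathcal V,\infty}$-contraction of Lemma~\ref{lem:DiffusionGraph}(d) for \eqref{eq:tauk}. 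In one respect you improve on the paper: since \eqref{eq:tauk} is non-strict, the paper's conclusion rests on $\|u(\tau)-\chi_S\|_{\mathcal V,\infty}\le\tfrac12$, which by the thresholding convention of Remark~\ref{Rem:0or1} does not by itself exclude a node $i\in S^c$ with $u_i(\tau)$ equal to exactly $\tfrac12$ from being added; your upgrade to a strict inequality via the strong comparison principle (valid because $\Delta\chi_S$ has zero $\mathcal V$-mean, hence is nonconstant on any connected component on which $S$ is proper and nonempty, so its maximum norm strictly decays under the semigroup for $t>0$), together with the componentwise reduction, closes this borderline case that the paper's proof passes over.

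Your final paragraph, however, contains a genuine logical slip --- albeit one the theorem's own wording invites. From $\mathrm{vol}\,S\ge d_-^r$ you correctly obtain $\tfrac12\, d_-^{r/2}(\mathrm{vol}\,S)^{-1/2}\le\tfrac12$, i.e., $\tau_\rho(S)\le\rho^{-1}\log\tfrac32$ for every nonempty $S$: the singleton of minimal degree is where $\tau_\rho$ is \emph{largest}, because $\tau_\rho$ is decreasing in $\mathrm{vol}\,S$. Consequently $\tau<\rho^{-1}\log\tfrac32$ does not place $\tau$ below $\tau_\rho(S)$ for arbitrary $S$, and ``substituting the extremal volume'' identifies the maximum, not the minimum, of the family of thresholds; the uniform pinning threshold actually extractable from \eqref{eq:taup} is $\rho^{-1}\log\bigl(1+\tfrac12\, d_-^{r/2}(\mathrm{vol}\,V)^{-\frac12}\bigr)$, taken at the largest admissible volume. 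The paper's proof never addresses the ``in particular'' sentence at all, and its parenthetical justification (``since $\mathrm{vol}\,S>d_-^{r/2}$'') suffers from the same direction problem, so you have faithfully reproduced a defect of the source rather than introduced a new one --- but as written, your last step does not follow from your (correct) inequality.
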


\begin{proof} To prove \eqref{eq:taup}, let $\chi_S$ be the characteristic function on a set $S\subset V$. For a node to be added or removed from $S$ by one iteration of \ref{alg:MBO}, it is necessary that $\|e^{-\tau \Delta} \chi_S - \chi_S \|_{\mathcal V, \infty} \geq \frac{1}{2}$.
For the linear operator $A \colon \mathcal{V} \to \mathcal{V}$, let $\| A \|_{\mathcal V}$ be the operator norm induced by $\| \cdot \|_{\mathcal V}$, {\it i.e.},
$$
\| A \|_{\mathcal V} = \max_{u \in \mathcal V \setminus \{ 0\}} \frac{\| A u \|_{\mathcal V}}{\| u \|_{\mathcal V} }
$$
(see also Lemma~\ref{lem:Spectral}(b)).
Using Lemma~\ref{lem:normEquiv}, we compute
\begin{align*}
\|e^{-\tau \Delta}\chi_S  - \chi_S \|_{\mathcal V, \infty} &\leq
d_-^{-\frac{r}2} \ \| e^{ -\tau \Delta} \chi_S  - \chi_S \|_{\mathcal V}\\
&\leq d_-^{-\frac{r}{2}} \ \| e^{ -\tau \Delta}  -\mathrm{Id} \|_{\mathcal V}  \  \sqrt{\mathrm{vol} \ S }.
\end{align*}
Using the triangle inequality and the submultiplicative property of $\| \cdot \|_{\mathcal V}$ (see, {\it e.g.}, \cite{Horn1990}), we compute
$$
\| e^{-\tau \Delta} - \mathrm{Id} \|_{\mathcal V} \leq \sum_{k=1}^\infty \frac{1}{k!} (\tau \| \Delta \|_\mathcal V)^k = e^{\rho \tau}-1.
$$
 Thus, if $\tau < \rho^{-1} \log \left( 1+ \frac{1}{2} \ d_-^{\frac{r}{2}} \ ( \mathrm{vol} \ S)^{-\frac12}\right)$, all nodes are stationary under an \ref{alg:MBO} iteration.

 To prove \eqref{eq:tauk}, we write the solution to the heat equation at time $\tau$,
 $$
u(\tau) = e^{-\tau \Delta} \chi_S = \chi_S - \int_0^\tau \Delta u(t) dt.
$$
This implies
\begin{align*}
\| u(\tau) - \chi_S \|_{\mathcal V, \infty}
&= \left\| \int_0^\tau \Delta u(t) dt \right\|_{\mathcal V, \infty}
\leq \int_0^\tau \left\|  e^{-t \Delta} \Delta \chi_S \right\|_{\mathcal V, \infty} \,dt\\
&\leq \tau \left\|  \Delta \chi_S \right\|_{\mathcal V, \infty}.
\end{align*}
Here, we used the comparison principle, Lemma~\ref{lem:DiffusionGraph}(d). Thus, if $\tau \leq \frac{1}{2 \| \Delta \chi_S \|_{\mathcal V, \infty}}$, then $\| u(\tau) - \chi_S \|_{\mathcal V, \infty}  \leq \frac{1}{2}$,  implying  the \ref{alg:MBO} solution is stationary.
%Finally, we remark that \eqref{eq:tauk} and the   inequality $\left\|  \Delta \chi_S \right\|_{\mathcal V, \infty} \leq d_-^{-\frac{r}{2}} \rho \sqrt{ \mathrm{vol} S} \leq \rho$ show that if  $\tau \leq \frac{1}{2} \rho^{-1} $, then the \ref{alg:MBO} evolution is pinned for any initial set $S\subset V$.
\end{proof}

\bigskip

The following corollary of Lemma~\ref{lem:DiffusionGraph}(c) shows that an upper bound on $\tau$ is necessary to avoid trivial dynamics.
\begin{theorem}
\label{cor:trivialDynamics}
Let the graph be connected. Consider the \ref{alg:MBO} algorithm with initial condition $\chi_S$, for a node set $S\subset V$. Assume $R_S := \frac{\mathrm{vol} S}{ \mathrm{vol} V} \neq \frac{1}{2}$.  If
\begin{equation}
\label{eq:taut}
\tau > \tau_t := \frac{1}{\lambda_2} \log \left( \frac{(\mathrm{vol} S)^\frac{1}{2} \ (\mathrm{vol} S^c)^\frac{1}{2}  } { (\mathrm{vol} V)^\frac{1}{2} \ | R_S -\frac{1}{2}| \ d_-^\frac{r}{2}}\right),
\end{equation}
 where the mass $M(u_0)$ is defined in \eqref{eq:mass}, then
$$
P e^{-\tau \Delta} u_0 = \begin{cases}
\chi_V & R_S >\frac{1}{2}, \\
0 & R_S <\frac{1}{2}. \\
\end{cases}
$$
\end{theorem}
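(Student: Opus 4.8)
The plan is to exploit the long-time convergence of graph diffusion to its steady state, as quantified in Lemma~\ref{lem:DiffusionGraph}(c), and then observe that once the diffused profile is uniformly close enough to the constant $R_S$, the threshold step can only produce all of $V$ or the empty set. First I would record that for the initial datum $u_0 = \chi_S$ the conserved mass is $M = \langle \chi_S, \chi_V\rangle_{\mathcal V} = \mathrm{vol}\, S$, so the steady state to which $u(t) = e^{-t\Delta}\chi_S$ converges is the constant function $(\mathrm{vol}\, V)^{-1} M\, \chi_V = R_S\,\chi_V$.

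The quantity governing the convergence rate is $\|\chi_S - R_S\chi_V\|_{\mathcal V}$, which I would compute explicitly. Splitting the defining sum over $S$ and $S^c$, using $(\chi_S)_i - R_S = 1-R_S$ on $S$ and $=-R_S$ on $S^c$, and substituting $1 - R_S = (\mathrm{vol}\, S^c)/(\mathrm{vol}\, V)$, one gets
$$
\|\chi_S - R_S\chi_V\|_{\mathcal V}^2 = (1-R_S)^2\,\mathrm{vol}\, S + R_S^2\,\mathrm{vol}\, S^c = \frac{\mathrm{vol}\, S \cdot \mathrm{vol}\, S^c}{\mathrm{vol}\, V},
$$
so that $\|\chi_S - R_S\chi_V\|_{\mathcal V} = (\mathrm{vol}\, S)^{1/2}(\mathrm{vol}\, S^c)^{1/2}(\mathrm{vol}\, V)^{-1/2}$. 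This is precisely the combination appearing inside the logarithm in \eqref{eq:taut}, which is what makes the threshold come out exactly as $\tau_t$.

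Next I would feed this into the decay estimate established within the proof of Lemma~\ref{lem:DiffusionGraph}(c), namely $\|u(\tau) - R_S\chi_V\|_{\mathcal V, \infty} \leq d_-^{-r/2} e^{-\lambda_2 \tau}\|\chi_S - R_S\chi_V\|_{\mathcal V}$. Requiring the right-hand side to be strictly less than $|R_S - \tfrac12|$ and solving for $\tau$ reproduces exactly the hypothesis $\tau > \tau_t$; importantly, the strictness of that hypothesis yields the strict bound $\|u(\tau) - R_S\chi_V\|_{\mathcal V,\infty} < |R_S - \tfrac12|$ at the exact time $\tau$, so no intermediate-time argument is needed.

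Finally I would convert this uniform bound into the thresholding conclusion. If $R_S > \tfrac12$ then $u_i(\tau) > R_S - |R_S - \tfrac12| = \tfrac12$ at every node, whence $S_1 = \{i : u_i(\tau) \geq \tfrac12\} = V$; if $R_S < \tfrac12$ then $u_i(\tau) < R_S + |R_S - \tfrac12| = \tfrac12$ at every node, whence $S_1 = \emptyset$ and $\chi_{S_1} = 0$. The one point requiring genuine care is this second case: because the threshold in Algorithm~\ref{alg:MBO} includes the level set $\{u = \tfrac12\}$ (Remark~\ref{Rem:0or1}), a merely non-strict deviation bound would not rule out spurious nodes, which is exactly why I work with the strict decay estimate rather than the $\leq \e$ form in the statement of the lemma. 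Handling this boundary threshold correctly is the only real subtlety; the remainder is the explicit norm computation above followed by a direct substitution.
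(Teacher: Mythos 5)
Your proposal is correct and follows essentially the same route as the paper: apply Lemma~\ref{lem:DiffusionGraph}(c) with $\e = |R_S - \tfrac12|$ after computing $\|\chi_S - R_S\chi_V\|_{\mathcal V}^2 = \frac{\mathrm{vol}\, S\,\cdot\, \mathrm{vol}\, S^c}{\mathrm{vol}\, V}$. Your extra care about using the \emph{strict} decay estimate at time $\tau$ itself (rather than the non-strict $\leq \e$ bound for $t>\tau$ in the lemma's statement), so that the inclusive threshold at the $\tfrac12$-level set cannot admit spurious nodes when $R_S<\tfrac12$, is a genuine refinement of a point the paper's proof passes over silently.
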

\begin{proof}
In Lemma~\ref{lem:DiffusionGraph}(c), set $\e = | (\mathrm{vol} \ V)^{-1} M(u_0)-\frac{1}{2}| = | R_S-\frac{1}{2}|$. This implies that
$\| u(\tau ) - R_S \|_{\mathcal{V},\infty} \leq | R_S -\frac{1}{2}|$, as desired.
With $\e = | R_S-\frac{1}{2}|$, the condition on $\tau$ in Lemma~\ref{lem:DiffusionGraph}(c) is
\[
\tau > \frac{1}{\lambda_2} \log \left(| R_S-\frac{1}{2}|^{-1} \ d_-^{-\frac{r}{2}}  \  \| u_0 - R_S  \|_{\mathcal V}  \right).
\]
For $u_0 = \chi_S$, $\| u_0 - R_S  \|^2_{\mathcal V} = \frac{\mathrm{vol} S \ \mathrm{vol} S^c  } { \mathrm{vol} V }$.
\end{proof}
This corollary shows that, if $\tau$ is chosen too large, one iteration of \ref{alg:MBO} leads to a trivial state $u=\chi_V$ or $u=0$, which is stationary under the algorithm~\ref{alg:MBO}.

The following theorem gives a condition for which there is a gap between the lower and upper bound for $\tau$.

\begin{theorem}\label{thm:MBOgap}
Consider the \ref{alg:MBO} iterations on a graph with $n\geq 2$. Let $\tau_\rho$ and $\tau_t$ be defined as in \eqref{eq:taup} and \eqref{eq:taut}. If $\frac{\lambda_2}{\lambda_n} < \frac{ \log \sqrt{2}}{ \log \frac{3}{2}}  \approx 0.85$, then $\tau_\rho < \tau_t$.
\end{theorem}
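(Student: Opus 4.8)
We want to show that the lower bound $\tau_\rho(S)$ and upper bound $\tau_t$ for the $\tau$-window are genuinely separated, i.e. $\tau_\rho < \tau_t$, under the spectral gap hypothesis $\frac{\lambda_2}{\lambda_n} < \frac{\log\sqrt 2}{\log\frac32}$. The natural strategy is to bound $\tau_\rho$ from above by a quantity that does not depend on the fine structure of $S$, bound $\tau_t$ from below similarly, and then compare the two using only $\lambda_2$, $\lambda_n = \rho$, and the hypothesis.

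First I would simplify $\tau_\rho(S)$. From \eqref{eq:taup},
\[
\tau_\rho(S) = \rho^{-1}\log\left(1 + \tfrac12\, d_-^{r/2}\,(\mathrm{vol}\,S)^{-1/2}\right).
\]
Since $\log$ is increasing and $\tfrac12 d_-^{r/2}(\mathrm{vol}\,S)^{-1/2} \le \tfrac12$ (because $\mathrm{vol}\,S \ge d_-^{r}$, as noted just after the theorem statement where it says $\mathrm{vol}\,S > d_-^{r/2}$ — more precisely every single vertex contributes at least $d_-^r$), we get the clean $S$-independent bound
\[
\tau_\rho(S) \le \rho^{-1}\log\tfrac32 = \lambda_n^{-1}\log\tfrac32.
\]
This uses only $\rho = \lambda_n$.

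Next I would bound $\tau_t$ from below. The delicate point is that the argument of the logarithm in \eqref{eq:taut},
\[
\frac{(\mathrm{vol}\,S)^{1/2}(\mathrm{vol}\,S^c)^{1/2}}{(\mathrm{vol}\,V)^{1/2}\,|R_S-\tfrac12|\,d_-^{r/2}},
\]
depends on $S$ through both the volume ratios and $|R_S - \tfrac12|$, and a priori this quantity could be small, making $\tau_t$ small. The key estimate is to show this argument is always at least $\sqrt 2$, so that $\tau_t \ge \lambda_2^{-1}\log\sqrt2$. To see this, write $v = \mathrm{vol}\,S$, $v^c = \mathrm{vol}\,S^c$, $V = \mathrm{vol}\,V = v + v^c$, so $R_S = v/V$ and $|R_S - \tfrac12| = |v - v^c|/(2V)$. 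The numerator-over-denominator (ignoring $d_-^{r/2}\le 1$, which only helps) is
\[
\frac{\sqrt{v\,v^c}}{\sqrt V\,\cdot\,|v-v^c|/(2V)} = \frac{2\sqrt V\,\sqrt{v v^c}}{|v - v^c|}.
\]
I expect this to be bounded below by a constant of size $\sqrt 2$ or larger using $v + v^c = V$ together with the integrality/minimal-volume structure; the cleanest route is to check that the worst case (the infimum over admissible $S$) still exceeds $\sqrt 2$, possibly invoking $d_-^{r/2}\le 1$ and the discreteness of the volumes. This lower bound on the log-argument is the main obstacle, since it is where one must rule out degenerate near-balanced sets; I would handle it by minimizing $\tfrac{2\sqrt V \sqrt{vv^c}}{|v-v^c|}$ over the finitely many possible volume pairs and verifying the bound, or by a monotonicity argument showing the expression is smallest when $|R_S-\tfrac12|$ is largest, i.e. for the most unbalanced nontrivial $S$.

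Finally, assembling the two bounds, it suffices to have
\[
\lambda_n^{-1}\log\tfrac32 \;<\; \lambda_2^{-1}\log\sqrt2,
\]
which rearranges exactly to $\frac{\lambda_2}{\lambda_n} < \frac{\log\sqrt2}{\log\frac32}$ — precisely the hypothesis. Since $\tau_\rho(S) \le \lambda_n^{-1}\log\frac32 < \lambda_2^{-1}\log\sqrt2 \le \tau_t$, the conclusion $\tau_\rho < \tau_t$ follows. The only real content beyond bookkeeping is the lower bound of $\sqrt2$ on the logarithm's argument in $\tau_t$; everything else is a direct substitution and a single monotone rearrangement of the hypothesis.
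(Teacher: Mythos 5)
Your overall strategy is exactly the paper's: bound $\tau_\rho \le \lambda_n^{-1}\log\frac32$ using $\mathrm{vol}\,S \ge d_-^r$, bound $\tau_t \ge \lambda_2^{-1}\log\sqrt2$, and observe that the hypothesis is precisely the rearrangement of $\lambda_n^{-1}\log\frac32 < \lambda_2^{-1}\log\sqrt2$. The first bound and the final comparison are correct as you wrote them. The gap is in the second bound, which you yourself flag as ``the only real content'': you never actually prove that the argument of the logarithm in $\tau_t$ is at least $\sqrt2$, you only say you ``expect'' it and gesture at checking finitely many cases or a monotonicity argument. Worse, the one concrete reduction you do offer --- ``ignoring $d_-^{r/2}\le 1$, which only helps'' --- is not valid: $d_-$ is a sum of arbitrary positive edge weights, so for $r>0$ one can easily have $d_-^{r/2}>1$, in which case dropping that factor from the denominator \emph{strengthens} the quantity you are trying to bound from below, and a bound on the simplified expression does not transfer back. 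Your fallback of ``minimizing over the finitely many possible volume pairs'' also cannot work as stated, since the weights (hence the volumes) are arbitrary positive reals, not integers.

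The factor $d_-^{r/2}$ must instead be cancelled against the volumes, which is what the paper does. Writing $v=\mathrm{vol}\,S$, $v^c=\mathrm{vol}\,S^c$, $V=\mathrm{vol}\,V$, both $v$ and $v^c$ are at least $d_-^r$ (for $S$ a proper nonempty subset), so by concavity of $x\mapsto x(V-x)$ on $[d_-^r,\,V-d_-^r]$ one has $v\,v^c \ge d_-^r\,(V-d_-^r)$. Combined with $|R_S-\tfrac12|\le\tfrac12$, the argument of the logarithm satisfies
\begin{equation*}
\frac{\sqrt{v\,v^c}}{\sqrt{V}\,|R_S-\tfrac12|\,d_-^{r/2}}
\;\ge\; \frac{d_-^{r/2}\sqrt{V-d_-^r}}{\sqrt{V}\cdot\tfrac12\cdot d_-^{r/2}}
\;=\; 2\sqrt{1-\frac{d_-^r}{V}}
\;\ge\; \sqrt2,
\end{equation*}
the last step using $V\ge 2d_-^r$ for $n\ge 2$. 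With this inserted in place of your heuristic, your argument closes and coincides with the paper's proof.
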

\begin{proof}
Since $\left| R_S - \frac{1}{2} \right| \leq \frac{1}{2}$, $d_-^r \leq \mathrm{vol} S $, and $(\mathrm{vol} S) \ (\mathrm{vol} S^c) > d_-^r ( \mathrm{vol} V - d_-^r)$ we have
\begin{align*}
\tau_\rho &= \lambda_2^{-1} \log \left( 1+\frac{1}{2} \ d_-^{\frac{r}{2}} \ ( \mathrm{vol} \ S)^{-\frac12}\right) \geq\lambda_2^{-1} \  \log \left(2 \sqrt{ 1 - \frac{d_-^r}{ \mathrm{vol} V} } \right)\\
&\geq \lambda_2^{-1} \ \log \sqrt{2}.
\end{align*}
Since $d_-^r \leq \mathrm{vol} S $, we have
$$
\tau_t = \lambda_n^{-1} \ \log \left( 1 + \frac{1}{2} \sqrt{\frac{d_-^r}{ \mathrm{vol} S} }  \right)
\leq \lambda_n^{-1} \ \log \frac{3}{2}.
$$
The result follows. \end{proof}

\bigskip

Theorem~\ref{thm:MBOgap} further  reenforces our intuition that $\tau$ should be chosen in the interval
$(\lambda_n^{-1}, \lambda_2^{-1})$. If, for a particular graph, this interval is very small, then Theorems
\ref{thm:pinningMBO} and~\ref{cor:trivialDynamics} cannot provide an interval for which the \ref{alg:MBO} iterations has a chance of being non-stationary after the first iteration. Note however that the interval $[\tau_\rho, \tau_t]$ given by these theorems, is not necessarily a sharp interval for interesting dynamics.

\subsection{A Lyapunov functional for the graph~\ref{alg:MBO} algorithm} \label{sec:Lyapunov}
In this section we introduce a functional which is  decreasing on iterations of the \ref{alg:MBO} algorithm.
The analogous functional for the continuum setting was recently found in \cite{esedoglu2013}.
The functional is then used to show that the \ref{alg:MBO} algorithm with any initial condition converges to a stationary state in a finite number of iterations.

Let $\tau >0$ and consider the functional $J\colon \mathcal V \to \mathbb R$ defined by
\begin{equation}
\label{eq:Lyapunov}
J(u) = \langle 1-u, e^{-\tau \Delta} u \rangle_{\mathcal V}.
\end{equation}
Note that by, Lemma~\ref{lem:DiffusionGraph}(a), $J(u) = M(u) - \langle u, e^{-\tau \Delta} u \rangle_{\mathcal V}$, where $M$ is the mass from \eqref{eq:mass}.

\begin{lemma}
The functional $J$ defined in \eqref{eq:Lyapunov} has the following elementary properties.
\begin{enumerate}
\item $J$ is a strictly concave functional on $\mathcal V$.
\item $J$ is Fr\'echet differentiable with derivative in the direction $v$ given by
$$
L_u(v) := \Big\langle \frac{\delta J}{\delta u}\bigg|_u, v \Big\rangle_{\mathcal V}, \quad \mathrm{where} \quad  \frac{\delta J}{\delta u}\bigg|_u = 1-2 e^{-\tau \Delta} u.
$$
\end{enumerate}
\end{lemma}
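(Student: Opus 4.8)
The plan is to exploit the fact that $J$ is nothing but a quadratic polynomial in the $n$ coordinates of $u$, so that on the finite-dimensional space $\mathcal{V}\cong\R^n$ smoothness (and in particular Fréchet differentiability) is automatic, and both claimed properties reduce to computing the first and second variations. First I would split $J$ into its affine and quadratic parts by writing
\[
J(u) = \langle 1, e^{-\tau\Delta}u\rangle_{\mathcal V} - \langle u, e^{-\tau\Delta}u\rangle_{\mathcal V},
\]
observing that the first term is linear in $u$ and the second is a quadratic form. The two structural facts that drive everything are that $\Delta$, and hence $e^{-\tau\Delta}$, is self-adjoint for $\langle\cdot,\cdot\rangle_{\mathcal V}$, and that $\Delta 1 = 0$ (since $(\Delta\chi_V)_i = 0$ directly from the formula for $\Delta$), so that $e^{-\tau\Delta}1 = 1$.

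For the derivative I would compute the Gateaux derivative $\frac{d}{dt}\big|_{t=0}J(u+tv)$ directly. The linear part contributes $\langle 1, e^{-\tau\Delta}v\rangle_{\mathcal V}$, and the quadratic part contributes $-\langle v, e^{-\tau\Delta}u\rangle_{\mathcal V} - \langle u, e^{-\tau\Delta}v\rangle_{\mathcal V}$. Using self-adjointness, the two cross terms coincide, giving $2\langle e^{-\tau\Delta}u, v\rangle_{\mathcal V}$, while $\langle 1, e^{-\tau\Delta}v\rangle_{\mathcal V} = \langle e^{-\tau\Delta}1, v\rangle_{\mathcal V} = \langle 1, v\rangle_{\mathcal V}$ by the identity $e^{-\tau\Delta}1=1$. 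This yields
\[
L_u(v) = \langle 1 - 2e^{-\tau\Delta}u,\, v\rangle_{\mathcal V},
\]
and since $\mathcal V$ is finite-dimensional and $J$ is polynomial, this Gateaux derivative is in fact the Fréchet derivative, giving the claimed gradient $\frac{\delta J}{\delta u}\big|_u = 1-2e^{-\tau\Delta}u$.

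For strict concavity I would compute the second variation, which for the quadratic $J$ is independent of $u$ and equal to $\frac{d^2}{dt^2}J(u+tv) = -2\langle v, e^{-\tau\Delta}v\rangle_{\mathcal V}$. The crux is then to show that $e^{-\tau\Delta}$ is positive definite in the $\mathcal V$ inner product. This follows from the spectral decomposition recorded in Lemma~\ref{lem:Spectral}: expanding $v = \sum_k c_k v_k$ in a $\mathcal V$-orthonormal eigenbasis with eigenvalues $\lambda_k \ge 0$ gives $\langle v, e^{-\tau\Delta}v\rangle_{\mathcal V} = \sum_k e^{-\tau\lambda_k}c_k^2$, which is strictly positive whenever $v\neq 0$ because every factor $e^{-\tau\lambda_k}$ is positive. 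Hence the Hessian bilinear form of $J$ is everywhere negative definite, and a $C^2$ function on $\R^n$ with negative-definite Hessian is strictly concave.

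The computations are entirely routine; the only place that requires a little care—and the one I would flag as the main (small) obstacle—is keeping the self-adjointness and the identity $e^{-\tau\Delta}1=1$ straight, so that the linear term in $L_u$ collapses to $\langle 1, v\rangle_{\mathcal V}$ rather than $\langle 1, e^{-\tau\Delta}v\rangle_{\mathcal V}$, and likewise ensuring that the positive-definiteness of $e^{-\tau\Delta}$ is asserted with respect to the weighted inner product $\langle\cdot,\cdot\rangle_{\mathcal V}$ rather than the Euclidean one.
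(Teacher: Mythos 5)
Your proposal is correct and follows essentially the same route as the paper's proof: compute the Gateaux derivative using self-adjointness of $e^{-\tau\Delta}$ and $e^{-\tau\Delta}1=1$, and establish strict concavity from the second variation $-2\langle v,e^{-\tau\Delta}v\rangle_{\mathcal V}$. The only difference is that you spell out the positive-definiteness of $e^{-\tau\Delta}$ via the spectral decomposition, which the paper simply asserts.
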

\begin{proof}  We compute, for all $v \neq 0$,
$$
\frac{d^2}{d \alpha^2} J(u+\alpha v) = - 2 \langle v, e^{-\tau \Delta} v \rangle_{\mathcal V} < 0.
$$
Taking the first variation of $J(u) = \langle 1-u, e^{-\tau \Delta} u \rangle_{\mathcal V}$, we find that
\begin{align*}
\Big\langle \frac{\delta J}{\delta u}, \delta u \Big\rangle_{\mathcal V}
&:= \Big\langle 1-u , e^{-\tau \Delta} \delta u  \Big\rangle_{\mathcal V} - \Big\langle \delta u, e^{-\tau \Delta} u   \Big\rangle_{\mathcal V}\\
&=  \Big\langle 1-2 e^{-\tau \Delta} u, \delta v \Big\rangle_{\mathcal V},
\end{align*}
as desired.
\end{proof}

\medskip

Define the convex set $\mathcal K := \{ \phi \in \mathcal V\colon \forall j \in V \ \phi_j \in [0,1] \}$. Is it instructive to consider the optimization problem,
\begin{equation}
\label{eq:Jopt}
\min_{u \in \mathcal K} \ J(u).
\end{equation}
Since the objective function in \eqref{eq:Jopt} is concave and the admissible set is a compact and convex set,  it follows that the solution to \eqref{eq:Jopt} is attained by a vertex function $u \in  \mathcal B :=  \{ v \in \mathcal V\colon \forall j \in V\ v_j \in \{0,1\}\}$. Here $\mathcal B$ is the set of binary vertex functions, taking the value $0$ or $1$ on each vertex.
The sequential linear programming approach to solving the system
\eqref{eq:Jopt} is to consider
a  sequence of vertex functions $\{u_k\}_{k=0}^\infty$ which satisfies
\begin{equation}
\label{eq:SLP}
u_{k+1} = \arg \min_{v\in  K} \ L_{u_k} (v), \qquad u_0 = \chi_S, \text{ for a node set } S \subset V.
\end{equation}
The optimization problem in \eqref{eq:SLP} may not have  a unique solution, so the iterates are not well-defined. The following proposition shows that the iterations of the \ref{alg:MBO} algorithm define a unique sequence satisfying \eqref{eq:SLP}. Note that the optimization problem in \eqref{eq:SLP} is the minimization of a linear objective function over a compact and convex set, implying that for any sequence $\{u_k\}_{k=0}^\infty$ satisfying  \eqref{eq:SLP},  $u_k \in \mathcal B$ for all $k\geq 0$.

\begin{prop} \label{prop:Lyapunov}
The iterations defined by the \ref{alg:MBO} algorithm satisfy \eqref{eq:SLP}. The functional $J$, defined in \eqref{eq:Lyapunov}, is non-increasing on the iterates $\{ u_k \}_{k=1}^\infty$, {\it i.e.}, $J(u_{k+1}) \leq J(u_k)$, with equality only obtained if $u_{k+1} = u_k$. Consequently, the \ref{alg:MBO} algorithm with any initial condition converges to a stationary state in a finite number of iterations.
\end{prop}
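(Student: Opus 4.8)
The plan is to prove the three assertions in turn, leaning on the strict concavity of $J$ and the finiteness of the binary set $\mathcal{B}$. First I would verify that the \ref{alg:MBO} iterates solve the sequential linear program \eqref{eq:SLP}. Given $u_k\in\mathcal{B}$, write $w := e^{-\tau\Delta}u_k$ and observe that the linear objective is separable over nodes,
\[
L_{u_k}(v) = \langle 1 - 2e^{-\tau\Delta}u_k, v\rangle_{\mathcal V} = \sum_{i\in V} d_i^r\,\big(1 - 2w_i\big)\, v_i.
\]
Since $d_i^r>0$, minimizing over $v\in\mathcal K$ amounts to choosing, independently at each node, $v_i=1$ when $1-2w_i<0$ (that is $w_i>\tfrac12$), $v_i=0$ when $w_i<\tfrac12$, and any value in $[0,1]$ when $w_i=\tfrac12$. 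The threshold step of \ref{alg:MBO}, which sets $(u_{k+1})_i=1$ precisely when $w_i\geq\tfrac12$, selects one such minimizer at every node; hence $u_{k+1}\in\mathcal B$ is a minimizer of $L_{u_k}$ over $\mathcal K$, and the convention of Remark~\ref{Rem:0or1} (break ties in favor of $1$) makes this selection unique, so the \ref{alg:MBO} sequence is the well-defined solution of \eqref{eq:SLP}.

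Next I would establish the monotonicity of $J$. Since $J$ is strictly concave, its graph lies below each tangent plane, so for any $u,w\in\mathcal V$,
\[
J(w) \leq J(u) + L_u(w-u),
\]
with equality if and only if $w=u$. Applying this with $u=u_k$ and $w=u_{k+1}$, and using linearity of $L_{u_k}$, gives
\[
J(u_{k+1}) \leq J(u_k) + \big[L_{u_k}(u_{k+1}) - L_{u_k}(u_k)\big].
\]
Because $u_{k+1}$ minimizes $L_{u_k}$ over $\mathcal K$ and $u_k\in\mathcal B\subset\mathcal K$, the bracketed term is nonpositive, yielding $J(u_{k+1})\leq J(u_k)$. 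If $u_{k+1}\neq u_k$, strict concavity makes the tangent inequality strict, forcing $J(u_{k+1})<J(u_k)$; conversely, the equality $J(u_{k+1})=J(u_k)$ can hold only when $u_{k+1}=u_k$, which is exactly the claimed strictness.

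Finally, the convergence statement follows by combining strict descent with finiteness. Every iterate lies in $\mathcal B$, a set of cardinality $2^{n}$, so $J$ takes only finitely many values along the orbit. A strictly decreasing real sequence cannot take infinitely many distinct values out of a finite set; therefore $u_{k+1}=u_k$ for some $k$, at which point the deterministic \ref{alg:MBO} map fixes the state for all subsequent iterations. Thus the algorithm reaches a stationary state after finitely many steps, the number being bounded by the number of distinct values of $J$ on $\mathcal B$.

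The step I expect to require the most care is the first one: the program \eqref{eq:SLP} is genuinely multivalued at nodes where the diffused value equals $\tfrac12$, and the content of the proposition is precisely that the thresholding convention of \ref{alg:MBO} furnishes a consistent single-valued selection among these minimizers. So the assertion that ``the \ref{alg:MBO} iterations satisfy \eqref{eq:SLP}'' is really a statement about this tie-breaking, not merely about the generic case; everything afterward is a clean application of concavity and a pigeonhole argument on the finite set $\mathcal B$.
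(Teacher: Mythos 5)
Your proof is correct and follows essentially the same route as the paper: identify the node-wise minimizers of the linear program with the threshold step, use strict concavity plus the tangent inequality and the minimality of $L_{u_k}(u_{k+1})$ to get strict descent of $J$ when $u_{k+1}\neq u_k$, and conclude by finiteness of $\mathcal B$. Your extra attention to the tie-breaking at nodes where the diffused value equals $\tfrac12$ is a welcome clarification that the paper only addresses in a remark after the proposition, but it does not change the argument.
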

\begin{proof} At each iteration $k$, the objective functional $L_{u_k}$ is linear and thus the minimum is attained by a function
$$
u_{k+1} = \begin{cases}
1 & \text{ if } 1- 2 e^{-\tau \Delta} u_k \leq 0, \\
0 & \text{ if }  1- 2 e^{-\tau \Delta} u_k > 0
\end{cases}
 \quad = \quad  \chi_{\{e^{-\tau \Delta}u_k \geq \frac{1}{2} \} }.
$$
These are precisely the \ref{alg:MBO} iterations. By the strict concavity of $J$ and linearity of $L_{u_k}$, for $u_{k+1} \neq u_k$,
$$
J(u_{k+1})- J(u_k) <  L_{u_k} (u_{k+1} - u_k) =  L_{u_k} (u_{k+1}) -  L_{u_k} (u_k).
$$
Since $u_k \in \mathcal K$, $L_{u_k} (u_{k+1}) \leq  L_{u_k} (u_k)$ which implies $J(u_{k+1}) < J(u_k)$. The convergence of the algorithm in a finite number of iterations then follows from the fact that $\mathcal B$ contains only a finite number of points, the vertices of the unit $n$-cube.
\end{proof}

Proposition~\ref{prop:Lyapunov} shows that $J$ is a  \emph{Lyapunov function} for the \ref{alg:MBO} iterates. From the proof of Proposition~\ref{prop:Lyapunov}, we also note that the non-uniqueness of the iterates in \eqref{eq:SLP} corresponds to the choice in the \ref{alg:MBO} algorithm of thresholding vertices $\{ j\in V \colon e^{-\tau \Delta}u_k = \frac{1}{2} \}$ to either 0 or 1 (see Remark~\ref{Rem:0or1}).

\begin{remark}
The framework of \cite{esedoglu2013} easily allows for the extension of the MBO algorithm to more phases, however we do not pursue these ideas here.
\end{remark}

\subsection{A local guarantee for a `nonfrozen' \ref{alg:MBO} iteration}\label{sec:connections}
We begin by observing that the constant $\tau_\kappa$ in Theorem~\ref{thm:pinningMBO} depends on the maximum curvature $\kappa_S^{1,r}$ of the indicator set in the graph, $\| \Delta \chi_S \|_{\mathcal V,\infty}$. %The constant $\e_\kappa$ in Theorem~\ref{thm:pinningAC} involves $\| \Delta u \|_{\mathcal V,\infty}$, which is ``curvature-like''.
In this section, we prove a theorem which gives a condition on $\tau$ in terms of the local curvature $(\kappa_S^{1,r})_i$ at a node $i$, which guarantees that the value of $u$ on that node will change in one iteration of the graph~\ref{alg:MBO} scheme. %We suspect that a similar condition on the local curvature guarantees a phase change in the Allen-Cahn flow; we discuss this further in Question~\ref{conj:ACcurvature}.

We  first introduce some notation which is needed to state the theorem. Recall that the set of neighbors of a node $i\in V$ is $\mathcal{N}_i = \{j\in V: \omega_{ij}>0\}$. Let $1\in V$ be an arbitrary node in the graph $G$ and let $S\subset V$. We define the sets
\[
S_1 := \begin{cases} \mathcal{N}_1 \cap S^c & \text{if } 1 \in S,\\ \mathcal{N}_1 \cap S & \text{if } 1 \not\in S,\end{cases} \quad \text{and} \quad \overline{S_1} := S_1 \cup \{1\}.
\]
The set $S_1$ contains neighbors of node $1$ which are also in either the boundary $\partial(S^c)$ or $\partial S$ (depending on whether or not $1\in S$).
For $u\in \mathcal{V}$, define $\Delta'$ as
\[
(\Delta' u)_i := \begin{cases} d_i^{-r} \sum_{j\in \overline{S_1}} \omega_{ij} (u_i-u_j) & \text{if } i\in \overline{S_1},\\ 0 & \text{if } i\not\in \overline{S_1}.\end{cases}
\]
We see that $\Delta'$ on $\overline{S_1}$ is similar to the Laplacian on the subgraph induced by $\overline{S_1}$, with the important distinction that, for each $i\in V$, the degree $d_i$ is the degree of $i$ in the full graph $G$, not the degree in the subgraph induced by $\overline{S_1}$.
In \cite[Section 8.4]{Chung:1997}, $\Delta'$ is referred to as the Laplacian with Dirichlet conditions on $\partial (\overline{S_1}^c)$.
 If $v \in \mathcal{V}_1:= \{v\in \mathcal{V}: v=0 \text{ on } \overline{S_1}^c\}$, then
\[
(\Delta' v)_i = \begin{cases} (\Delta v)_i &\text{if } i\in \overline{S_1},\\ 0 & \text{if } i\not\in \overline{S_1}.\end{cases}
\]
Note in particular that, if $v\in \mathcal{V}_1$, then $e^{-t\Delta'} v \in \mathcal{V}_1$ for all $t\geq0$.

\begin{theorem}\label{thm:localtau}
Let $1\in V$ be an arbitrary node and $S\subset V$ be such that $|(\kappa_S^{1,r})_1|^2 > \|(\Delta')^2 \chi_{S_1}\|_{\mathcal{V},\infty}$. If $\tau \in (\tau_1, \tau_2)$, where
\begin{align}\label{eq:tau12}
\tau_{1,2} &:= \frac1{\|(\Delta')^2 \chi_{S_1}\|_{\mathcal{V},\infty}} \left(|(\kappa_S^{1,r})_1| \pm \sqrt{|(\kappa_S^{1,r})_1|^2 - \|(\Delta')^2 \chi_{S_1}\|_{\mathcal{V},\infty}}\right)\notag \\ &> 0,
\end{align}
then
\[
|(P e^{-\tau \Delta} \chi_S)_1 - (\chi_S)_1| = 1.
\]
That is, the phase at node $1$ changes after one \ref{alg:MBO} iteration.
\end{theorem}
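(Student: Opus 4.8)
The plan is to reduce the statement to a single scalar inequality for the diffused value at node $1$ and then control that value by a second-order Taylor expansion whose quadratic coefficient is governed by the localized operator $\Delta'$. The two cases in the definition of $S_1$ are symmetric, so I would treat $1\in S$ in detail and handle $1\notin S$ identically after interchanging $\chi_S$ and $\chi_{S^c}$. Since $(\chi_S)_1=1$ when $1\in S$, the thresholding step flips node $1$ precisely when $(e^{-\tau\Delta}\chi_S)_1<\tfrac12$, equivalently $(e^{-\tau\Delta}\chi_{S^c})_1>\tfrac12$ (using $e^{-\tau\Delta}\chi_V=\chi_V$ from Lemma~\ref{lem:Spectral}(a)). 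Thus the whole theorem reduces to the lower bound $(e^{-\tau\Delta}\chi_{S^c})_1>\tfrac12$.

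Writing $a:=|(\kappa_S^{1,r})_1|$ and $b:=\|(\Delta')^2\chi_{S_1}\|_{\mathcal V,\infty}$, I would first record that $\tau_1,\tau_2$ are exactly the roots of $\tfrac{b}{2}\tau^2-a\tau+\tfrac12$, so that $\tau\in(\tau_1,\tau_2)$ is equivalent to $a\tau-\tfrac{b}{2}\tau^2>\tfrac12$; this is where the hypothesis $a^2>b$ enters, ensuring two positive real roots and a nonempty interval. The heart of the argument is then a Taylor estimate for the \emph{Dirichlet} flow $q(t):=e^{-t\Delta'}\chi_{S_1}$. Expanding with Lagrange remainder, $q_1(\tau)=q_1(0)+\tau\dot q_1(0)+\tfrac{\tau^2}{2}\ddot q_1(\xi)$ for some $\xi\in(0,\tau)$. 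Using the setup identities one computes $q_1(0)=(\chi_{S_1})_1=0$, $\dot q_1(0)=-(\Delta'\chi_{S_1})_1=a$, and $\ddot q_1(\xi)=((\Delta')^2e^{-\xi\Delta'}\chi_{S_1})_1$. Because $\Delta'$ has nonpositive off-diagonal entries and zero row sums on $\overline{S_1}$, the semigroup $e^{-t\Delta'}$ is positivity preserving and a contraction in $\|\cdot\|_{\mathcal V,\infty}$ (a comparison principle parallel to Lemma~\ref{lem:DiffusionGraph}(d)); hence $|\ddot q_1(\xi)|\le\|(\Delta')^2e^{-\xi\Delta'}\chi_{S_1}\|_{\mathcal V,\infty}\le\|(\Delta')^2\chi_{S_1}\|_{\mathcal V,\infty}=b$, which yields $q_1(\tau)\ge a\tau-\tfrac{b}{2}\tau^2$.

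The main obstacle is the transfer step: relating the genuinely global quantity $(e^{-\tau\Delta}\chi_{S^c})_1$ to the localized Dirichlet value $q_1(\tau)$. The intended tool is the comparison principle of Lemma~\ref{lem:DiffusionGraph}(d), together with the fact recorded in the setup that $\Delta'$ coincides with $\Delta$ on functions supported in $\overline{S_1}$ at interior nodes, combined with the monotone coupling of initial data $\chi_{S^c}\ge\chi_{S_1}$. The delicate point is getting the inequality to point in the right direction: the Dirichlet restriction confines diffusing mass to $\overline{S_1}$ whereas the full flow lets it disperse across the graph, so the boundary contribution $c_i:=d_i^{-r}\sum_{j\notin\overline{S_1}}\omega_{ij}$ must be tracked carefully to see whether it helps or hurts the desired bound. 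I expect this to be the crux of the proof, and the place where the precise form of the hypothesis $a^2>b$ must be reconciled with the lower-order structure of the expansion. Once a comparison of the form $(e^{-\tau\Delta}\chi_{S^c})_1\ge a\tau-\tfrac{b}{2}\tau^2$ is established, it combines immediately with $a\tau-\tfrac{b}{2}\tau^2>\tfrac12$ on $(\tau_1,\tau_2)$ to force $(e^{-\tau\Delta}\chi_{S^c})_1>\tfrac12$, completing the argument.
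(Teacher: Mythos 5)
Your reduction to a single scalar inequality at node $1$, your observation that $\tau_1,\tau_2$ are the roots of $\tfrac{b}{2}\tau^2-a\tau+\tfrac12$ (so that $\tau\in(\tau_1,\tau_2)$ is equivalent to $a\tau-\tfrac{b}{2}\tau^2>\tfrac12$), and your second-order expansion of $e^{-t\Delta'}\chi_{S_1}$ with the sup-norm contraction bound on the Lagrange remainder all coincide with the paper's argument. But the step you explicitly leave open---passing from the localized value $q_1(\tau)$ to the global value $(e^{-\tau\Delta}\chi_{S^c})_1$---is precisely where the mathematical content of the theorem lives, so the proposal has a genuine gap rather than a routine detail to fill in. The paper closes it by chaining \emph{two} comparisons through the intermediate quantity $(e^{-\tau\Delta}\chi_{S_1})_1$: first, $\chi_{S_1}\le\chi_{S^c}$ together with the comparison principle of Lemma~\ref{lem:DiffusionGraph}(d) gives $(e^{-\tau\Delta}\chi_{S_1})_1\le(e^{-\tau\Delta}\chi_{S^c})_1$ (both initial data vanish at node $1$, so the increments compare as well); second, it asserts that $v(t)=e^{-t\Delta'}\chi_{S_1}$ is subcaloric for the full Laplacian, $\dot v_i\le-(\Delta v)_i$, whence by the theory of differential inequalities $v_1(\tau)\le(e^{-\tau\Delta}\chi_{S_1})_1$; only the localized flow is then Taylor-expanded. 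You need to state and prove this two-step chain rather than hope for a direct coupling between the two semigroups applied to different data.

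Your instinct that the direction of the comparison is delicate is well founded and should not be glossed over. With the definition of $\Delta'$ used here the edges leaving $\overline{S_1}$ are simply deleted (the sum runs only over $j\in\overline{S_1}$), so for $v\ge0$ supported in $\overline{S_1}$ and $i\in\overline{S_1}$ one computes
\begin{equation*}
\dot v_i+(\Delta v)_i=(\Delta v)_i-(\Delta' v)_i=d_i^{-r}\,v_i\sum_{j\notin\overline{S_1}}\omega_{ij}\;\ge\;0,
\end{equation*}
which is the \emph{supercaloric} direction at any interior node with a neighbor outside $\overline{S_1}$; the subcaloric inequality would be immediate if $\Delta'$ retained the absorbing coupling $\sum_{j\notin\overline{S_1}}\omega_{ij}(v_i-0)$ of a genuine Dirichlet condition. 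A complete write-up therefore must either verify the subcaloric claim for the operator actually used (at least at node $1$, where it does hold when all neighbors of $1$ lie in $\overline{S_1}\cup S^c$ as appropriate) or work with the absorbing Dirichlet Laplacian throughout, adjusting the second-order coefficient in \eqref{eq:tau12} accordingly. This is exactly the boundary contribution $c_i$ you flagged; it is the crux, and leaving it as "the intended tool is the comparison principle" does not discharge it.
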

It is important to note that both $|(\kappa_S^{1,r})_1|^2$ and $\|(\Delta')^2 \chi_{S_1}\|_{\mathcal{V},\infty}$ are local quantities, in the sense that they only depend on the structure of $G$ at node 1 and its  neighbors.
This is in contrast with Theorem~\ref{thm:pinningMBO}, which depends on the spectrum of the Laplacian on $G$. The existence of a lower bound $\tau_1$ on $\tau$ is unsurprising in the light of this earlier freezing result. The necessity for an upper bound $\tau_2$ can be understood from our wish to only use local quantities in this theorem.

\begin{proof}[Proof of Theorem~\ref{thm:localtau}]
First we assume that $1\not\in S$, so $S_1 = \mathcal{N}_1 \cap S$. By the comparison principle in Lemma~\ref{lem:DiffusionGraph}(d), $\chi_{S_1} \leq \chi_S$ on $V$ implies
$(e^{-\tau \Delta} \chi_{S_1})_1 \leq  (e^{-\tau \Delta} \chi_S)_1$. In particular, since $(\chi_{S_1})_1=(\chi_S)_1 = 0$, we have
\[
(e^{-\Delta \tau} \chi_S - \chi_S)_1 \geq (e^{-\tau \Delta} \chi_{S_1} - \chi_{S_1})_1.
\]

Let $v$ satisfy the heat equation with Dirichlet boundary data,
\[
\left\{ \begin{array}{ll}
\dot v = -(\Delta' v)_i,\\
v(0) = \chi_{S_1}.
\end{array}\right.
\]
As noted above the theorem, $v(t)\in \mathcal{V}_1$ for all $t\geq 0$.

It is easily checked that $v$ is subcaloric, {\it i.e.}, $\dot v_i \leq -(\Delta v)_i$ for all $i\in V$, and $v(0) \leq \chi_S$. In addition, the Laplacian satisfies $-(\Delta u)_i \leq -(\Delta \tilde u)_i$ if $u_i = \tilde u_i$ and $u_j \leq \tilde u_j$, for $j\neq i$. Hence, by the theory of differential inequalities (see for example \cite[Theorem 8.1(3)]{Szarski65}),
\[
v_i(t) \leq \left(e^{-t\Delta} v(0)\right)_i = \left(e^{-t\Delta} \chi_{S_1}\right)_i, \quad \text{for all } i\in V.
\]
In particular,
\begin{align*}
\left(e^{-\tau \Delta} \chi_{S_1} - \chi_{S_1}\right)_1 &\geq v_1(\tau) - v_1(0) = \left(e^{-\tau \Delta'}\chi_{S_1} - \chi_{S_1}\right)_1\\
&= -\tau \left(\Delta' \chi_{S_1}\right)_1 + \tau^2 r(\tau),
\end{align*}
where
\[
|r(\tau)| \leq \frac12 \underset{t\in[0,\tau]}\sup\, \left(e^{-t\Delta'} (\Delta')^2 \chi_{S_1}\right)_1 \leq \frac12 \|(\Delta')^2 \chi_{S_1}\|_{\mathcal{V},\infty}.
\]
Note that $-\left(\Delta' \chi_{S_1}\right)_1 = -(\kappa_S^{1,r})_1 = |(\kappa_S^{1,r})_1|$, where the last equality follows because $1\not\in S$.

We conclude that
\begin{align*}
(e^{-\tau \Delta} \chi_S - \chi_S)_1 &\geq \left(e^{-\tau \Delta} \chi_{S_1} - \chi_{S_1}\right)_1 \\
&\geq  |(\kappa_S^{1,r})_1| \tau - \frac12 \|(\Delta')^2 \chi_{S_1}\|_{\mathcal{V},\infty} \tau^2,
\end{align*}
hence
\[
(e^{-\tau \Delta} \chi_S - \chi_S)_1 \geq \frac12 \Leftrightarrow \tau \in [\tau_1, \tau_2],
\]
which proves the result for the case in which $1\not\in S$.

To prove the desired statement if $1\in S$, we note that
\[
(e^{-\tau \Delta} - 1)(\chi_S+\chi_{S^c}) = 0,
\]
so the condition $(e^{-\tau \Delta}\chi_S - \chi_S)_1 < -\frac12$ is equivalent to $(e^{-\tau \Delta}\chi_{S^c} - \chi_{S^c})_1 > \frac12$. Recall that, in this case, $S_1 = \mathcal{N}_1 \cap S^c$, and the same derivation as above holds\footnote{In particular, carefully note that now $-\left(\Delta' \chi_{S_1}\right)_1 = (\kappa_S^{1,r})_1 = |(\kappa_S^{1,r})_1|$ holds.}, since $1\not\in S^c$, with the exception that the admissible range of $\tau$ becomes the open interval $(\tau_1, \tau_2)$. This is because, by our definition of the \ref{alg:MBO} algorithm, the thresholding operator thresholds the $\frac12$-level set to $1$.

\end{proof}

In the remainder of this section we  determine some conditions under which the requirement $|(\kappa_S^{1,r})_1|^2 > \|(\Delta')^2 \chi_{S_1}\|_{\mathcal{V},\infty}$ in Theorem~\ref{thm:localtau} is satisfied. To this end, define the reduced degrees, for $i\in V$, as
\begin{equation}\label{eq:reduceddegrees}
d'_i := \sum_{j\in S_1} \omega_{ij}.
\end{equation}

\begin{lemma}\label{lem:gapcondition}
$|(\kappa_S^{1,r})_1|^2 > \|(\Delta')^2 \chi_{S_1}\|_{\mathcal{V},\infty}$ if and only if
\begin{equation}\label{eq:gapcondition}
d_1^{-2r} (d_1')^2 > \underset{i\in \overline{S_1}}\max\, d_i^{-r} \left|-d_i^{1-r} d_i' - \sum_{j\in S_1} d_j^{1-r} \omega_{ij} + \sum_{k\in \overline{S_1}} d_k^{-r} d_k' \omega_{ik}\right|.
\end{equation}
\end{lemma}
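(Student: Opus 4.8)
The statement is an equivalence between two scalar inequalities, so the plan is to evaluate each side in closed form and check that they coincide with the two sides of \eqref{eq:gapcondition}. The left-hand side is the clean part. By \eqref{eq:DeltaChi} we have $\kappa_S^{1,r}=\Delta\chi_S$, so at node $1$ the curvature is $(\kappa_S^{1,r})_1 = d_1^{-r}\sum_{j\in S^c}\omega_{1j}$ when $1\in S$ and $(\kappa_S^{1,r})_1 = -d_1^{-r}\sum_{j\in S}\omega_{1j}$ when $1\notin S$. In both cases the sum effectively runs over $\mathcal{N}_1$ (since $\omega_{1j}=0$ off $\mathcal{N}_1$), hence over $S_1$, so by the definition \eqref{eq:reduceddegrees} of the reduced degree one gets $|(\kappa_S^{1,r})_1| = d_1^{-r}d_1'$ and therefore $|(\kappa_S^{1,r})_1|^2 = d_1^{-2r}(d_1')^2$. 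This produces exactly the left-hand side of \eqref{eq:gapcondition}; the two cases $1\in S$ and $1\notin S$ collapse precisely because $S_1$ and $|(\kappa_S^{1,r})_1|$ were set up symmetrically.

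For the right-hand side, the first observation is that $\chi_{S_1}\in\mathcal{V}_1$ and $\Delta'$ maps $\mathcal{V}_1$ into itself, so $(\Delta')^2\chi_{S_1}$ is supported on $\overline{S_1}$ and thus $\|(\Delta')^2\chi_{S_1}\|_{\mathcal{V},\infty}=\max_{i\in\overline{S_1}}|((\Delta')^2\chi_{S_1})_i|$. I would then compute $(\Delta')^2\chi_{S_1}$ in two steps. Writing $\bar d_i := \sum_{j\in\overline{S_1}}\omega_{ij}=d_i'+\omega_{i1}$ for the degree of $i$ in the subgraph induced by $\overline{S_1}$, the first application has the compact closed form $(\Delta'\chi_{S_1})_i = d_i^{-r}\big(\bar d_i (\chi_{S_1})_i - d_i'\big)$, which evaluates to $-d_1^{-r}d_1'$ at $i=1$ and to $d_i^{-r}\omega_{i1}$ for $i\in S_1$. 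Applying $\Delta'$ a second time and again splitting each sum into the contribution of the single node $1$ (where $\chi_{S_1}$ vanishes) and of the nodes of $S_1$ yields an explicit value of $((\Delta')^2\chi_{S_1})_i$ at each $i\in\overline{S_1}$; factoring out $d_i^{-r}$ and collecting terms gives the expression inside the maximum in \eqref{eq:gapcondition}. Taking the maximum over $i\in\overline{S_1}$ identifies $\|(\Delta')^2\chi_{S_1}\|_{\mathcal{V},\infty}$ with the right-hand side, and combining this with the first step turns $|(\kappa_S^{1,r})_1|^2 > \|(\Delta')^2\chi_{S_1}\|_{\mathcal{V},\infty}$ into \eqref{eq:gapcondition}, proving the equivalence.

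The main obstacle is entirely the bookkeeping in the second application of $\Delta'$. One must keep careful track of the distinction between the full-graph degree $d_i$ (which enters only through the prefactor $d_i^{-r}$ and through the diagonal of $\Delta'$) and the local degrees $d_i'$ and $\bar d_i$, and of the special status of node $1$, which is the unique neighbour of the $S_1$-nodes lying outside $S_1$ and at which the intermediate function takes the value $-d_1^{-r}d_1'$ rather than $d_i^{-r}\omega_{i1}$. No analytic input beyond the explicit definition of $\Delta'$ is required: once the intermediate identity $(\Delta'\chi_{S_1})_i = d_i^{-r}\big(\bar d_i(\chi_{S_1})_i - d_i'\big)$ is in hand, the lemma reduces to a direct, if somewhat lengthy, expansion.
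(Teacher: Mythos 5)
Your treatment of the left-hand side is correct and is exactly what the paper does: $|(\kappa_S^{1,r})_1| = d_1^{-r}d_1'$ in both cases $1\in S$ and $1\notin S$, and the reduction of $\|(\Delta')^2\chi_{S_1}\|_{\mathcal{V},\infty}$ to a maximum over $\overline{S_1}$ is also fine. Your overall strategy --- evaluate $(\Delta')^2\chi_{S_1}$ in closed form and compare --- is the same as the paper's, which merely organizes the computation as squaring the matrix $L'$ of $\Delta'$ restricted to $\overline{S_1}$ rather than applying the operator twice; that difference is cosmetic.

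The genuine gap is in your intermediate identity $(\Delta'\chi_{S_1})_i = d_i^{-r}\bigl(\bar d_i(\chi_{S_1})_i - d_i'\bigr)$ with $\bar d_i = \sum_{j\in\overline{S_1}}\omega_{ij}$. This is a faithful reading of the displayed definition of $\Delta'$, but it is not the operator that \eqref{eq:gapcondition} (or Theorem~\ref{thm:localtau}, or the paper's own proof) actually uses: the intended $\Delta'$ is the Dirichlet restriction of the full Laplacian, with the \emph{full} degree $d_i$ on the diagonal --- this is what makes $\Delta' v = \Delta v$ on $\mathcal{V}_1$ and what the paper's matrix $L'$ with $|L'_{ii}| = d_i^{1-r}$ encodes. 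Concretely, for $i\in S_1$ the intended intermediate value is $d_i^{-r}(d_i - d_i')$, not your $d_i^{-r}\omega_{i1}$; these agree only when $i$ has no neighbours outside $\overline{S_1}$. Carrying your version through the second application produces the terms $-d_i^{-r}\bar d_i d_i'$ and $-\sum_{k\in S_1}d_k^{-r}\bar d_k\,\omega_{ik}$ where \eqref{eq:gapcondition} has $-d_i^{1-r}d_i'$ and $-\sum_{j\in S_1}d_j^{1-r}\omega_{ij}$, so the promised ``collecting terms'' will not close. Replacing the intermediate identity by $(\Delta'\chi_{S_1})_i = d_i^{-r}\bigl(d_i(\chi_{S_1})_i - d_i'\bigr)$ fixes this and reproduces \eqref{eq:gapcondition} exactly at $i=1$. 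One caveat so that you do not chase a phantom error: at nodes $i\in S_1$ the honest expansion also produces the diagonal contribution $(L')_{ii}^2$, i.e.\ an extra $+\,d_i^{2-r}$ inside the absolute value, which is absent from \eqref{eq:gapcondition}; the paper's proof loses this term when it splits $(L'^2)_{ij}$ as if $i\neq j$ and then sums over $j\in S_1\ni i$, so that residual mismatch is the paper's, not yours.
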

\begin{proof}
Consider the $|V| \times |V|$ matrix corresponding to $\Delta'$. After possibly relabeling the nodes, it can be written as $\left(\begin{array}{cc} L' & 0\\ 0 & 0\end{array}\right)$, where $L'$ is the $|\overline{S_1}| \times |\overline{S_1}|$ matrix with entries
\[
L'_{ij} = d_i^{-r} \begin{cases} -d_i & \text{if } i=j,\\ \omega_{ij} & \text{if } i\neq j.\end{cases}
\]
Then
\begin{align*}
(L'^2)_{ij} &= \sum_{k\in \overline{S_1}} (L')_{ik} (L')_{kj}\\
&= (L')_{ii} (L')_{ij} + (L')_{ij} (L')_{jj} + \sum_{k\in \overline{S_1}\setminus\{i, j\}}(L')_{ik} (L')_{kj}\\
&= d_i^{-r} \left[-(d_i^{1-r} + d_j^{1-r}) \omega_{ij} + \sum_{k\in \overline{S_1}\setminus\{i, j\}} d_k^{-r} \omega_{ik} \omega_{jk}\right].
\end{align*}
Thus $\left((\Delta')^2 \chi_{S_1}\right)_i = 0$ if $i\not\in \overline{S_1}$, and, for $i \in \overline{S_1}$, we have
\begin{align*}
\left((\Delta')^2 \chi_{S_1}\right)_i &= d_i^{-r} \sum_{j\in S_1} \left[-(d_i^{1-r} + d_j^{1-r}) \omega_{ij} + \sum_{k\in \overline{S_1}\setminus\{i, j\}} d_k^{-r} \omega_{ik} \omega_{jk}\right]\\
&= d_i^{-r} \sum_{j\in S_1} \left[-(d_i^{1-r} + d_j^{1-r}) \omega_{ij} + \sum_{k\in \overline{S_1}} d_k^{-r} \omega_{ik} \omega_{jk}\right]\\
&= d_i^{-r} \left[ -d_i^{1-r} d'_i - \sum_{j\in S_1} d_j^{1-r} \omega_{ij} + \sum_{k\in\overline{S_1}} d_k^{-r} d'_k \omega_{ik}\right],
\end{align*}
where, for the second equality, we used that $\omega_{ii}=\omega_{jj}=0$.

From \eqref{eq:dvgnu} and the definition of $S_1$, we find
\[
|(\kappa_S^{1,r})_1|^2 = d_1^{-2r} \sum_{j,k \in S_1} \omega_{1j} \omega_{1k} = d_1^{-2r} d_1'^2.
\]
\end{proof}

\begin{corol}\label{cor:r=1}
Let $r=1$ and $d'_1 >0$. If there exists an $0\leq \e<1$, such that, for all $i \in S_1$,
\begin{equation}\label{eq:condr=1}
\frac{d'_i}{d_i} \leq \e \frac{d'_1}{d_1} \quad \text{and} \quad \frac{\omega_{i1}}{d_i} < (1-\e^2) \frac{d'_1}{d_1},
\end{equation}
then condition \eqref{eq:gapcondition} is satisfied, and there exist $0<\tau_1< \tau_2$ as in Theorem~\ref{thm:localtau}.

If the first condition in \eqref{eq:condr=1} is satisfied with
\[
0\leq \e < \frac12 \left(\sqrt{5}-1\right) \approx 0.618,
\]
then the second condition in \eqref{eq:condr=1} can be replaced by the condition that, for all $i\in S_1$, $\omega_{i1} \leq d'_i$.
\end{corol}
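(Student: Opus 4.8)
The plan is to reduce the statement to the inequality \eqref{eq:gapcondition} of Lemma~\ref{lem:gapcondition}, verify it under the hypotheses \eqref{eq:condr=1}, and then derive the simplified criterion from the stated bound on $\e$. Throughout I write $\beta := d_1'/d_1 > 0$, so that for $r=1$ the left-hand side of \eqref{eq:gapcondition} is $d_1^{-2r}(d_1')^2 = \beta^2$.

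First I would put the right-hand side of \eqref{eq:gapcondition} into a form adapted to the hypotheses. Setting $r=1$ and using the identity $\sum_{j\in S_1}\omega_{ij} = d_i'$ from \eqref{eq:reduceddegrees}, the expression inside the absolute value at a node $i\in\overline{S_1}$ becomes $-2d_i' + \sum_{k\in\overline{S_1}} d_k^{-1} d_k'\,\omega_{ik}$. Splitting the sum over $\overline{S_1} = S_1\cup\{1\}$ into its $k=1$ term, equal to $\beta\,\omega_{i1}$ since $d_1^{-1}d_1' = \beta$, and its $k\in S_1$ terms, reduces the quantity to be bounded to $d_i^{-1}\bigl| -2d_i' + \beta\,\omega_{i1} + \sum_{k\in S_1} d_k^{-1} d_k'\,\omega_{ik}\bigr|$.

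The core estimate, carried out for $i\in S_1$, controls the two positive contributions separately. The first hypothesis $d_k'/d_k\le\e\beta$ gives $\sum_{k\in S_1} d_k^{-1}d_k'\,\omega_{ik}\le\e\beta\sum_{k\in S_1}\omega_{ik} = \e\beta\,d_i'\le\e^2\beta^2 d_i$, the last step using $d_i'\le\e\beta\,d_i$; the second hypothesis $\omega_{i1}/d_i < (1-\e^2)\beta$ gives $\beta\,\omega_{i1} < (1-\e^2)\beta^2 d_i$. Adding these and invoking the identity $(1-\e^2)+\e^2 = 1$ bounds the positive terms by exactly $\beta^2 d_i$, which is the point of the exponent $2$ in the factor $(1-\e^2)$: the two pieces recombine to the full left-hand side $\beta^2$. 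For the simplified criterion of the second paragraph, I would note that $\omega_{i1}\le d_i'$ together with the first hypothesis yields $\omega_{i1}/d_i\le d_i'/d_i\le\e\beta$, so it suffices that $\e\beta<(1-\e^2)\beta$, i.e. $\e<1-\e^2$; since the positive root of $\e^2+\e-1=0$ is $\tfrac12(\sqrt5-1)$, the stated bound $\e<\tfrac12(\sqrt5-1)$ forces $\e<1-\e^2$ and hence the second inequality in \eqref{eq:condr=1}.

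The main obstacle is that \eqref{eq:gapcondition} requires a two-sided bound: the estimate above controls only the positive excursion of the bracketed expression, whereas one must also bound its negative part, dominated by $-2d_i'$, uniformly over all $i\in\overline{S_1}$. This is most delicate at the central node $i=1$ itself, where $d_1'/d_1 = \beta$ is not small and the first hypothesis does not apply, so the cancellation between $-2d_i'$ and the positive terms must be made precise there. Handling the node $1$ separately from its neighbours in $S_1$, and showing that the full absolute value --- not merely its positive part --- stays below $\beta^2$, is the step I expect to require the most care.
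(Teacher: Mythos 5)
Your estimate of the positive contributions is exactly the paper's: the $k\in S_1$ part of the sum is bounded by $\e^2(d_1'/d_1)^2$ by using the first hypothesis twice, the $k=1$ term by $(1-\e^2)(d_1'/d_1)^2$ by using the second, and the reduction of the simplified criterion to $\e<1-\e^2$, hence to $\e<\tfrac12(\sqrt5-1)$, is also identical. But the obstacle you flag at the end is not a deferrable technicality --- it is the crux of the claim, and under the literal reading of \eqref{eq:gapcondition} it cannot be overcome. At $i=1$ (which belongs to $\overline{S_1}$ and so enters the maximum) the bracketed expression reads $-2d_1'/d_1+\sum_{k\in S_1}\frac{d_k'}{d_k}\frac{\omega_{1k}}{d_1}$, and the sum is at most $\e\bigl(d_1'/d_1\bigr)^2$ by the first hypothesis; hence the absolute value is at least $2\frac{d_1'}{d_1}-\e\bigl(\frac{d_1'}{d_1}\bigr)^2$, which strictly exceeds $\bigl(\frac{d_1'}{d_1}\bigr)^2$ because $d_1'\leq d_1$ and $\e<1$. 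So the two-sided bound you hoped to establish with more care is actually false.

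The paper's proof closes this not with a sharper estimate but with a cancellation: it evaluates the two degree terms at $r=1$ as $\frac{d_i'}{d_i}-\frac{d_i'}{d_i}=0$, so that the quantity inside the absolute value becomes the nonnegative sum $\sum_{k\in\overline{S_1}}\frac{d_k'}{d_k}\frac{\omega_{ik}}{d_i}$ and only your upper estimate is needed (the case $i=1$ is then harmless, since $\omega_{11}=0$ leaves only the contribution $\e(d_1'/d_1)^2<(d_1'/d_1)^2$). Note that this assigns opposite signs to two terms that both carry a minus sign in the display of \eqref{eq:gapcondition}: the corollary's proof and the worked example of Section~\ref{sec:squaregrid} follow the cancellation version, while the display in Lemma~\ref{lem:gapcondition} would give $-2d_i'/d_i$. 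The missing idea in your proposal is therefore this cancellation --- equivalently, the observation that the relevant quantity is nonnegative, so no lower bound is required; taken at face value, \eqref{eq:gapcondition} is simply not implied by \eqref{eq:condr=1}, and you were right to sense that node $1$ is where a literal argument breaks down.
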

Note, by \eqref{eq:dvgnu} and \eqref{eq:reduceddegrees}, that the conditions in \eqref{eq:condr=1} can be rewritten as
\[
(\kappa_{S_1}^{1,1})_i - 1 \geq \e (\kappa_{S_1}^{1,1})_1 \quad \text{and} \quad (\kappa_{\{1\}}^{1,1})_i > (1-\e^2) (\kappa_{S_1}^{1,1})_1,
\]
for all $i\in S_1$.

\begin{proof}[Proof of Corollory~\ref{cor:r=1}]
For $r=1$, we compute, for $i\in \overline{S_1}$,
\begin{align*}
&\hspace{0.5cm} d_i^{-r} \left|-d_i^{1-r} d_i' - \sum_{j\in S_1} d_j^{1-r} \omega_{ij} + \sum_{k\in \overline{S_1}} d_k^{-r} d_k' \omega_{ik}\right| \\
&  = \left| \frac{d'_i}{d_i} - \frac{d'_i}{d_i} + \sum_{k\in \overline{S_1}} \frac{d_k'}{d_k} \frac{\omega_{ik}}{d_i}\right| \\
&  = \sum_{k\in \overline{S_1}} \frac{d_k'}{d_k} \frac{\omega_{ik}}{d_i},
\end{align*}
hence condition \eqref{eq:gapcondition} becomes $\displaystyle
\left(\frac{d'_1}{d_1}\right)^2 > \underset{i\in\overline{S_1}}\max\, \sum_{k\in \overline{S_1}} \frac{d_k'}{d_k} \frac{\omega_{ik}}{d_i}$.
If the first condition in \eqref{eq:condr=1} is satisfied, we have
\begin{align*}
\sum_{k\in \overline{S_1}} \frac{d_k'}{d_k} \frac{\omega_{ik}}{d_i} &= \frac{d'_1}{d_1} \frac{\omega_{i1}}{d_i} + \sum_{k\in S_1} \frac{d_k'}{d_k} \frac{\omega_{ik}}{d_i} \leq  \frac{d'_1}{d_1} \frac{\omega_{i1}}{d_i} + \e \frac{d'_1}{d_1} \frac{d'_i}{d_i}\\
&= \frac{d'_1}{d_1} \left[\frac{\omega_{i1}}{d_i}  + \e \frac{d'_i}{d_i}\right].
\end{align*}
Since $d'_1>0$, condition \eqref{eq:gapcondition} reduces to
$\displaystyle
\underset{i\in\overline{S_1}}\max\, \left[\frac{\omega_{i1}}{d_i}  + \e \frac{d'_i}{d_i}\right] < \frac{d'_1}{d_1}.
$
If the maximum is achieved at $i=1$, then $\displaystyle \frac{\omega_{i1}}{d_i}  + \e \frac{d'_i}{d_i} = \e \frac{d'_1}{d_1} < \frac{d'_1}{d_1}$. If, on the other hand, the maximum is achieved at some $i\in S_1$, then $\displaystyle \frac{\omega_{i1}}{d_i}  + \e \frac{d'_i}{d_i} \leq \frac{\omega_{i1}}{d_i}  + \e^2 \frac{d'_1}{d_1} <  \frac{d'_1}{d_1}$. Here, we first used the first condition from \eqref{eq:condr=1}, and then the second. Combined with Theorem~\ref{thm:localtau} and Lemma~\ref{lem:gapcondition}, this proves the first claim.

\smallskip

Now assume, instead of the second condition in \eqref{eq:condr=1}, that $\omega_{i1} \leq d'_i$ for all $i\in S_1$. Then $\frac{\omega_{i1}}{d_i} \leq \frac{d'_i}{d_i} \leq \e \frac{d'_1}{d_1}$. Hence, if $0<\e < 1-\e^2$, the second condition in \eqref{eq:condr=1} is satisfied. This requirement is met, if $0<\e < \frac12 \left(\sqrt5-1\right)$.
\end{proof}

%It is worthwhile to understand the conditions in the corollary. The ratio $\frac{d'_i}{d_i}$ is a measure of the relative strength of connection of node $i$ with the set $S_1$ compared to all its connections in $G$. The first condition in \eqref{eq:condr=1} thus requires node 1 to be a node with comparatively large relative connection strength with $S_1$, compared to the other nodes in $\overline{S_1}$.
%For $i\in S^c$, we interpret this condition as requiring enough mass to diffuse to node $1$ for it to pass the threshold value $\frac 1 2$, before the mass diffuses into $S^c$.
%The second condition in \eqref{eq:condr=1} encodes a requirement, opposite in spirit. This can be understood by rewriting
%\[
%\frac{\omega_{i1}}{d_i} = \frac{\omega_{i1}}{\omega_{i1}+\sum_{j\in V\setminus\{1\}} \omega_{ij}} = \frac1{1+\sum_{j\in V\setminus\{1\}} \frac{\omega_{ij}}{\omega_{i1}}},
%\]
%or considering the alternative second condition $\omega_{i1} \leq d'_i$. Both these show that the strength of the connection of node $i$ with node $1$, should be small compared to the other connections node $i$ has in $S_1$.

It is worthwhile to understand the conditions in the corollary. Adding the two conditions in \eqref{eq:condr=1} gives $\frac{d'_i + \omega_{i1}}{d_i} \leq (1 + \e - \e^2) \frac{d'_1}{d_1}$. The ratio $\frac{d'_i+\omega_{i1}}{d_i}$ is a measure of the relative strength of connection of node $i$ within the set $\overline S_1$ compared to all its connections in $G$. Similarly $\frac{d'_1}{d_1}$ is the relative strength of connection of node $1$ within $\overline S_1$ (or, equivalently, within $S_1$), compared to all its connections in $G$. The conditions in \eqref{eq:condr=1} thus require node 1 to be a node with comparatively large relative connection strength within $\overline S_1$, compared to the other nodes in $\overline{S_1}$. This will allow enough mass to diffuse to or away from node $1$ (depending on whether or not $1\in S$), for it to pass the threshold value $\frac12$, without too much of the locally available mass diffusing to other nodes.

Some examples in Section~\ref{sec:examples} further examines the conditions in \eqref{eq:gapcondition} and \eqref{eq:condr=1}.

\begin{remark}
One can interpret the condition
\[
|(\kappa_S^{1,r})_1|^2 > \|(\Delta')^2 \chi_{S_1}\|_{\mathcal{V},\infty}
\]
from Theorem~\ref{thm:localtau}, in terms of the spectral radius of $\Delta'$ (similar to \cite[Equation (8.7)]{Chung:1997}). We compute
\begin{align*}
\rho(\Delta') &= \sup_{u \in \mathcal{V}\setminus \{0\} } \frac{ \langle u, \Delta' u\rangle_\mathcal V }{ \| u \|_\mathcal{V}} = \frac12 \sup_{u \in \mathcal{V}\setminus \{0\} } \frac{\sum_{i,j\in \overline{S_1}} \omega_{ij} (v_i-v_j)^2}{\sum_{i\in \overline{S_1}} d_i^r v_i^2}\\
&\leq \sup_{u \in \mathcal{V}\setminus \{0\} } \frac{\sum_{i,j\in \overline{S_1}} \omega_{ij} (v_i^2+v_j^2)}{\sum_{i\in \overline{S_1}} d_i^r v_i^2} = 2 \sup_{u \in \mathcal{V}\setminus \{0\} } \frac{\sum_{i,j\in \overline{S_1}} \omega_{ij} v_i^2}{\sum_{i\in \overline{S_1}} d_i^r v_i^2}\\
&= 2 \sup_{u \in \mathcal{V}\setminus \{0\} } \frac{\sum_{i\in \overline{S_1}} \bar d_i v_i^2}{\sum_{i\in \overline{S_1}} d_i^r v_i^2} \leq 2 (\bar d_-)^{-r} \bar d_+,
\end{align*}
where
\[
\bar d_i := \sum_{j\in \overline{S_1}} \omega_{ij}, \quad \bar d_+ := \underset{i\in\overline{S_1}}\max\, \bar d_i, \quad \bar d_- := \underset{i\in\overline{S_1}}\min\, \bar d_i.
\]
Because $\left((\Delta')^2 \chi_{S_1}\right)_i = 0$ if $i\not\in \overline{S_1}$, it is straightforward to adapt the proof of Lemma~\ref{lem:normEquiv}, to find $(\bar d_-)^\frac{r}2 \|(\Delta')^2 \chi_{S_1} \|_{\mathcal V,\infty} \leq \|(\Delta')^2 \chi_{S_1} \|_{\mathcal V}$.

Combining these results, the condition
\[
|(\kappa_S^{1,r})_1|^2 > \|(\Delta')^2 \chi_{S_1}\|_{\mathcal{V},\infty}
\]
is satisfied if
\[
d_1^{-2r} (d'_1)^2 > 4 (\bar d_-)^{-\frac{r}2} (\bar d_-)^{-2r} (\bar d_+)^2 \sqrt{\mathrm{vol} \ S_1},
\]
or, equivalently, if
\[
\frac14 \left(\frac{d'_1}{\bar d_+}\right)^2 \left(\frac{\bar d_-}{d_1}\right)^{2r} > (\bar d_-)^{-\frac{r}2} \sqrt{\mathrm{vol}\ S_1}.
\]
Using $\mathrm{vol}\ S_1= \sum_{i\in S_1} d_i^r \geq |S_1| d_-^r$, we can deduce the stronger sufficient condition
\[
\frac14 \left(\frac{d'_1}{\bar d_+}\right)^2 \left(\frac{\bar d_-}{d_1}\right)^{2r} > \left(\frac{d_-^r}{\bar d_-}\right)^{-\frac{r}2} |S_1|.
\]
\end{remark}

\section{Allen-Cahn equation on graphs}\label{sec:AC}

In this section, we investigate the Allen-Cahn equation on graphs. A short overview of the continuum Allen-Cahn equation can be found in Section~\ref{sec:continuumAC} in Appendix~\ref{sec:continuum}.

    We propose the following Allen-Cahn equation (ACE) on graphs, for all $i\in V$:
    \begin{equation}\label{eq:ACEe}\tag{ACE$_\e$}
         \left\{\begin{array}{clc} \dot{u}_i &= -(\Delta u)_i - \frac1\e d_i^{-r} W'(u_i) & \text{for } t>0,\\
         u_i & =(u_0)_i &\text{at } t=0,
         \end{array}\right.
    \end{equation}
    for a given initial condition $u_0 \in \mathcal{V}$ and $\e>0$. Here $W\in C^2(\R)$ is a double well potential. For definiteness we set $W$ to be the standard double well potential $W(u) = (u+1)^2 (u-1)^2$, hence $W'(u)=4u(u^2-1)$ and $W$ has two stable minima at the wells at $u=\pm 1$ and an unstable local maximum at $u=0$.
Recall that the sign convention for the Laplacian is opposite to the one used in the continuum literature. Note that for $\e $ sufficiently small, this system has $3^{n}$ equilibria, of which $2^{n}$ are stable.

   The \ref{alg:MBO} algorithm is closely related to time-splitting methods applied to the Allen-Cahn evolution \eqref{eq:ACEe}. The diffusion step is precisely the time evolution with respect to the first term of \eqref{eq:ACEe} and the thresholding step is the asymptotic behavior of evolution with respect to the second term of \eqref{eq:ACEe}.

    The case $V=\mathbb{Z}^d$ with weights $\omega_{ij}=\omega(\|i-j\|)$ was considered in \cite{BatesChmaj99}, where it is seen as an approximation to the Ising model, and stationary solutions and traveling waves are constructed for $\e$ small enough. The authors note that, when $\omega_{ij}$ corresponds to nearest-neighbors, this equation is known as the discrete Nagumo equation, which is a simplified model of neural networks. In this context, \cite{HupkesPelinovskySandstede11} considered the Nagumo equation in $\mathbb{Z}^1$ and derived the existence of traveling waves. In general, we are not aware of any previous works where \eqref{eq:ACEe} is considered for an arbitrary weighted graph $(V,E,\omega_{ij})$. It would be interesting to see whether the analysis in \cite{BatesChmaj99} can can be extended to use \eqref{eq:ACEe} to study phase transitions in general graphs, a topic of interest in other areas of mathematics \cite{Lyons00}.

    Just as in the continuum case, we arrive at \eqref{eq:ACEe} as the gradient flow given by the graph Ginzburg-Landau functional,\\ $GL_\e(u) \colon \mathcal{V} \to \mathbb R$,
    \begin{equation}\label{eq:GL functional}
         \tag{GL$_\e$}
         GL_\e(u) := \frac12 \|\nabla u\|_{\mathcal{E}}^2 + \frac1\e \langle D^{-r} W\circ u, 1\rangle_{\mathcal{V}},
    \end{equation}
    where $(D^{-r} W\circ u)_i = d_i^{-r} W(u_i)$, and whose first variation is given by
    \[
         \left.\frac{d}{dt} GL_\e(u+tv) \right|_{t=0} = \langle \Delta u, v\rangle_{\mathcal{V}} + \frac1\e \langle D^{-r} W'(u), v\rangle_{\mathcal{V}}.
    \]
    The factor $d_i^{-r}$ in the potential term is needed to cancel the factor $d_i^r$ in the $\mathcal{V}$-inner product. Equation \eqref{eq:ACEe} is then the $\mathcal{V}$-gradient flow associated with \eqref{eq:GL functional}.

    Recall that the Laplacian $\Delta$ also depends on $r$. In fact, the equation in \eqref{eq:ACEe} can be rewritten as
    \[
         d_i^r \dot u_i = -\sum_{j\in V} \omega_{ij} (u_i-u_j) - \frac1\e W'(u_i),
    \]
    showing that the factor $d_i^r$ can be interpreted as a node-dependent time rescaling.

    By standard ODE arguments and the smoothness of the right hand side of \eqref{eq:ACEe}, for each $\e>0$ a unique $C^1$ solution to  \eqref{eq:ACEe} exists for all $t>0$.

    This continuum case (see Appendix~\ref{sec:continuumAC}) suggests an approach for finding a valid notion of mean curvature (and its flow) for graphs: Take initial data %$u_i(0) \in \{-1,1\}$, for all $i \in V$,
    $u(0) = \chi_S-\chi_{S^c}$, for some node set $S\subset V$, and consider the corresponding solution $u^\e(t)$ to \eqref{eq:ACEe}, for all times $t>0$. The question is whether the limit
         \begin{equation*}
	          \bar u_i(t):= \lim \limits_{\e \to 0_+} u^\e_i(t)	
         \end{equation*}
         exists. Even if it does, it is unlikely that $\bar u$ is of the form $\chi_{S(t)}-\chi_{S(t)^c}$, which can be interpreted as a binary indicator function for some evolving set $S(t)$, for all times $t>0$, but there may be an approximate phase separation: $\bar u_i(t) \in [-1-\delta, -1+\delta] \cup [1-\delta, 1+\delta]$, for some small $\delta>0$. Is there a way to characterize the evolution of the ``interface'' between the two level sets of $\bar u_i(t)$?

    However, a little analysis shows the above approach is rather na\"ive. Indeed, unlike in the continuum case, the graph Laplacian of the indicator function of a set $S \subset V$ is always a well-defined bounded function (in any norm). Thus, for small $\e$ the potential term in the equation will dominate the dynamics, and {\it pinning} or {\it freezing} will occur, as proven in Theorem~\ref{thm:pinningAC}. This is the dynamics in which the sign of the value of $u$ on each node is fixed by the sign of the initial value, and $u$ at each node just settles into the corresponding well of $W$.

	As discussed at the start of Section~\ref{sec:MCF}, the question how to connect the sequence of sets evolving by graph mean curvature to the super (or sub) level sets $\{ i\in V\colon u^\e_i(t)>0\}$ for solutions of \eqref{eq:ACEe}, is still open. See also Question~\ref{ques:ACMCF}.
	
	\begin{remark} Note that in the \ref{alg:MBO} algorithm, the values of $u$ are reinitialized in every iteration to 0 or 1. Our choice of the double well potential $W$ in \eqref{eq:ACEe} has two equilibria corresponding to the level sets for $\pm 1$. Correspondingly, the unstable equilibrium for  \ref{alg:MBO} corresponds to the 1/2 level set, while for \eqref{eq:ACEe} it corresponds to the 0 level set. This agrees with the now standard notations for Allen-Cahn and MBO.
\end{remark}
	
 	Below, we show that for all $\e$ below a finite $\e_0>0$ the functions $u^\e_i(t)$ do not change sign as $t$ varies, so that pinning occurs. Recall that a set which contains the forward orbit of each of its elements is called \emph{positively invariant}, and that the number of nodes in the graph $G$ is $|V| = n$.
	\begin{lemma} \label{lem:positiveinvariant}
		 Consider the set $S := \{ u\in \mathcal{V} \colon \|u\|_{\mathcal{V}}^2 \leq \frac{17}{4} n \, d_+^r \}$ and let $u(t)$ be the solution to \eqref{eq:ACEe} for a given $\e>0$. Then $t \mapsto \|u(t)\|_{\mathcal{V}}^2$ is decreasing at each $t$ such that $u(t) \in S^c$. As a consequence, the set $S$ is positively invariant and every trajectory of \eqref{eq:ACEe} enters $S$ in finite time.
	\end{lemma}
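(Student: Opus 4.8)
The plan is to differentiate $t\mapsto\|u(t)\|_{\mathcal V}^2$ along the flow, split off a manifestly nonpositive diffusion contribution, and reduce the entire statement to a pointwise algebraic inequality for the quartic potential term. With $W(u)=(u^2-1)^2$, so that $W'(u)=4u(u^2-1)$, and using the self-adjointness identity $\langle u,\Delta u\rangle_{\mathcal V}=\|\nabla u\|_{\mathcal E}^2$ together with equation \eqref{eq:ACEe}, I compute
\begin{align*}
\tfrac12\frac{d}{dt}\|u\|_{\mathcal V}^2 &= \langle u,\dot u\rangle_{\mathcal V} = -\langle u,\Delta u\rangle_{\mathcal V}-\frac1\e\sum_{i\in V}u_i W'(u_i)\\
&= -\|\nabla u\|_{\mathcal E}^2 - \frac{4}{\e}\sum_{i\in V} u_i^2(u_i^2-1).
\end{align*}
The structural point is that the weights $d_i^r$ in $\langle u,\dot u\rangle_{\mathcal V}$ cancel exactly against the factor $d_i^{-r}$ in the potential term, leaving an \emph{unweighted} quartic sum. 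Since $\|\nabla u\|_{\mathcal E}^2\ge 0$ only helps, it suffices to show $\sum_{i} u_i^2(u_i^2-1)>0$ whenever $u\in S^c$.

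For this I would set $A:=\sum_{i\in V} u_i^2$ and bound the quartic part below by Cauchy--Schwarz (power mean), $\sum_i u_i^4\ge \tfrac1n A^2$, so that $\sum_i u_i^2(u_i^2-1)\ge \tfrac{A}{n}(A-n)$, which is strictly positive as soon as $A>n$. It then remains to convert the hypothesis $u\in S^c$, stated in the weighted norm, into a lower bound on $A$: since $d_i^r\le d_+^r$ for every $i$, one has $\|u\|_{\mathcal V}^2=\sum_i d_i^r u_i^2\le d_+^r A$, hence on $S^c$, $A\ge \|u\|_{\mathcal V}^2/d_+^r>\tfrac{17}{4}n>n$. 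This yields $\tfrac12\frac{d}{dt}\|u\|_{\mathcal V}^2\le -\tfrac{4}{\e}\tfrac{A}{n}(A-n)<0$ on $S^c$, which is the asserted decrease. (Any threshold constant exceeding $1$ already forces strict decrease; the value $\tfrac{17}{4}$ simply provides a convenient margin.)

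For the consequences I would argue as follows. Positive invariance of $S$ follows from a standard barrier argument applied to $V(t):=\|u(t)\|_{\mathcal V}^2$: if a trajectory with $V(0)\le c:=\tfrac{17}{4}nd_+^r$ had $V(t_2)>c$ for some $t_2$, then with $t_*:=\sup\{t<t_2:V(t)=c\}$ the function $V$ would be strictly decreasing on $(t_*,t_2]$ (throughout which $u\in S^c$), forcing $V(t_2)<V(t_*)=c$, a contradiction made rigorous by continuity of $V$. For finite-time entry I would upgrade the strict decrease to a \emph{uniform} rate: on $S^c$ the map $A\mapsto \tfrac{A}{n}(A-n)$ is increasing and $A>\tfrac{17}{4}n$, so it is bounded below by its boundary value $\tfrac{221}{16}n$, giving $\frac{d}{dt}V\le -\delta$ with $\delta:=\tfrac{221n}{2\e}>0$ for as long as the trajectory stays outside $S$; hence $V(t)\le V(0)-\delta t$ reaches $c$ by time $t\le (V(0)-c)/\delta<\infty$. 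The one genuinely delicate point is precisely this quantitative step: one must verify that the decrease rate does not degenerate to $0$ anywhere on $S^c$ (it does not, being uniformly bounded away from $0$), since strict negativity alone would not preclude an infinitely slow approach to $\partial S$.
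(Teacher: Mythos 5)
Your proof is correct, and the core estimate is obtained by a genuinely different route from the paper's. Both arguments start identically: differentiate $\|u(t)\|_{\mathcal V}^2$ along the flow, observe that the $d_i^r$ weights cancel the $d_i^{-r}$ in the potential term, discard the nonpositive Dirichlet contribution, and reduce to showing $\sum_i u_i^2(u_i^2-1)>0$ on $S^c$. At that point the paper partitions the vertices into $A(t)=\{i: u_i^2\le 2\}$ and its complement, bounding the ``bad'' contribution by $|A(t)|\max_{x^2\le 2}x^2(1-x^2)=\tfrac14|A(t)|$ and the ``good'' one from below by $\sum_{A^c}u_i^2$, then balancing the two using $|A(t)|\le n$ and $\sum_i u_i^2>\tfrac{17}{4}n$; the constant $\tfrac{17}{4}$ is needed for this bookkeeping to close. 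You instead apply the power-mean inequality $\sum_i u_i^4\ge \tfrac1n\bigl(\sum_i u_i^2\bigr)^2$ to get the clean lower bound $\tfrac{A}{n}(A-n)$ with $A=\sum_i u_i^2$, which is positive as soon as $A>n$ — so your argument shows the stated threshold is far from sharp, and, more usefully, it is monotone in $A$, which immediately yields the uniform decrease rate $\tfrac{d}{dt}\|u\|_{\mathcal V}^2\le -\tfrac{221 n}{2\e}$ on $S^c$. The paper dismisses the invariance and finite-time-entry claims with ``the other statements now follow,'' whereas you correctly identify that finite-time entry requires exactly such a uniform (not merely strict) rate and supply it, together with a standard continuity/barrier argument for invariance. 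Everything checks out; your version is both more economical in the key inequality and more complete in the consequences.
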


	\begin{proof}
		 Define the set $A(t):= \{ i\in V \colon u_{i}^{2}(t) \leq 2 \}$.
		 We compute
	     \begin{align*}
	          \tfrac{d}{dt} \|u(t)\|_{\mathcal{V}}^2 &= 2 \left < u(t), \dot u(t) \right >_{\mathcal{V}} \\
	                       & = - 2 \| \nabla u(t) \|_{\mathcal{E}}^2 - \tfrac{8}{\e} \sum_{i\in V}u_i(t)^2(u_i(t)^2-1)\\
	                       & = -2\|\nabla u(t)\|_{\mathcal{E}}^2- \tfrac{8}{\e} \sum_{i\in A^c(t) } u_i(t)^{2} (u_i(t)^{2} -1 )\\
  &\hspace{0.6cm} + \tfrac{8}{\e}\sum_{i\in A(t) } u_i(t)^{2} (1 - u_i(t)^{2}  )  \\
	                       & < -\tfrac{8}{\e}\left ( \sum_{i\in A^c(t) } u_i(t)^{2} - \tfrac{ |A(t)|}{4}  \right ).
	     \end{align*}
	     The last inequality follows, since $u_i(t)^2 - 1 > 1$ for $i\in A^c(t)$, and $\max\{ x^2 (1-x^2)\colon x^2 \leq 2\} = \frac14$.	
	     Note that $\|u(t)\|_{\mathcal{V}}^2 \leq d_+^r \sum_{i\in V} u_i(t)^2$. Thus, if $u(t) \in S^c$, then $\sum_{i\in V} u_i(t)^2 > \frac{17}4 n$, and hence
         \begin{equation*}
              \sum_{i\in A^c(t) } u_i(t)^{2} =  \sum_{i\in V} u_i(t)^2-\sum_{i\in A(t) }u_i(t)^2 > \tfrac{17}{4}n - 4 |A(t)|.
         \end{equation*}
         Therefore,
         \begin{align*}
              \tfrac{d}{dt} \|u(t)\|_{\mathcal{V}}^2 & < -\tfrac{8}{\e}	\left ( \tfrac{17}{4}n - 4 |A(t)| - \tfrac{1}{4} |A(t)| \right ) < 0,
         \end{align*}
         where we have used that $|A(t)| \leq n$.
	     This shows  $\|u(t)\|_{\mathcal{V}}^2$ is decreasing in the region $S^{c}$, as desired. The other statements in the lemma now follow.
	\end{proof}

    \begin{theorem}\label{thm:pinningAC}
Assume $|u_i(0)| > 0$ for all $i\in V$. There exist an $\e_\rho$ and an $\e_\kappa$ (depending on the spectral radius of $\Delta$ via \eqref{eq:epsrho}, and on $\sup_{t\geq 0} \|\Delta u(t)\|_{\mathcal{V},\infty} < \infty$ via \eqref{eq:epskappa}, respectively), such that, if either $\e\leq\e_\rho$ or $\e \leq \e_\kappa$, then the solution $u(t)$ to \eqref{eq:ACEe} is such that $\text{sign}(u_i(t))$ is constant in time, for all $i \in V$.                 	
    \end{theorem}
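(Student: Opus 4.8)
The plan is a node-by-node barrier (first-crossing) argument: once $\e$ is small the strong restoring term $-\frac1\e d_i^{-r}W'(u_i)$ prevents any coordinate from ever reaching zero, so all signs are frozen. The only non-elementary input is a uniform-in-time bound on the diffusion term, which I would extract from Lemma~\ref{lem:positiveinvariant}. By that lemma, $\|u(t)\|_{\mathcal{V}}^2 \le R^2 := \max\{\|u(0)\|_{\mathcal{V}}^2,\ \frac{17}{4}n\,d_+^r\}$ for all $t\ge 0$. Combining Lemma~\ref{lem:Spectral}(b), $\|\Delta u\|_{\mathcal{V}}\le\rho\|u\|_{\mathcal{V}}$, with the norm equivalence $\|\Delta u\|_{\mathcal{V},\infty}\le d_-^{-r/2}\|\Delta u\|_{\mathcal{V}}$ from Lemma~\ref{lem:normEquiv}, this gives
\[
K := \sup_{t\ge 0}\|\Delta u(t)\|_{\mathcal{V},\infty} \ \le\ d_-^{-r/2}\,\rho\,R \ <\ \infty .
\]
The finiteness of $K$ (all the $\e_\kappa$ branch needs) and the explicit spectral upper bound $d_-^{-r/2}\rho R$ (which the $\e_\rho$ branch uses) both follow from this single estimate.

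Next I would fix the barrier level $a := \frac12\min\{1,\ \min_{i\in V}|u_i(0)|\}$, which lies in $(0,1)$ precisely because $|u_i(0)|>0$ for every $i$, and which satisfies $a<|u_i(0)|$ for all $i$. I then set
\begin{equation}\label{eq:epskappa}
\e_\kappa := \frac{2\,d_+^{-r}\,a(1-a^2)}{K}
\end{equation}
and
\begin{equation}\label{eq:epsrho}
\e_\rho := \frac{2\,d_+^{-r}\,a(1-a^2)}{d_-^{-r/2}\rho R},
\end{equation}
so that $\e_\rho\le\e_\kappa$ by the bound on $K$. Consider a node with $u_i(0)>0$ (the case $u_i(0)<0$ follows from the $u\mapsto -u$ symmetry of \eqref{eq:ACEe}, valid since $W$ is even). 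At any time $t$ with $u_i(t)=a$, the potential term is $-\frac1\e d_i^{-r}W'(a)=\frac{4}{\e}d_i^{-r}a(1-a^2)>0$, because $W'(a)=4a(a^2-1)<0$, while the diffusion term obeys $-(\Delta u)_i(t)\ge -K$. Using $d_i^{-r}\ge d_+^{-r}$, for $\e\le\e_\kappa$ we obtain
\[
\dot u_i(t) \ \ge\ -K + \frac{4}{\e}\,d_+^{-r}\,a(1-a^2) \ \ge\ -K + 2K \ =\ K \ >\ 0 .
\]

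Then I would run the first-crossing argument. Since $u_i(0)>a$, if $t^\ast:=\inf\{t>0:u_i(t)=a\}$ were finite, continuity of the $C^1$ solution would force $u_i>a$ on $[0,t^\ast)$ and $u_i(t^\ast)=a$, whence $\dot u_i(t^\ast)\le 0$, contradicting the displayed strict inequality. Hence $u_i(t)>a>0$ for all $t$, so $\text{sign}(u_i(t))$ is constant; the symmetric argument gives $u_i(t)<-a$ when $u_i(0)<0$. This also covers the possibility $u_i(0)>1$ automatically, since to descend toward $0$ the trajectory would first have to cross the level $a\in(0,1)$, which is barred. Because $\e\le\e_\rho$ implies $\e\le\e_\kappa$, either hypothesis yields the conclusion.

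The one genuinely delicate point — and the reason Lemma~\ref{lem:positiveinvariant} is needed — is the \emph{uniform-in-time} bound $K<\infty$. The restoring force vanishes at $u_i=0$, so no barrier can be placed at the sign change itself; it must sit at a strictly positive level $a$, which forces both a positive lower bound on the initial data (the hypothesis $|u_i(0)|>0$, entering through $a$) and a finite upper bound on $\|\Delta u(t)\|_{\mathcal{V},\infty}$ holding for \emph{all} $t$, not merely after the trajectory has entered the invariant ball $S$. Supplying that uniform bound for the nonlinear flow is exactly what Lemma~\ref{lem:positiveinvariant} does; everything else reduces to the elementary barrier estimate above.
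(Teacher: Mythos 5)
Your proposal is correct and follows essentially the same route as the paper: Lemma~\ref{lem:positiveinvariant} gives a uniform-in-time bound on $\|u(t)\|_{\mathcal V}$, the spectral radius and norm equivalence convert this into a uniform bound on the diffusion term, and a barrier at a level strictly between $0$ and $\min_i|u_i(0)|$ then blocks any sign change; your two thresholds match the paper's $\e_\rho$ and $\e_\kappa$ up to harmless constants, and your strict inequality $\dot u_i\ge K>0$ at the barrier makes the first-crossing step slightly cleaner than the paper's non-strict version. The only point worth phrasing carefully is the reduction of the case $u_i(0)<0$: the symmetry $u\mapsto-u$ flips \emph{all} nodes at once, so one should either apply the positive-case barrier to the globally reflected solution $v=-u$ at node $i$, or simply run the barrier directly at the level $-a$ (where $W'(-a)>0$); either fix is immediate.
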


    \begin{proof}
         	By Lemma~\ref{lem:positiveinvariant},  $\|u(t)\|_{\mathcal V}^2 \leq \frac{17}4 n \, d_+^{-r}$ for $t$ large enough. Hence, by continuity of $u(t)$, there is a $C$ (depending on the initial condition) such that, for all $t\geq 0$,
         \begin{equation*}
              \|u(t)\|_{\mathcal{V}} \leq C.
         \end{equation*}
         Thus, if $\rho>0$ denotes the spectral radius of $\Delta$, we get, for all $i\in V$,
         \begin{equation*}
              d_i^r |\Delta u_i(t)|^2 \leq \|\Delta u(t)\|_{\mathcal{V}}^2 \leq \rho^2 C^2.
         \end{equation*}
         In particular
         \begin{equation}\label{eq:Deltauiineq}
         |\Delta u_i(t)|\leq \rho C d_i^{-\frac{r}2},
         \end{equation}
         for all $i \in V$, thus, we have the inequalities
         \begin{align*}
               -\rho C d_i^{-\frac{r}2} -\tfrac{1}{\e} d_i^{-r} W'(u_i) \leq \dot u_i \leq \rho C d_i^{-\frac{r}2} - \tfrac{1}{\e} d_i^{-r} W'(u_i).
         \end{align*}
	Without loss of generality, we can assume that there is a number $\alpha \in (0,1)$ such that $|u_i(0)|\geq \alpha$ for all $i \in V$. If there is an $i\in V$ such that $|u_i(t)|=\alpha$ for a given $t$, then we have that $|W'(u_i(t))| = 4\alpha(1-\alpha^2) $, with a sign opposite to that of $u_i$. Thus, if
         \begin{equation}\label{eq:epsrho}
              \e \leq \e_0 := C^{-1}\rho^{-1}4\alpha(1-\alpha^2) d_+^{-\frac{r}2} \leq C^{-1}\rho^{-1}4\alpha(1-\alpha^2) d_i^{-\frac{r}2},
         \end{equation}
         then $\dot u_i \leq 0$ if $u_i(0) <  0$, and $\dot u_i \geq 0$ if $u_i(0) > 0$. Hence $u_i(t)$ can never reach zero, and by continuity in $t$ it does not change sign.

         Alternatively, instead of \eqref{eq:Deltauiineq}, we can estimate
         \[
         |\Delta u_i(t)| \leq \sup_{t\geq 0} \|\Delta u(t)\|_{\mathcal{V},\infty} < \infty.
         \]
         The finitude of the right hand side follows from \eqref{eq:Deltauiineq}. Following the same reasoning as above, we then conclude
         \begin{equation}\label{eq:epskappa}
         \e_\kappa := \left(\sup_{t\geq 0} \|\Delta u(t)\|_{\mathcal{V},\infty}\right)^{-1} d_+^{-r} 4\alpha (1-\alpha)^2.
         \end{equation}
    \end{proof}

%We will discuss the constants $\e_\kappa$ and $\e_\rho$ of Theorem~\ref{thm:pinningAC} further in Section~\ref{sec:connections}.
The constant $\e_\kappa$ in Theorem~\ref{thm:pinningAC} involves $\| \Delta u \|_{\mathcal V,\infty}$, which is ``curvature-like''. Compare this to the constant $\tau_\kappa$ in Theorem~\ref{thm:pinningMBO}, which depends on the maximum curvature of the indicator set in the graph, $\| \Delta \chi_S \|_{\mathcal V,\infty}$, as also discussed in Section~\ref {sec:connections}. This tentative similarity makes us suspect, that a condition on the local curvature, similar to those for the \ref{alg:MBO} algorithm given in Theorem~\ref{thm:localtau}, guarantees a phase change in the Allen-Cahn flow. We discuss this further in Question~\ref{conj:ACcurvature}.

    We see that the discrete nature of the graph, manifest in the finite spectral radius of the Laplacian,  makes the limit behavior of \eqref{eq:ACEe} as $\e \to 0$ much different than that for the continuum case. In particular, this means that we ought to look for a notion of mean curvature flow on graphs more carefully.

\begin{remark}
   For $\e$ small enough, but not smaller than the $\e_0$ from Theorem~\ref{thm:pinningAC} above, we expect interesting asymptotic behavior for the motion of the phases in \eqref{eq:ACEe} on intermediate time scales. Such asymptotics might be connected to the graph curvature of the phases,
  %of the level sets (see Section~\ref{sec:curvature}),
  which would match the situation in the continuum setting, where the solution has phases that for large times behave as if they were evolving by mean curvature flow, while the solution itself becomes stationary in the limit $t\to+\infty$. This phenomenon is known as dynamic metastability (see for instance \cite{BronsardKohn91} and the references therein). See also Question~\ref{ques:ACMCF}.
\end{remark}

\section{Explicit and computational examples}\label{sec:examples}
In this section we give several examples of graphs where the mean curvature, MBO, and Allen Cahn evolutions can be compared either explicitly or computationally.

\subsection{Complete graph}\label{sec:complete}
Consider the complete graph, $K_n$, on n nodes with $\omega_{ij}=\omega$ for all $i,j \in V$. See Figure~\ref{fig:stara}. In this case, the matrix representation of the graph Laplacian is given by the circulant matrix,
$$
L= \omega [(n-1)\omega]^{-r} \left( n \ \text{Id}_n - 1_n1_n^t \right),
$$
where $1_n$ denotes the vector in $\mathbb R^n$ with all entries equal to 1, and $\text{Id}_n$ the identity matrix in $\R^{n\times n}$.
The eigenvalues of $L$ are  given by 0 and $\omega n [(n-1)\omega]^{-r}$ (with multiplicity $n-1$).
In particular, $\lambda_2 = \lambda_n = \rho$. Note that the normalized eigenvector corresponding to eigenvalue 0 is given by $(\mathrm{vol}\ V)^{-\frac12} \chi_V$.

Let $S\subset V$ be a set with volume ratio $R_S = \frac{\mathrm{vol} \ S}{\mathrm{vol}\ V}$ (see also Theorem~\ref{cor:trivialDynamics}). Using the spectral decomposition from \eqref{eq:specDecomp}, the evolution of $\chi_S$  by the heat equation can be explicitly written as
$$
e^{-t \Delta} \chi_S = R_S \chi_V + e^{- \rho t} \left( \chi_S - R_S \chi_V \right).
$$
Assume $R_S \neq \frac{1}{2}$. Then there exists a critical time step $\tau_c$ depending only on $\mathrm{vol} \ S$, $\mathrm{vol} \ V$, and $\rho$ such that $\tau<\tau_c$ implies the solution to the \ref{alg:MBO} evolution is pinned and $\tau\geq\tau_c$ implies exactly one iteration of the \ref{alg:MBO} evolution gives a stationary solution, either $0$ or $\chi_V$ depending on the initial mass, $M(\chi_S)=\mathrm{vol} \ S$ (see \eqref{eq:mass}). From the solution, the critical time step $\tau_c$ can be directly computed,
$$
\tau_c = \frac{1}{\rho} \log \frac{ \max\{Rs, 1-R_S\} }{| \frac{1}{2} - R_S |}
%\begin{cases}
%\frac{1}{\rho} \log \frac{1-R_S}{\frac{1}{2} - R_S} & R_S<\frac{1}{2}, \\
%\frac{1}{\rho} \log \frac{R_S}{R_S - \frac{1}{2} } & R_S>\frac{1}{2}.
%\end{cases}
$$
If $R_S=\frac{1}{2}$, symmetry pins the \ref{alg:MBO} evolution for all $\tau > 0$.

The bound from Theorem~\ref{thm:pinningMBO} states that pinning occurs if $\tau< \tau_\rho= \rho^{-1} \log \left( 1 + \frac{1}{2} (n R_S)^{-\frac{1}{2}}\right)  $, where we have used the fact that, for all $i\in V$, $d_i^r = \frac{\mathrm{vol} V}{n}$.
The bound in Theorem~\ref{cor:trivialDynamics} states that trivial dynamics occur if
$\tau > \tau_t = \rho^{-1}  \log \left(  \frac{n^{\frac{1}{2}} R_S^{\frac{1}{2}} R_{S^c}^{\frac{1}{2}} }{ | \frac{1}{2} - R_S |} \right)$.
%$\tau > \tau_t =\rho^{-1}  \log \left(  \frac{ |S|^{\frac{1}{2}} \ |S^c|^{\frac{1}{2}} }{n^{\frac{1}{2}} \ |R_S - \frac{1}{2}|} \right) $.
Note that for $n>2$,  $R_S > \frac{1}{n}$ and $\tau_t > \tau_c > \tau_\rho$.

By symmetry, both the Allen-Cahn equations and mean curvature flows reduce to two-dimensional systems, with one variable governing the value of the nodes in $S$ and the other the nodes in $S^c$. Critical parameters $\epsilon$ and $\eth t$ exist for which below the phase remains the same for all nodes and above the phase simultaneously changes.

\subsection{Star graph} \label{sec:star}

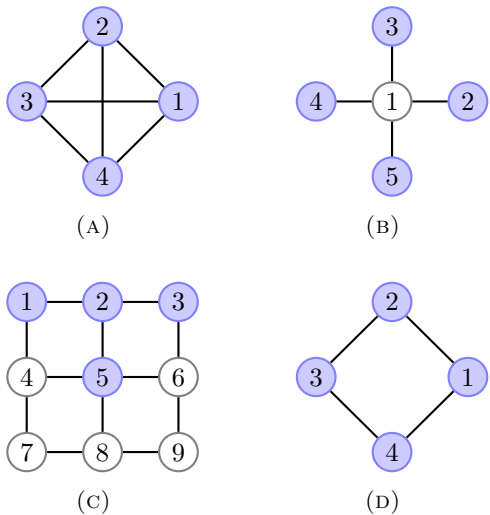
\begin{figure}
\begin{center}
\begin{subfigure}[b]{0.22\textwidth}
\centering
\begin{tikzpicture} %pgfmanual.pdf: page 45, 173
[inner sep=.8mm,
dot/.style={circle,draw=blue!50,fill=blue!20,thick}] %sep by commas
\node (1) at (1,0)[dot]{1};
\node (2) at (0,1)[dot] {2}
	edge[-,thick] node[auto,swap] {} (1);
\node (3) at (-1,0)[dot]{3}
	edge[-,thick] node[above left,swap] {} (1)
	edge[-,thick] node[above left,swap] {} (2);	
\node (4) at (0,-1)[dot]{4}
	edge[-,thick] node[above left,swap] {} (1)
	edge[-,thick] node[above left,swap] {} (2)
	edge[-,thick] node[above left,swap] {} (3);
\end{tikzpicture}
\caption{} \label{fig:stara}
\end{subfigure}
\qquad \qquad
\begin{subfigure}[b]{0.22\textwidth}
\centering
\begin{tikzpicture} %pgfmanual.pdf: page 45, 173
[inner sep=.8mm,
dot/.style={circle,draw=blue!50,fill=blue!20,thick},
%redDot/.style={circle,draw=red!50,fill=red!20,thick}] %sep by commas
redDot/.style={circle,draw=black!50,thick}] %sep by commas
\node (1) at (0,0)[redDot] {1};
\node (2) at (1,0)[dot]{2}
	edge[-,thick] node[auto,swap] {} (1);
\node (3) at (0,1)[dot] {3}
	edge[-,thick] node[auto,swap] {} (1);
\node (4) at (-1,0)[dot]{4}
	edge[-,thick] node[above left,swap] {} (1);	
\node (5) at (0,-1)[dot]{5}
	edge[-,thick] node[above left,swap] {} (1);
\end{tikzpicture}
\caption{} \label{fig:starb}
\end{subfigure} \\ \vspace{.5cm}
\begin{subfigure}[b]{0.22\textwidth}
\centering
\begin{tikzpicture} %pgfmanual.pdf: page 45, 173
[inner sep=.8mm,
dot/.style={circle,draw=blue!50,fill=blue!20,thick},
%redDot/.style={circle,draw=red!50,fill=red!20,thick}] %sep by commas
redDot/.style={circle,draw=black!50,thick}] %sep by commas
\node (1) at (-1,2)[dot]{1};
\node (2) at (0,2)[dot] {2}
	edge[-,thick] node[auto,swap] {} (1);
\node (3) at (1,2)[dot]{3}
	edge[-,thick] node[above left,swap] {} (2);	
\node (4) at (-1,1)[redDot]{4}
	edge[-,thick] node[above left,swap] {} (1);
\node (5) at (0,1)[dot]{5}
	edge[-,thick] node[above left,swap] {} (2)
	edge[-,thick] node[above left,swap] {} (4);
\node (6) at (1,1)[redDot]{6}
	edge[-,thick] node[above left,swap] {} (3)
	edge[-,thick] node[above left,swap] {} (5);
\node (7) at (-1,0)[redDot]{7}
	edge[-,thick] node[above left,swap] {} (4);
\node (8) at (0,0)[redDot]{8}
	edge[-,thick] node[above left,swap] {} (5)
	edge[-,thick] node[above left,swap] {} (7);
\node (9) at (1,0)[redDot]{9}
	edge[-,thick] node[above left,swap] {} (6)
	edge[-,thick] node[above left,swap] {} (8);
\end{tikzpicture}
\caption{} \label{fig:starc}
\end{subfigure}
\qquad \qquad
\begin{subfigure}[b]{0.22\textwidth}
\centering
\begin{tikzpicture} %pgfmanual.pdf: page 45, 173
[inner sep=.8mm,
dot/.style={circle,draw=blue!50,fill=blue!20,thick}] %sep by commas
\node (1) at (1,0)[dot]{1};
\node (2) at (0,1)[dot] {2}
	edge[-,thick] node[auto,swap] {} (1);
\node (3) at (-1,0)[dot]{3}
	edge[-,thick] node[above left,swap] {} (2);	
\node (4) at (0,-1)[dot]{4}
	edge[-,thick] node[above left,swap] {} (1)
	edge[-,thick] node[above left,swap] {} (3);
\end{tikzpicture}
\caption{} \label{fig:stard}
\end{subfigure}
\end{center}
\caption{ Some small graphs, discussed in the examples of Section~\ref{sec:examples}.
{\bf (a)} The complete graph $K_4$,
{\bf (b)} the star graph $SG_5$,
{\bf (c)} a small grid,
%used to illustrate Theorem~\ref{thm:localtau},
and
{\bf (d)} a cycle graph $C_4$; see Sections~\ref{sec:complete}, \ref{sec:star}, \ref{sec:squaregrid}, and~\ref{sec:tori}. }
\label{fig:star}
\end{figure}

Consider a star graph $SG_n$ as in Figure~\ref{fig:starb} with $n\geq3$ nodes. Here the central node (say node 1) is connected to all other nodes and the other $n-1$ nodes are only connected to the central node. Hence, for all $i \in \{2, \ldots, n\}$, $\omega_{1i} = \omega_{i1}> 0$, and all the other $\omega_{jk}$ are zero.

We consider the unnormalized graph Laplacian $L=D-A$ ($r=0$ in \eqref{eq:eigDef4}). Since $d_1 = \sum_{j=2}^n \omega_{1j}$ and $d_i = \omega_{1i}$, for $i\in \{2, \ldots, n\}$, we can explicitly compute the characteristic polynomial of $L$:
\[
p(\lambda) = \Big( -\lambda + \sum_{k=2}^n \omega_{1k} \Big) \prod_{j=2}^n (\omega_{1j} - \lambda) - \sum_{k=2}^n \omega_{1k}^2 \prod_{j\geq 2,\,\,j\neq k} (\omega_{1j}-\lambda).
\]
If all non-zero edge weights have the same value $\omega$, this simplifies considerably to
\[
p(\lambda) = \big((n-1) \omega - \lambda \big) (\omega-\lambda)^{n-1} - \omega^2 (n-1) (\omega-\lambda)^{n-2}.
\]
Hence, in this case, the eigenvalues are $\lambda_1 = 0$, $\lambda_i = \omega$ for $i \in \{2, \ldots, n-1\}$, and $\lambda_n = n\omega$. A choice of corresponding (normalized) eigenvectors $\{v^i\}_{i=1}^n$ is given by\footnote{Here, subscripts $j$ denote the components of the vectors.}
\begin{align*}
&v^1= n^{-\frac{1}{2}} \chi_V, \qquad
v^i_j  = 2^{-\frac{1}{2}}\begin{cases} 1 & \text{if } j=i,\\ -1 &\text{if } j=i+1,\\ 0 & \text{else},\end{cases} \text{ for } i \in \{2, \ldots, n-1\}\\
&v^n_j = \frac{1}{\sqrt{n(n-1)}}\begin{cases} n-1 & \text{if } j=1,\\ -1 & \text{if } j\neq 1. \end{cases}
\end{align*}

We now let $S = \{ 1\}$ and note that $\chi_S$ has the explicit expansion in terms of these eigenvectors,
$$
\chi_S = n^{-\frac{1}{2}} v^1 + (n-1)^{\frac{1}{2}} n^{-\frac{1}{2}} v^n.
$$
We now consider the \ref{alg:MBO} iterates of $\chi_S$. We compute
$$
e^{-\Delta \tau}  \chi_S = n^{-1} \chi_V  + (n-1)^{\frac{1}{2}} n^{-\frac{1}{2}} (e^{-n\omega \tau} ) v^n.
$$
Thus pinning occurs if $\tau< \tau_c := \frac{1}{n\omega} \log \left( 2 \frac{ n-1}{n-2} \right)$. If $\tau>\tau_c$, the solution to the \ref{alg:MBO} evolution gives the stationary solution, $0$, after exactly one iteration.
The bound from Theorem~\ref{thm:pinningMBO} states that pinning occurs if $\tau< \frac{1}{n\omega} \log \frac{3}{2} $.
The bound in Theorem~\ref{cor:trivialDynamics} states that trivial dynamics occur if
$\tau >  \frac{1}{\omega} \log \left( 2 \frac{ n-1}{n-2} \right) $.
Qualitatively, this example shows that it is easier for a solution to be pinned on nodes with smaller degree.

\medskip

We now consider an implication of Theorem~\ref{thm:localtau} in the case where the graph induced by $\overline{S_1}$ is a star graph with node 1 as center, {\it i.e.}, $d'_i = 0$ for all $i\in S_1$. This is certainly true for the case when the graph is a star graph and $S = \{2, \ldots, n\}$. The following lemma states, in the case where $r=1$, a simple criterion on the degrees for which there exists  a $\tau$ interval in which node 1 switches phase in a single iteration of \ref{alg:MBO}. We will see an application of the lemma in Section~\ref{sec:tree}.

\begin{lemma} \label{lemma:star}
Let $r=1$ and consider the case where the subgraph induced by $\overline{S_1}$ is a star graph with node 1 as center, {\it i.e.}, $d'_i = 0$ for all $i\in S_1$. If either
\begin{itemize}
\item $|S_1| = 1$ and $d_i < d_1$, for $i\in S_1$, or
\item $|S_1| \geq 2$ and $d_i \leq d_1$, for all $i\in S_1$,
\end{itemize}
then there exists a nontrivial $\tau$ interval (as in Theorem~\ref{thm:localtau}) such that node $1$ will change phase in the next \ref{alg:MBO} iterate.
\end{lemma}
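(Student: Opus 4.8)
The plan is to verify the hypothesis of Theorem~\ref{thm:localtau}, namely the gap inequality $|(\kappa_S^{1,r})_1|^2 > \|(\Delta')^2\chi_{S_1}\|_{\mathcal{V},\infty}$; once this holds, Theorem~\ref{thm:localtau} automatically produces a nonempty interval $(\tau_1,\tau_2)$ given by \eqref{eq:tau12} on which the phase at node~$1$ changes, so no further dynamical analysis is needed. Since $r=1$, the cleanest route is not to estimate $\|(\Delta')^2\chi_{S_1}\|_{\mathcal{V},\infty}$ by hand but to invoke Corollary~\ref{cor:r=1}, which already repackages the equivalent condition \eqref{eq:gapcondition} of Lemma~\ref{lem:gapcondition} into the checkable conditions \eqref{eq:condr=1} on the reduced degrees $d_i'$ and the incident weights $\omega_{i1}$.

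The decisive simplification is that the star hypothesis is precisely $d_i'=0$ for all $i\in S_1$. I would first note that $d_1'=\sum_{j\in S_1}\omega_{1j}>0$, because $S_1$ is a nonempty subset of $\mathcal{N}_1$, so the standing assumption $d_1'>0$ of Corollary~\ref{cor:r=1} is satisfied. With $d_i'=0$, the first inequality in \eqref{eq:condr=1}, $d_i'/d_i\leq\e\,d_1'/d_1$, holds for every admissible $\e\in[0,1)$, so the entire verification collapses to exhibiting one $\e$ for which the second inequality $\omega_{i1}/d_i<(1-\e^2)\,d_1'/d_1$ holds for all $i\in S_1$ simultaneously. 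Taking $\e=0$ maximizes the right-hand side and reduces the task to the single relative-degree comparison $\omega_{i1}/d_i<d_1'/d_1$ for each $i\in S_1$.

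The proof then splits along the two stated cases, and this bookkeeping is where I expect the real difficulty to lie. When $|S_1|\geq 2$ the quantity $d_1'=\sum_{j\in S_1}\omega_{1j}$ carries, in addition to the term $\omega_{1i}=\omega_{i1}$, at least one further strictly positive contribution; the plan is to use this surplus to offset a loss of strictness and so pass from the sharp inequality above to the clean non-strict degree hypothesis relating $d_i$ and $d_1$. When $|S_1|=1$ we have $d_1'=\omega_{1i}$ exactly, there is no surplus, and the comparison degenerates to a bare inequality between $d_i$ and $d_1$, which is why the hypothesis in that case must be strict. The main obstacle is thus quantitative rather than conceptual: one must show that the stated degree conditions are exactly strong enough to force $\omega_{i1}/d_i<d_1'/d_1$ for every $i\in S_1$ and, in the borderline $|S_1|=1$ situation, to keep the discriminant in \eqref{eq:tau12} positive so that $(\tau_1,\tau_2)$ is genuinely nonempty. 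Once the gap inequality is in hand, the conclusion is immediate from Theorem~\ref{thm:localtau}, with the interval taken open or closed according to whether $1\in S$ or $1\notin S$.
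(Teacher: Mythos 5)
Your proposal is correct and follows essentially the same route as the paper: both reduce the problem, via condition \eqref{eq:condr=1} (with the first inequality trivially satisfied since $d_i'=0$ and $\e$ taken so that the second inequality becomes $\omega_{i1}/d_i<d_1'/d_1$, equivalently $\omega_{i1}/d_1'<d_1/d_i$), and then split into the two cases exactly as you describe — $\omega_{i1}=d_1'$ forcing strictness when $|S_1|=1$, and the strict surplus $\omega_{i1}<d_1'$ absorbing the non-strict hypothesis $d_i\leq d_1$ when $|S_1|\geq 2$. No gaps.
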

\begin{proof}
We see from condition \eqref{eq:condr=1} (or via direct computation from \eqref{eq:gapcondition}) that a sufficient condition for $\tau_1 < \tau_2$, is to have, for all $i\in S_1$, $\displaystyle \frac{\omega_{i1}}{d_i} < \frac{d'_1}{d_1}$ or equivalently $\displaystyle \frac{\omega_{i1}}{d'_1} < \frac{d_1}{d_i}$. If $i\in S_1$ is the only node in $S_1$, then $\omega_{i1} = d'_1$ and we find the condition $d_i < d_1$. If however $|S_1| \geq 2$, we have $\displaystyle \frac{\omega_{i1}}{d'_1} \leq 1$, because $\displaystyle d'_1 = \omega_{i1} + \sum_{j\in S_1\setminus\{i\}} \omega_{ij}$, and thus the condition on $\frac{d_1}{d_i}$ can be replaced by the simpler (but stronger) condition $d_i < d_1$, for all $i\in S_1$.
\end{proof}

\subsection{A regular tree } \label{sec:tree}
We consider the \ref{alg:MBO} iterations on a regular tree as in Figure~\ref{fig:tree}. Let $\omega_{ij}=\omega$, for all $(i,j)\in E$,  and $r=1$. As in Figure~\ref{fig:treea}, we consider the case where the initial set $S$ consists of the leaves of a branch. We first observe that the subgraph induced by $\overline{S_j}$, for any $j\in V$, is a star graph with node $j$ as center,  {\it i.e.}, $d'_i = 0$ for all $i\in S_j$ (for an example of a star graph with five nodes, see Figure~\ref{fig:starb}), so that the hypothesis of Lemma~\ref{lemma:star} are satisfied with nodes 9 and 10 each playing the role of ``node 1'' in the lemma. %The following lemma gives conditions for which node $j$ changes in the \ref{alg:MBO} iterate.

Applying Lemma~\ref{lemma:star} to node 9 in Figure~\ref{fig:treea} where $S=\{ 1,2,3,4\}$, we see that there exists a $\tau$ such that node 9 will change in the next iteration. By symmetry, node 10 will change in the same iteration. If node 13 doesn't change in the first MBO iteration, Lemma~\ref{lemma:star} can be applied again (because node 13 has two children, the hypotheses of the lemma are again satisfied with $9, 10 \in S_{13}$) to show that there exists a $\tau$ such that node 13 will be added to the set. %By induction, the entire branch, $\{1,2,3,4,9,10,13\}$, will join the set.
After node 13 has been added to the set, $S$, as in Figure~\ref{fig:treeb} the MBO iterates are stationary. To see that node 15 cannot be added to $S$, assume that it were.  Then the value of the Lyapunov functional, \eqref{eq:Lyapunov}, must have decreased. But by symmetry, in the next MBO iteration, node 15 will be removed from $S$, again decreasing the value of the Lyapunov functional, a contradiction. The final configuration in Figure~\ref{fig:treeb} minimizes the normalized cut, as defined in Section~\ref{sec:BalancedGraphCut}.

This argument is easily generalized to trees where each node, excluding leaves, has the same number of children $c\geq 2$.

\begin{figure}
\begin{center}
\begin{subfigure}[b]{.85\textwidth}
%\centering
%\hspace{-4cm}
\begin{tikzpicture} %pgfmanual.pdf: page 45, 173
[inner sep=.8mm,
dot/.style={circle,draw=blue!50,fill=blue!20,thick},
%redDot/.style={circle,draw=red!20,fill=red!20,thick}] %sep by commas
redDot/.style={circle,draw=black!50,thick},
scale=0.6] %sep by commas
\node (1) at (-7,1)[redDot]{1};
\node (2) at (-5,1)[redDot] {2};
\node (3) at (-3,1)[redDot] {3};
\node (4) at (-1,1)[redDot] {4};
\node (5) at (1,1)[dot] {5};
\node (6) at (3,1)[dot] {6};
\node (7) at (5,1)[dot] {7};
\node (8) at (7,1)[dot] {8};
\node (9) at (-6,2)[dot] {9}
	edge[-,thick] node[above left,swap] {} (1)
	edge[-,thick] node[above left,swap] {} (2);	
\node (10) at (-2,2)[dot] {10}
	edge[-,thick] node[above left,swap] {} (3)
	edge[-,thick] node[above left,swap] {} (4);	
\node (11) at (2,2)[dot] {11}
	edge[-,thick] node[above left,swap] {} (5)
	edge[-,thick] node[above left,swap] {} (6);	
\node (12) at (6,2)[dot] {12}
	edge[-,thick] node[above left,swap] {} (7)
	edge[-,thick] node[above left,swap] {} (8);	
\node (13) at (-4,3)[dot] {13}
	edge[-,thick] node[above left,swap] {} (9)
	edge[-,thick] node[above left,swap] {} (10);	
\node (14) at (4,3)[dot] {14}
	edge[-,thick] node[above left,swap] {} (11)
	edge[-,thick] node[above left,swap] {} (12);	
\node (15) at (0,4)[dot] {15}
	edge[-,thick] node[above left,swap] {} (13)
	edge[-,thick] node[above left,swap] {} (14);		
\end{tikzpicture}
\caption{Initial configuration} \label{fig:treea}
\end{subfigure} \\
\vspace{1cm}
\begin{subfigure}[b]{.85\textwidth}
%\hspace{-4cm}
%\centering
\begin{tikzpicture} %pgfmanual.pdf: page 45, 173
[inner sep=.8mm,
dot/.style={circle,draw=blue!50,fill=blue!20,thick},
%redDot/.style={circle,draw=red!20,fill=red!20,thick}] %sep by commas
redDot/.style={circle,draw=black!50,thick},
scale=0.6] %sep by commas
\node (1) at (-7,1)[redDot]{1};
\node (2) at (-5,1)[redDot] {2};
\node (3) at (-3,1)[redDot] {3};
\node (4) at (-1,1)[redDot] {4};
\node (5) at (1,1)[dot] {5};
\node (6) at (3,1)[dot] {6};
\node (7) at (5,1)[dot] {7};
\node (8) at (7,1)[dot] {8};
\node (9) at (-6,2)[redDot] {9}
	edge[-,thick] node[above left,swap] {} (1)
	edge[-,thick] node[above left,swap] {} (2);	
\node (10) at (-2,2)[redDot] {10}
	edge[-,thick] node[above left,swap] {} (3)
	edge[-,thick] node[above left,swap] {} (4);	
\node (11) at (2,2)[dot] {11}
	edge[-,thick] node[above left,swap] {} (5)
	edge[-,thick] node[above left,swap] {} (6);	
\node (12) at (6,2)[dot] {12}
	edge[-,thick] node[above left,swap] {} (7)
	edge[-,thick] node[above left,swap] {} (8);	
\node (13) at (-4,3)[redDot] {13}
	edge[-,thick] node[above left,swap] {} (9)
	edge[-,thick] node[above left,swap] {} (10);	
\node (14) at (4,3)[dot] {14}
	edge[-,thick] node[above left,swap] {} (11)
	edge[-,thick] node[above left,swap] {} (12);	
\node (15) at (0,4)[dot] {15}
	edge[-,thick] node[above left,swap] {} (13)
	edge[-,thick] node[above left,swap] {} (14);		
\end{tikzpicture}
\caption{Final configuration} \label{fig:treeb}
\end{subfigure}
\end{center}
\caption{The initial and final configurations for an evolution by the \ref{alg:MBO} scheme on a tree graph; see Section~\ref{sec:tree}.}
\label{fig:tree}
\end{figure}
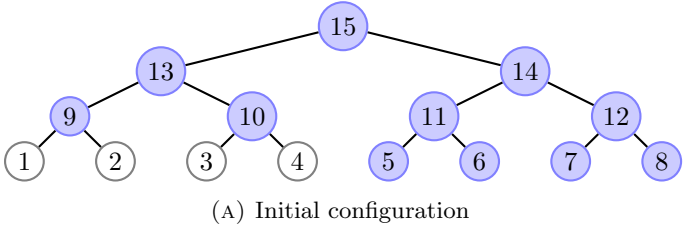
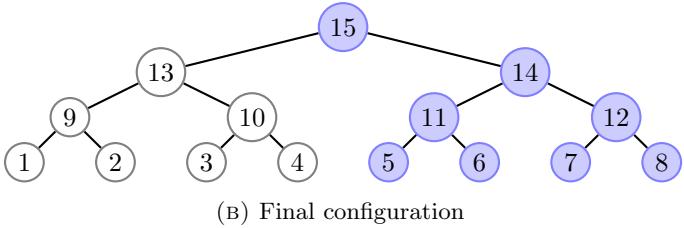

\subsection{A small square grid} \label{sec:squaregrid}
Here we construct an explicit example where Theorem~\ref{thm:localtau} can be applied to show that there exists a time  interval $(\tau_1, \tau_2)$ such that a node is guaranteed to change in one iteration of the \ref{alg:MBO} algorithm.

Consider a three by three (nonperiodic) square grid as in  Figure~\ref{fig:starc} with unit edge weights, with nodes numbered 1 through 9 from left to right, top to bottom. Let $S=\{4, 6, 7, 8, 9\}$.  We focus on node 5. We have $\mathcal{N}_5 = \{2, 4, 6, 8\}$ and $S_5 = \mathcal{N}_5 \cap S = \{4, 6, 8\}$. We then compute $d'_5=3$, $d_5=4$, $d'_4=d'_6=d'_8=0$ and $d_4=d_6=d_8=3$. It is easily checked that, for $i\in S_5$,
\[
\frac{d'_i}{d_i} = 0 \leq \frac34 = \frac{d'_5}{d_5} \quad \text{and} \quad \frac{\omega_{i5}}{d_i} = \frac13 < \frac34 = \frac{d'_5}{d_5},
\]
such that conditions \eqref{eq:condr=1} are satisfied. Furthermore, with $\overline{S_5} := S_5 \cup \{5\}$,
\[
\sum_{k\in \overline{S_5}} \frac{d'_k}{d_k} \frac{\omega_{ik}}{d_i} = \begin{cases} 0 & \text{if } i=5,\\ \frac14 & \text{if } i\in S_5,\end{cases}
\]
thus $\left(\frac{d'_5}{d_5}\right)^2 - \max_{i\in \overline{S_5}} \sum_{k\in \overline{S_5}} \frac{d'_k}{d_k} \frac{\omega_{ik}}{d_i}  = \left(\frac34\right)^2 - \frac14 = \frac5{16} > 0$, and even the full condition \eqref{eq:gapcondition}, for $r=1$, is satisfied. From \eqref{eq:tau12} we can then compute
\[
\tau_{1,2} = \frac{3/4}{1/4} \pm 4 \sqrt{\frac5{16}} = 3 \pm \sqrt5,
\]
for the time interval $(\tau_1, \tau_2)$ of Theorem~\ref{thm:localtau}.

\subsection{Torus graph} \label{sec:tori}

Consider the $n$-cycle, $C_n$ with $n$ nodes. The nodes are arranged in a circle and each node is connected to its 2 neighbors.  We take $\omega_{ij} = \omega$ for $i\sim j$ and zero otherwise.
See Figure~\ref{fig:stard}.
We consider the unnormalized graph Laplacian $L=D-A$ ($r=0$ in \eqref{eq:eigDef4}).
In this case, $L$ is a circulant matrix $\text{diag}(\{-1,2,-1\}, \{-1,0,1\})$. The eigenpairs $\{ (\lambda_j, v^j) \}_{j=1}^n$ are given by
\begin{align*}
\lambda_j &= 2 \omega - 2 \omega \cos \frac{2 \pi (j-1)}{n}   \\
v^j_i &= \exp\left( 2\pi i (j-1)/n \right).
\end{align*}

We then consider the $2$-torus graph, $T^2_{n_1,n_2}$ which is the Kronecker (tensor) product of the $n_1$- and $n_2$-cycles. See Figure~\ref{fig:tori}. In particular, if $u$ and $v$ are eigenfunctions of the graph Laplacian on $C_{n_1}$ and $C_{n_2}$ with corresponding eigenvalues $\alpha$ and $\beta$ respectively, then $w=u\otimes v$ (with $w_{i,j} = u_i v_j$) is an eigenvector of $T^2_{n_1,n_2}$ with corresponding eigenvalue $\alpha + \beta$.
In particular,  the spectral radius of the Laplacian is  $\rho = 8 \omega$.

Consider for a moment $T^2_{n_1,n_2}$ as a discretization of the torus, $\mathbb T^2$.
The nontrivial minimal-perimeter subsets of $\mathbb T^2$ are given by ``strips''.
Thus we might expect that for some initial condition, $\chi_S$, $S\subset V$ the evolution by MBO, Allen-Cahn, or MC would converge to a strip.

We consider the \ref{alg:MBO} evolution on a $32 \times 12$ torus with $\omega=1$ and initial condition, as in Figure~\ref{fig:toria}.
For $\tau=1.12$, the solution is stationary after 4 iterations once the ``high curvature corners'' have been removed, as in Figure~\ref{fig:torib}. For $\tau = 4$, the solution evolves into a minimal-perimeter ``strip'' in 5 iterations, as in Figure~\ref{fig:toric}.

For the parameters in Figure~\ref{fig:torib}, we compute the guaranteed stationarity bounds in
 \eqref{eq:taup} and  \eqref{eq:tauk} to be  $\tau_\rho\approx 0.0057$ and  $\tau_\kappa = \frac{1}{4}$, respectively, showing these bounds are not sharp.

Consider \eqref{eq:MCFh}, with $S_n$ equal to the minimal-perimeter strip in Figure \ref{fig:toric}. Then $S_{n+1}=S_n$ is a minimizer of $\mathcal{F}(\cdot, S_n)$, but so are $S_{n+1} = S_n \cup \partial (S_n^c)$ and $S_{n+1} = S_n \setminus \partial S_n$ (or variations in which only one `vertical line' of the boundary is added or removed). This illustrates a possible type of non-uniqueness for \eqref{eq:MCFh}, which occurs when $S_n$ is totally geodesic ({\it i.e.}, its boundary is a geodesic).
To reiterate, the stationary solution in Figure~\ref{fig:torib} is frozen (due to the smallness of $\tau$), while the solution in Figure \ref{fig:toric} is totally geodesic.

\begin{figure}[t!]
\begin{center}
\begin{subfigure}[b]{0.32\textwidth}
%\centering
%\hspace{.15cm}
\includegraphics[width=1.1\textwidth]{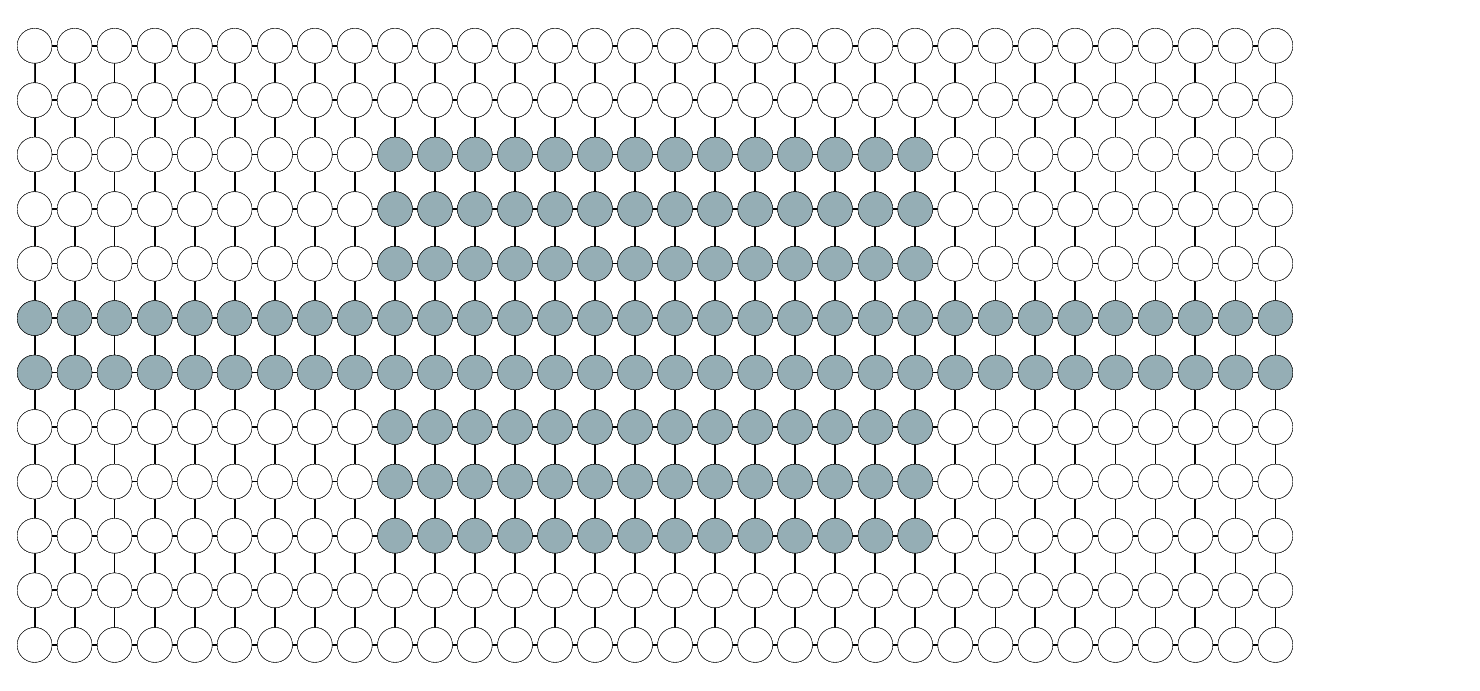}
\caption{} \label{fig:toria}
\end{subfigure}
\begin{subfigure}[b]{0.32\textwidth}
%\centering
\hspace{.15cm}
\includegraphics[width=1.1\textwidth]{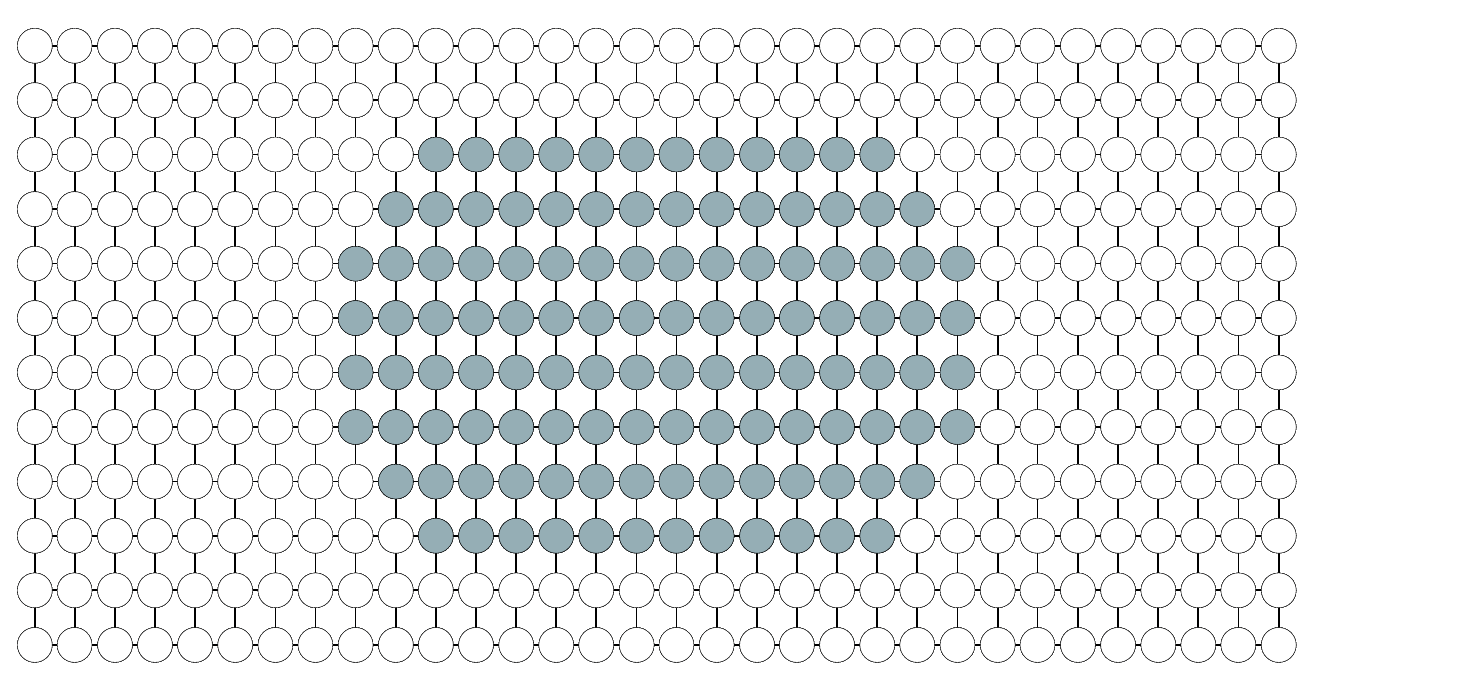}
\caption{} \label{fig:torib}
\end{subfigure}
\begin{subfigure}[b]{0.32\textwidth}
%\centering
\hspace{.15cm}
\includegraphics[width=1.1\textwidth]{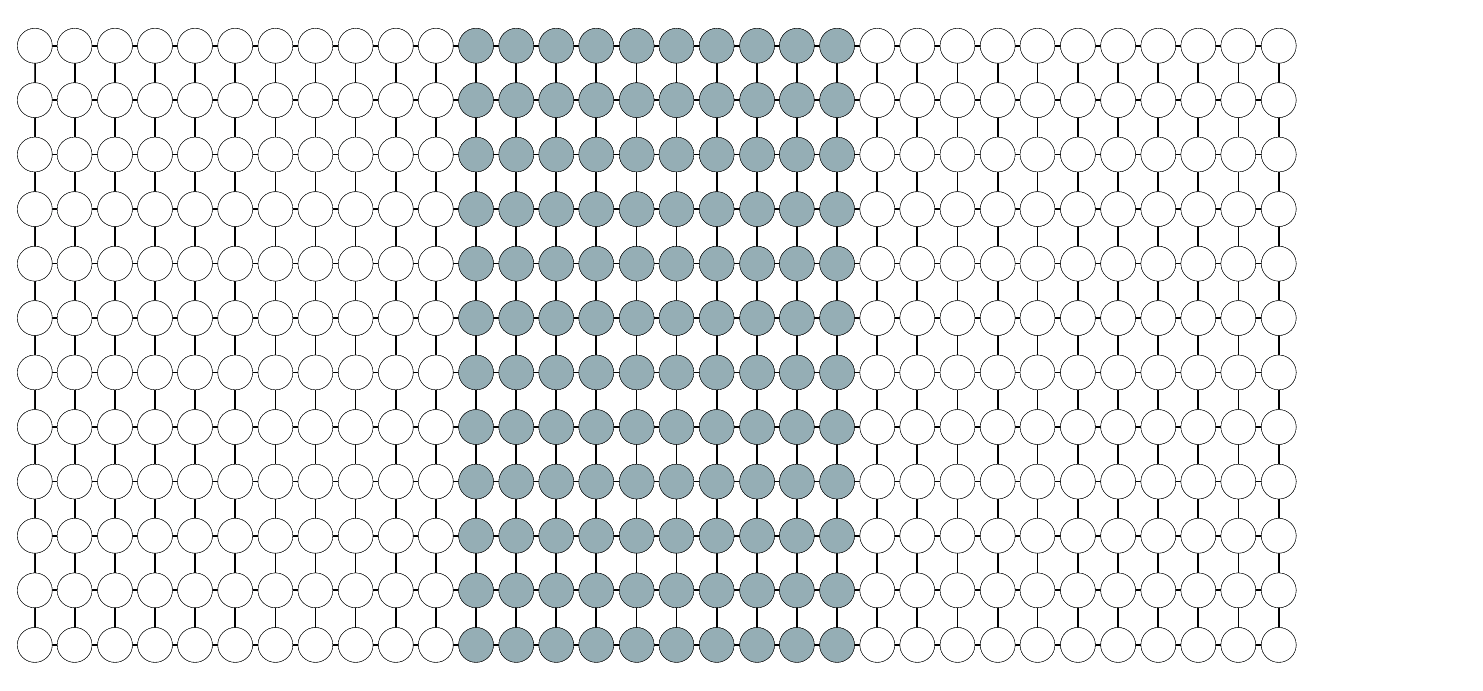}
\caption{} \label{fig:toric}
\end{subfigure}
\caption{Two \ref{alg:MBO} evolutions on the 2-torus graph, $T^2_{32,12}$. The top and bottom `border' nodes are connected (not shown) as are the left and right `border' nodes.  {\bf (a)} Initial condition. {\bf (b)} For $\tau=1.12$, the stationary state shown is reached in 4 iterations. {\bf (c)} For $\tau=4$, the stationary state shown is reached in 5 iterations.  See Section~\ref{sec:tori}. }
\label{fig:tori}
\end{center}
\end{figure}

\subsection{Buckyball graph}\label{sec:bucky}

\begin{figure}[t!]
\begin{center}
\includegraphics[width=.45\textwidth]{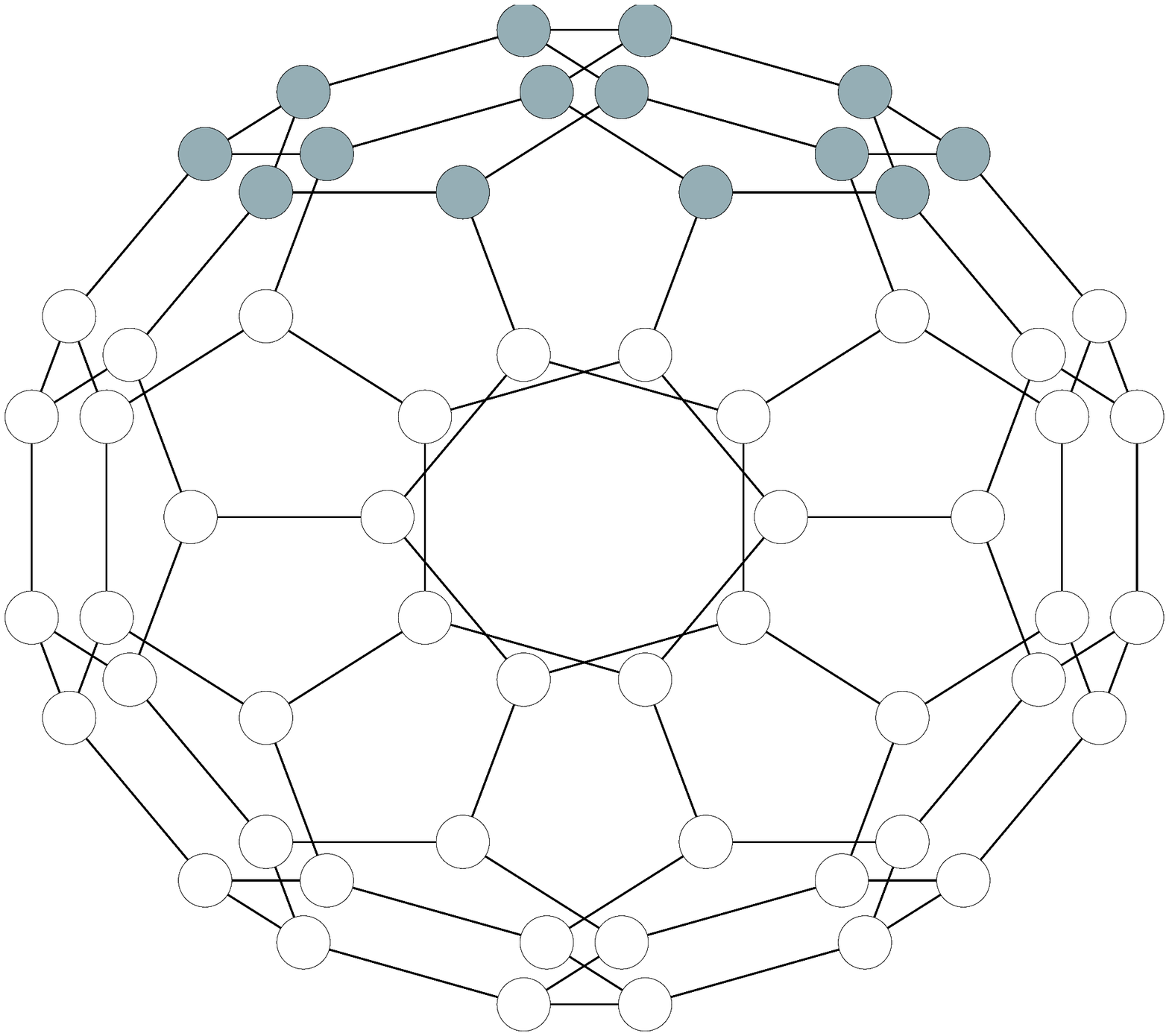}
\includegraphics[width=.45\textwidth]{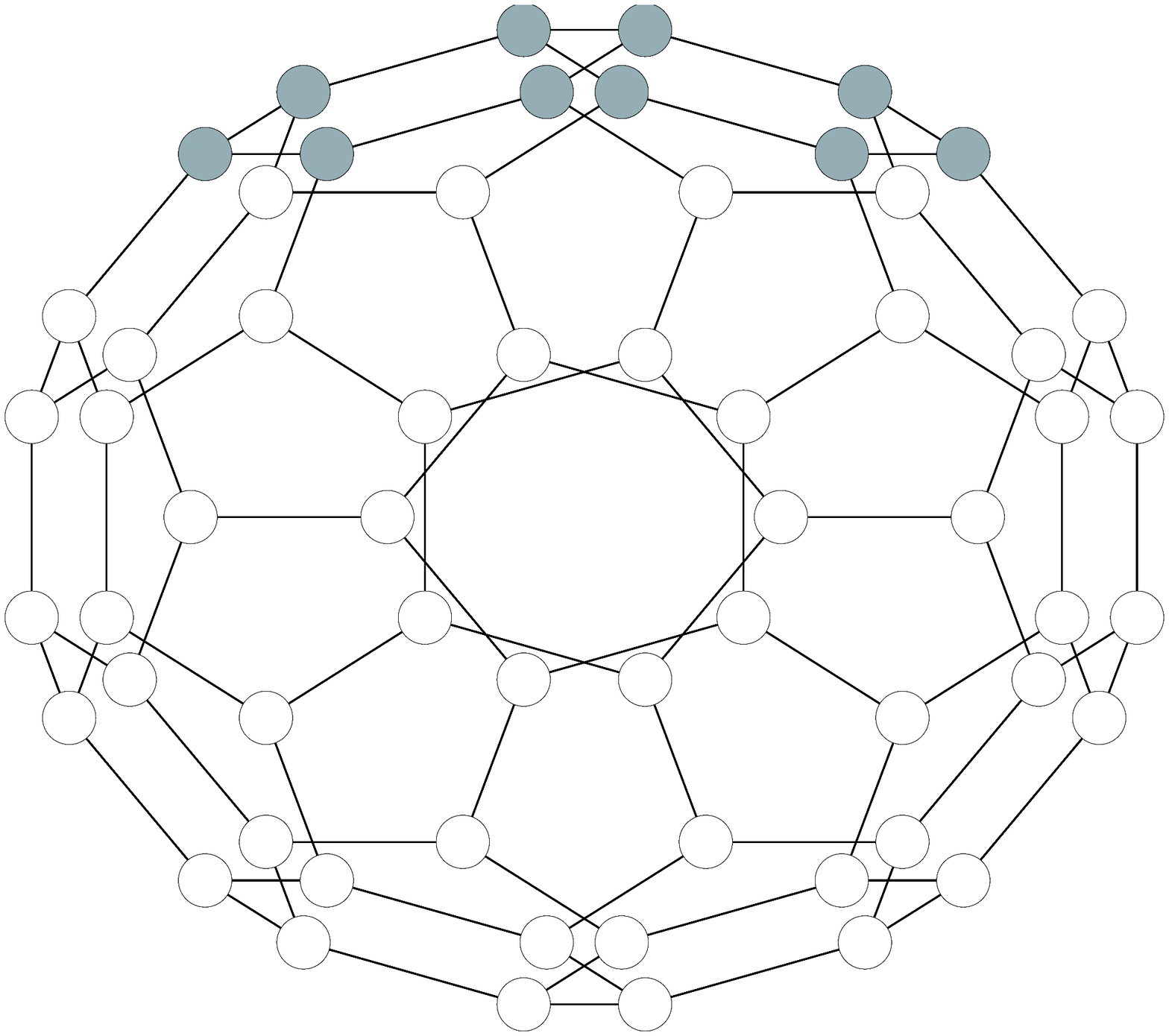}
\includegraphics[width=.45\textwidth]{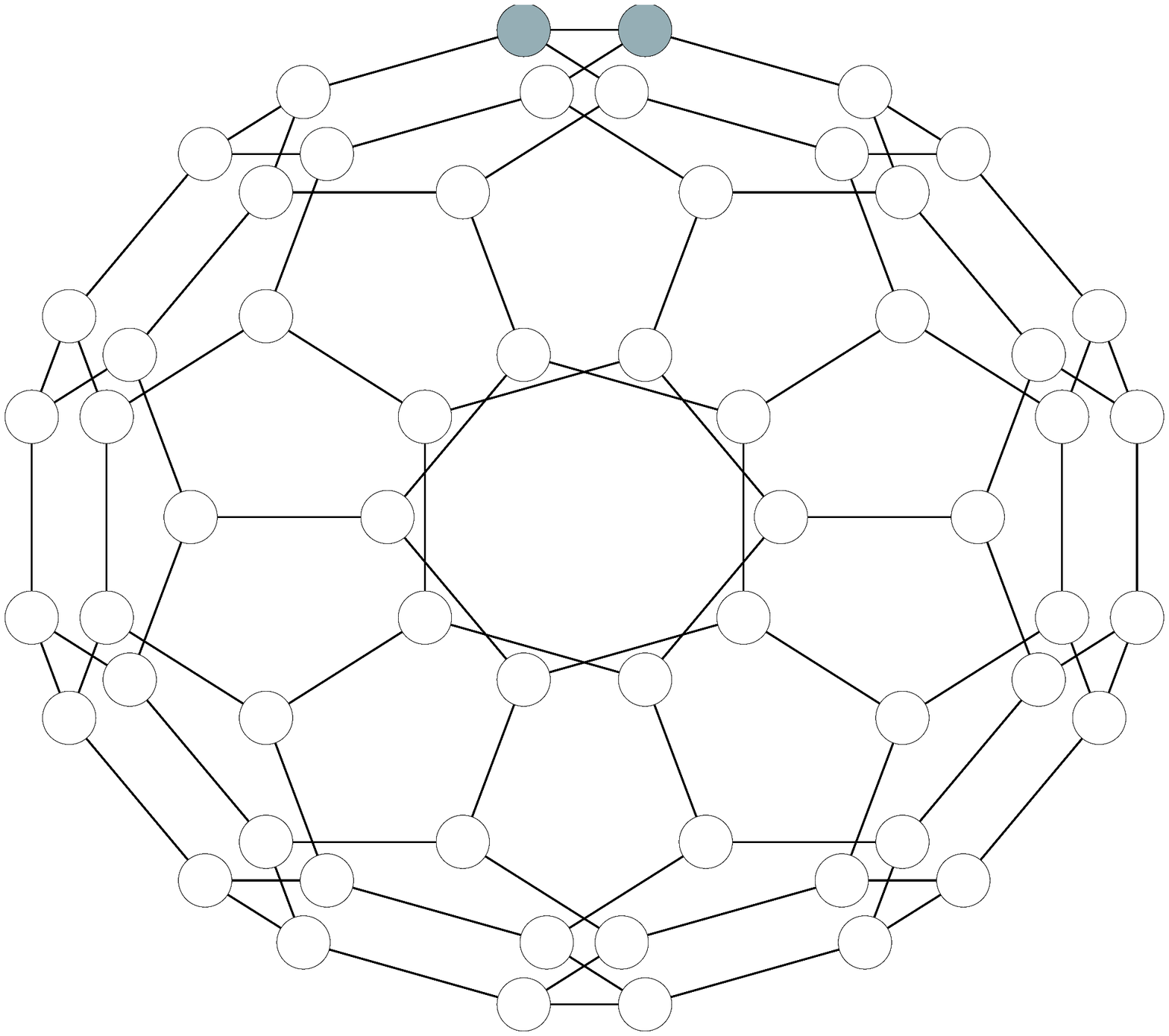}
\includegraphics[width=.45\textwidth]{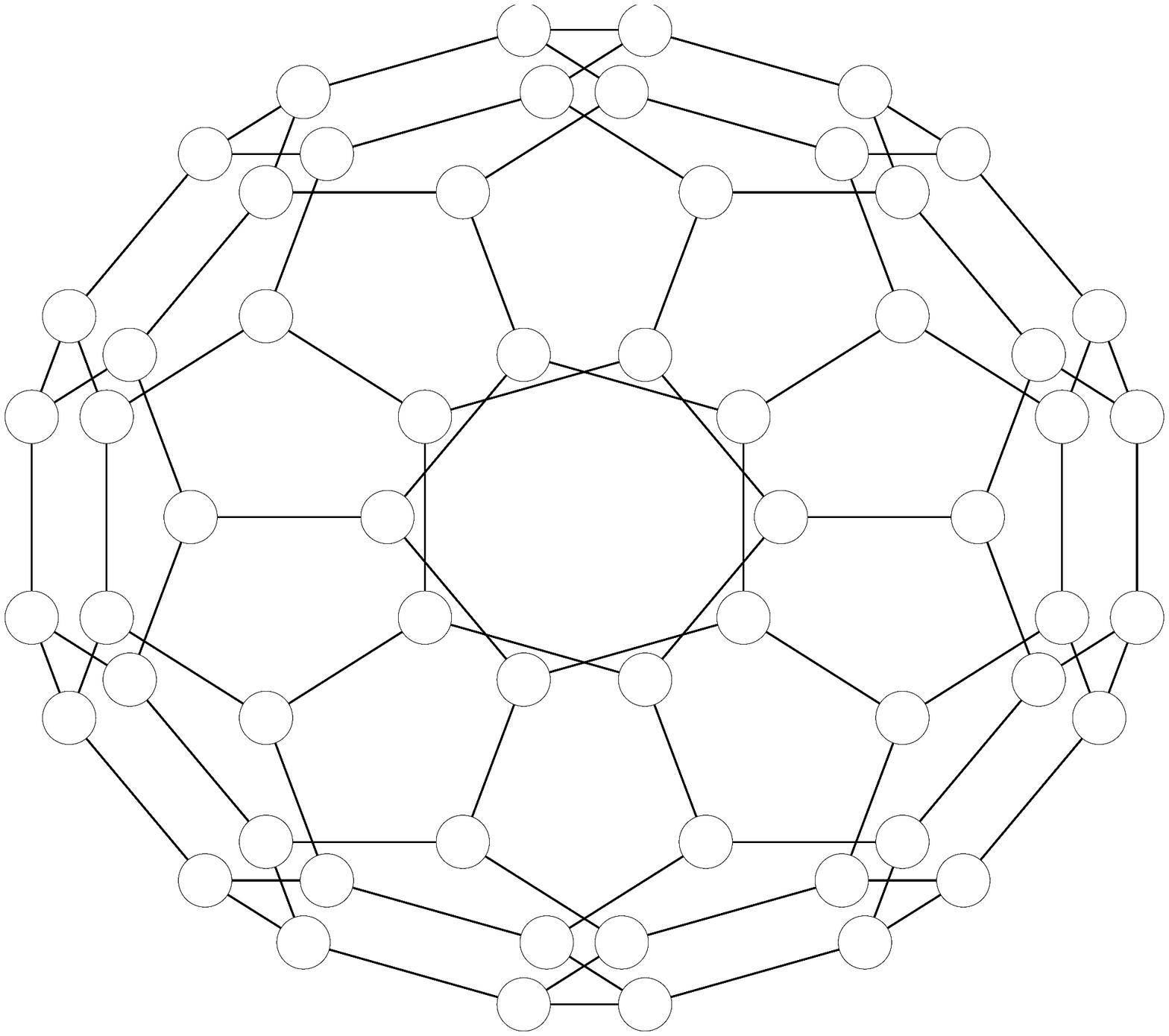}
\caption{An \ref{alg:MBO} evolution with $\tau=2$ on the buckyball graph. The solution at each iteration is the characteristic function of the gray nodeset. See Section~\ref{sec:bucky}. }
\label{fig:bucky}
\end{center}
\end{figure}

Consider the buckyball graph with 60 nodes and 90 edges with $\omega_{ij}=\omega$ for all edges $(i,j)$ as in Figure~\ref{fig:bucky}. The graph is regular; each node has degree $3\omega$.

Consider for a moment the buckyball graph as a (coarse) discretization of the sphere, $\mathbb S^2$. There are no nontrivial minimal-perimeter subsets of $\mathbb S^2$. Great circles are the only nontrivial stationary submanifolds of $\mathbb S^2$ (and have constant curvature). In fact, great circles are totally geodesic. Thus we might expect that for any initial condition, $\chi_S$, $S\subset V$ such that $|S|\neq |\mathbb S^2|/2$, the evolution by MBO, Allen-Cahn, or MC would converge to a stationary solution, either $0$ or $\chi_V$ depending on the initial mass, $M(\chi_S)=\mathrm{vol} \ S$. If $S$ is chosen to be a symmetric partitioning of the nodes for  the buckyball graph, we expect that the \ref{alg:MBO} evolution will be stationary for all values of $\tau$.

The bound from Theorem~\ref{thm:pinningMBO} states that pinning in \ref{alg:MBO} occurs if
$$\tau< \rho^{-1} \log \left( 1 + \frac{1}{2} |S|^{- \frac{1}{2}} \right).
$$
The bound in Theorem~\ref{cor:trivialDynamics} states that trivial dynamics occur if
$$
\tau >  \lambda_2^{-1} \log \left(  \frac{ (3 \omega)^{\frac{r}{2}} |S| (n-|S|) } { \left| |S| - \frac{n}{2}\right| } \right).
$$
We find numerically that $\lambda_2 \approx \omega^{1-r} 3^{-r} \cdot 0.2434$ and $\lambda_n \approx \omega^{1-r} 3^{-r} \cdot 5.6180$.

For initial condition $\chi_S$, with $|S| = 14$, as given in Figure~\ref{fig:bucky} (top left),  and $r=0$ and $\omega = 1$, Theorems~\ref{thm:pinningMBO} and~\ref{cor:trivialDynamics} predict that pinning occurs if $\tau < 0.0223$ and trivial dynamics occur if $\tau > 15.1811$. We find numerically that this initial condition is  pinned if $\tau< 1.89$ and trivial dynamics occur if $\tau > 3.54$.  For intermediate values of $\tau$, the iterates shrink to the empty node set. For $\tau=2$, the iterates take 3 iterations to reach steady state, as illustrated in Figure~\ref{fig:bucky}.

For the initial condition $\chi_S$ where $S$ is taken to be a symmetric partitioning of the nodes, \ref{alg:MBO} evolution is pinned  for all values of $\tau$.

\subsection{Adjoining regular lattices}\label{sec:adj}

\begin{figure}[t!]
\begin{center}
\includegraphics[width=.45\textwidth]{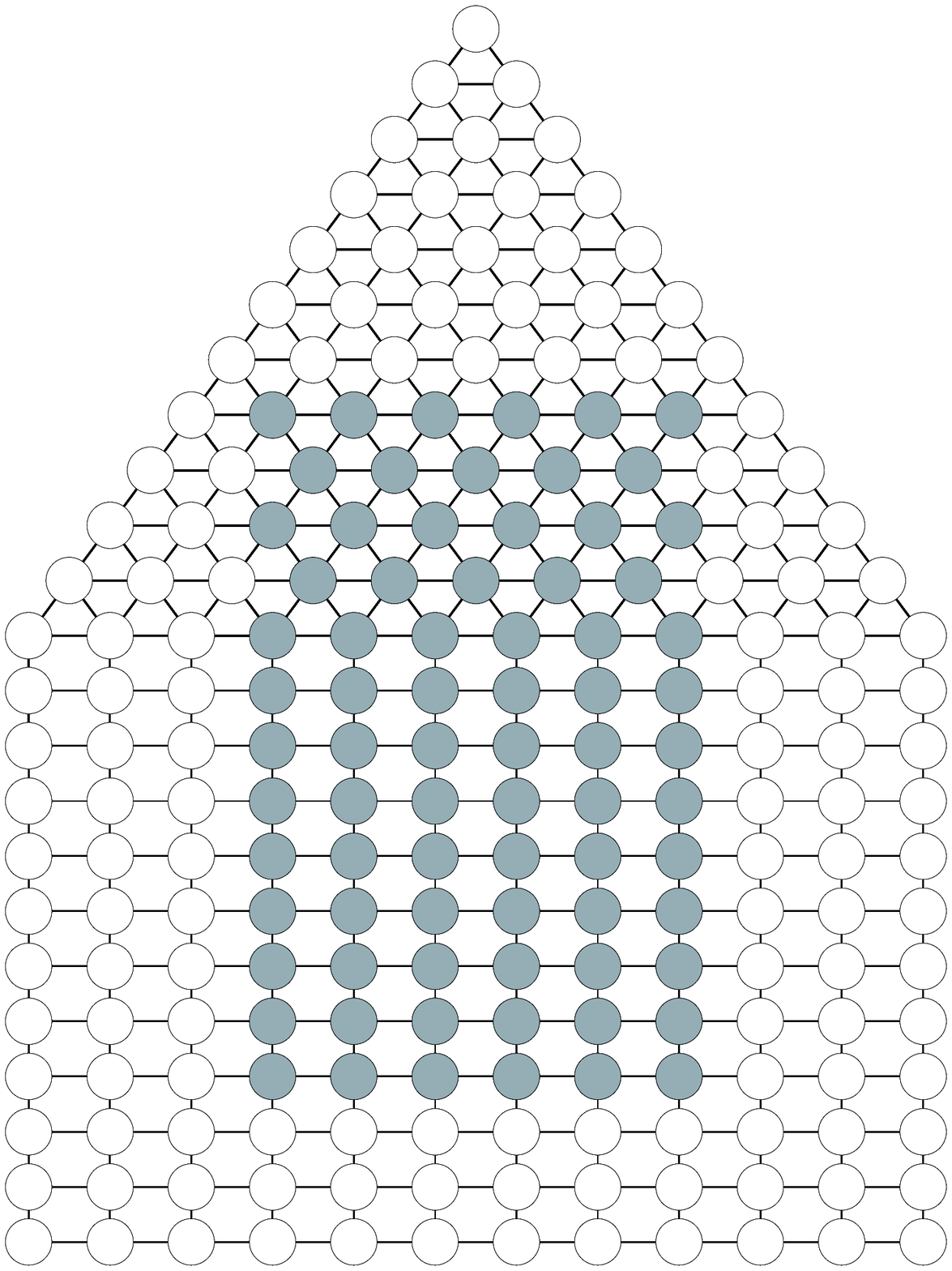} \qquad
\includegraphics[width=.45\textwidth]{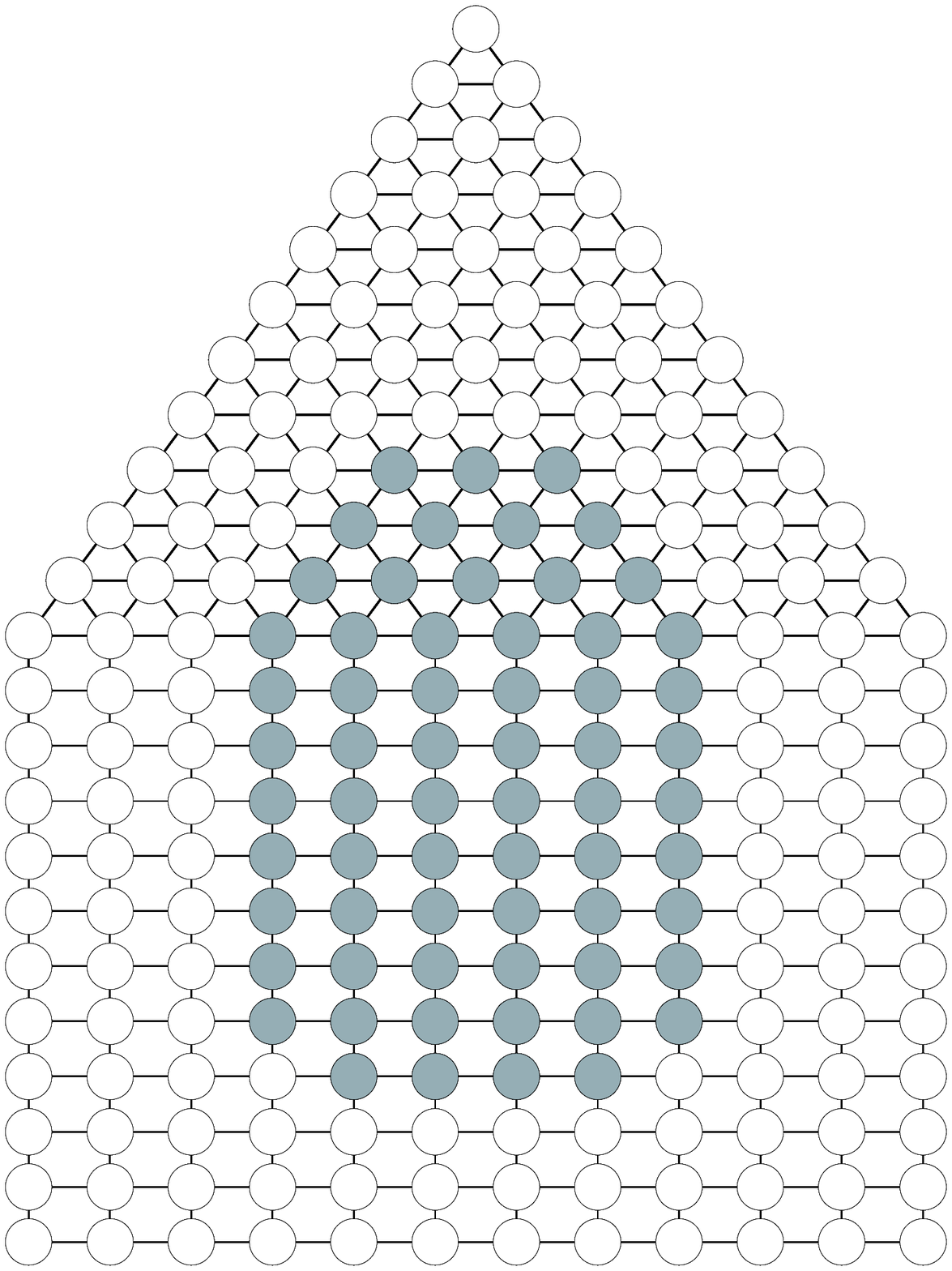}
\includegraphics[width=.45\textwidth]{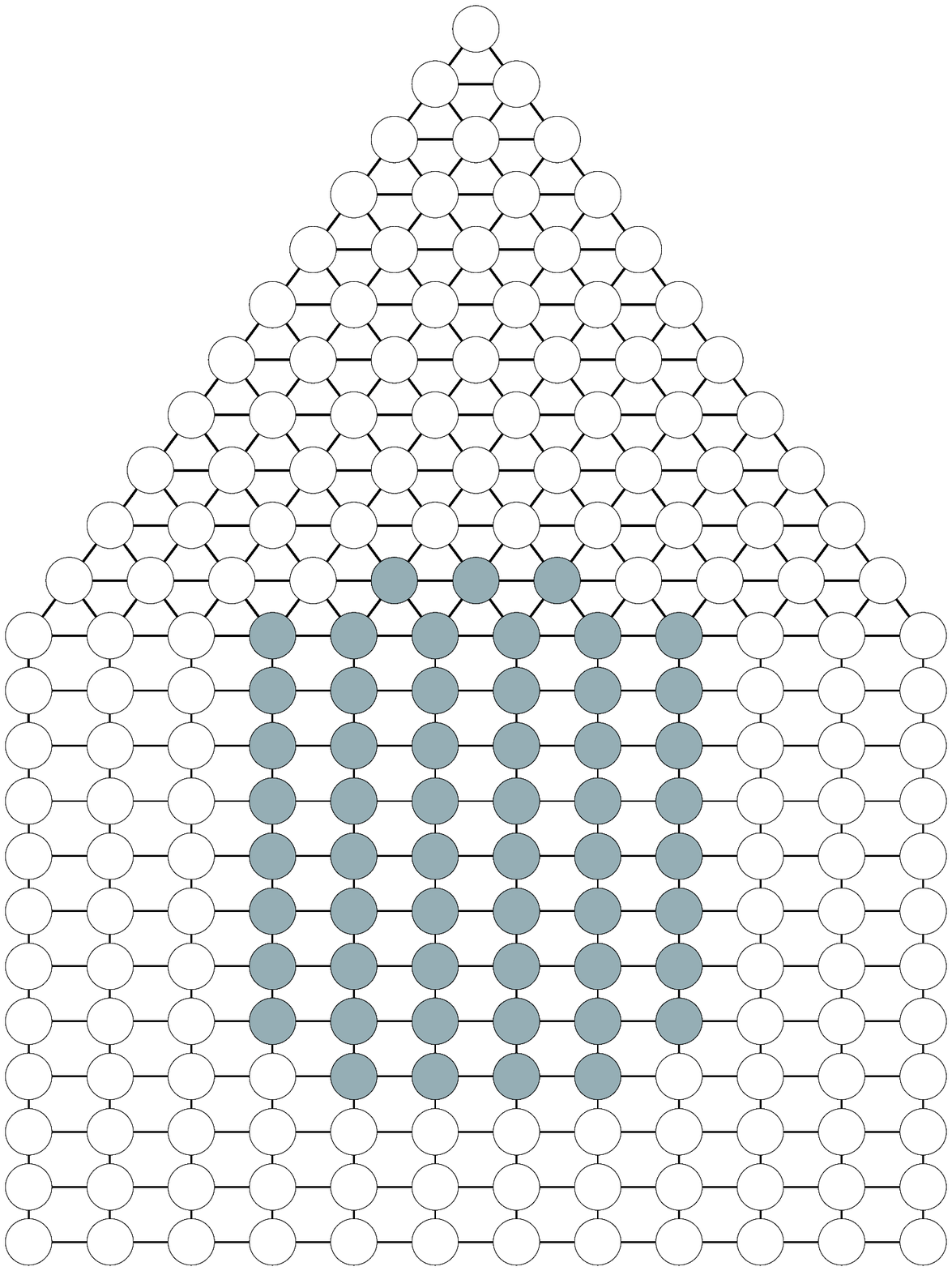} \qquad
\includegraphics[width=.45\textwidth]{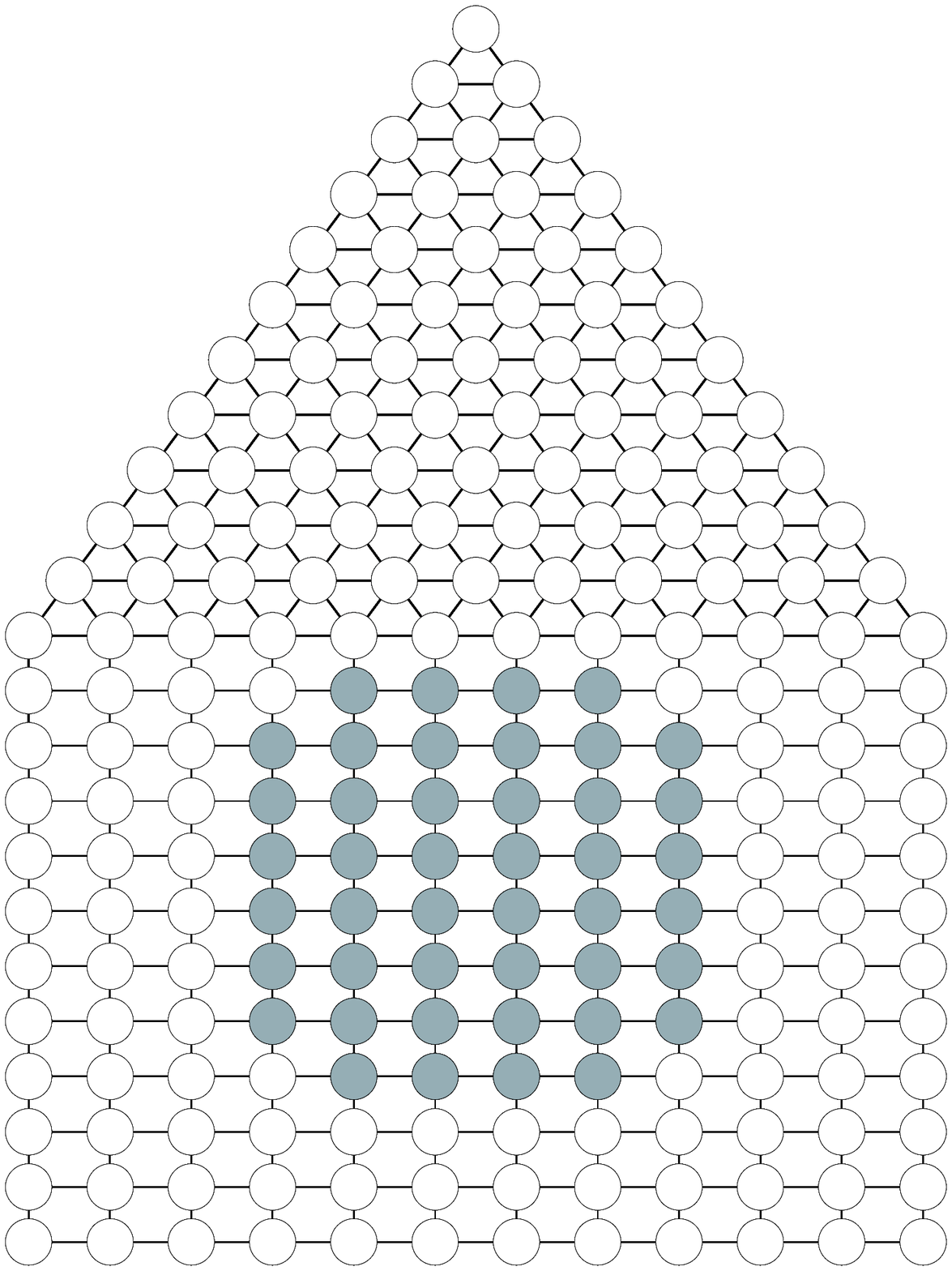}
\caption{An \ref{alg:MBO} evolution with $\tau=0.8$ on a graph consisting of adjoining regular lattices. The solution at chosen iterations is the characteristic function of the gray nodeset.
For the initial condition, given by the top left panel, the evolution reaches a steady state in 9 iterations. Iterations 3 (top right), 6 (bottom left), and 9 (bottom right) are shown. This example strengthens the `rule of thumb' that it is easier for a solution to become pinned on nodes with smaller degree.  See Section~\ref{sec:adj}. }
\label{fig:adj}
\end{center}
\end{figure}

\begin{figure}[t!]
\begin{center}
\includegraphics[width=.45\textwidth]{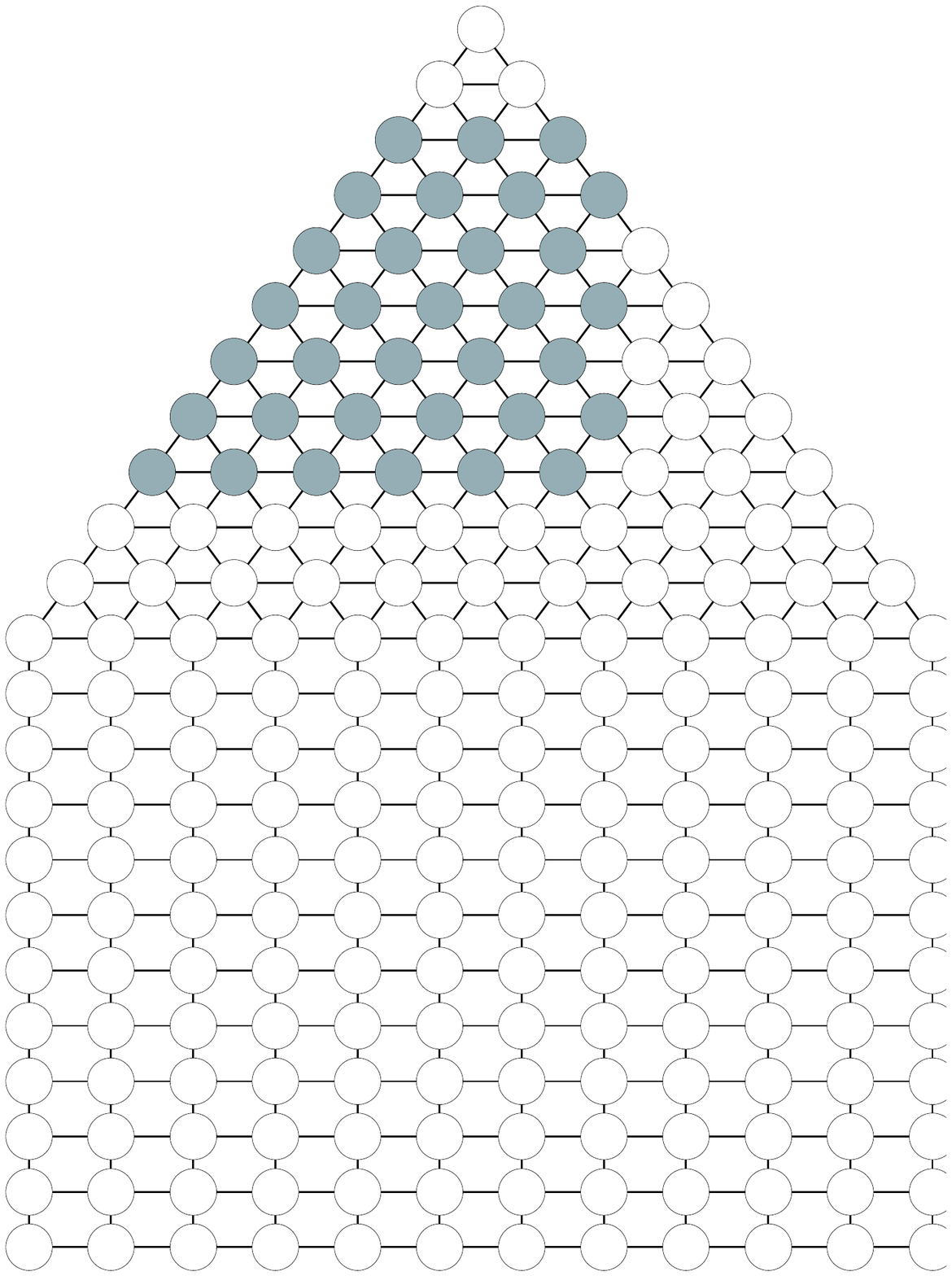} \qquad
\includegraphics[width=.45\textwidth]{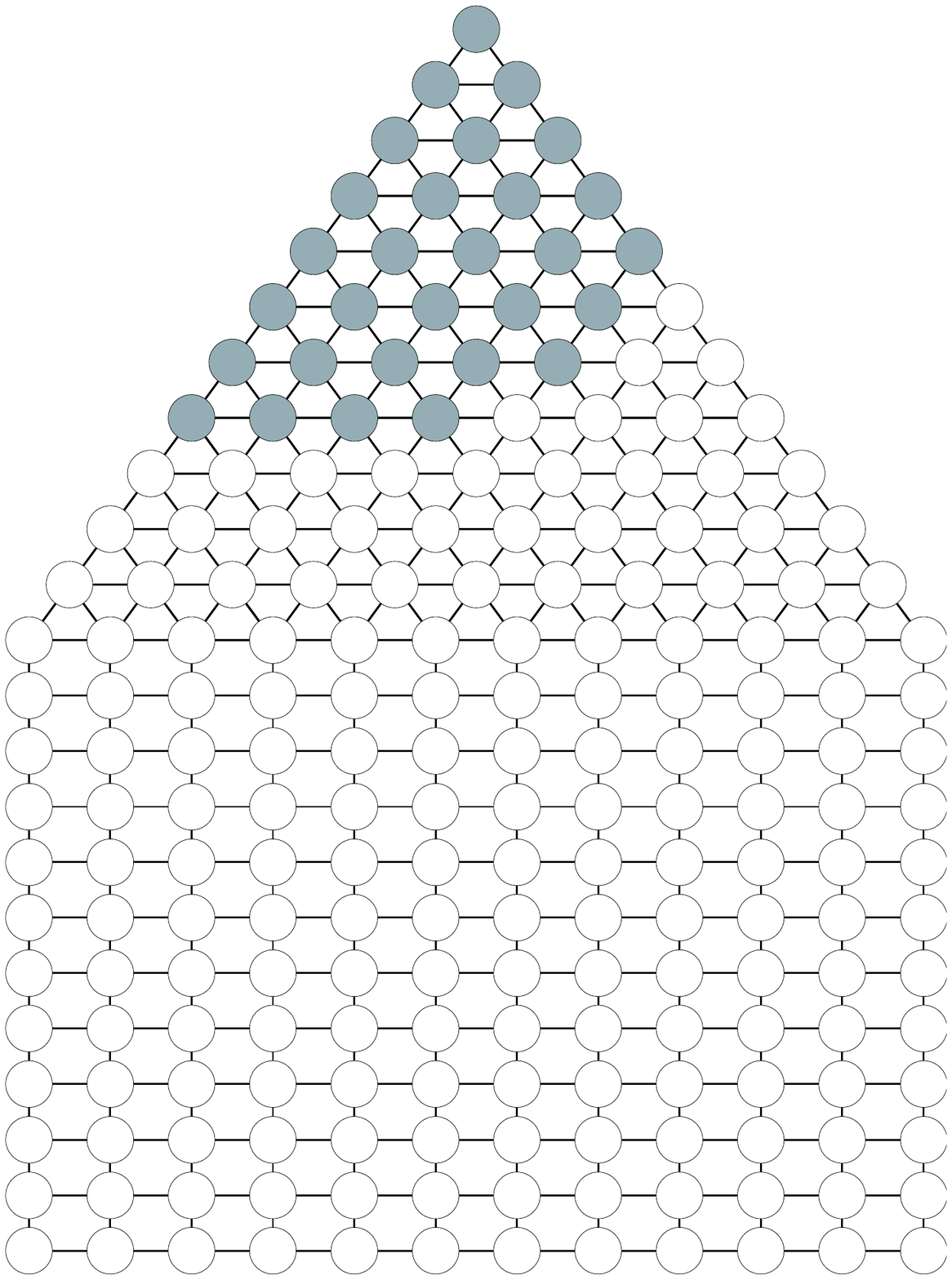}
\includegraphics[width=.45\textwidth]{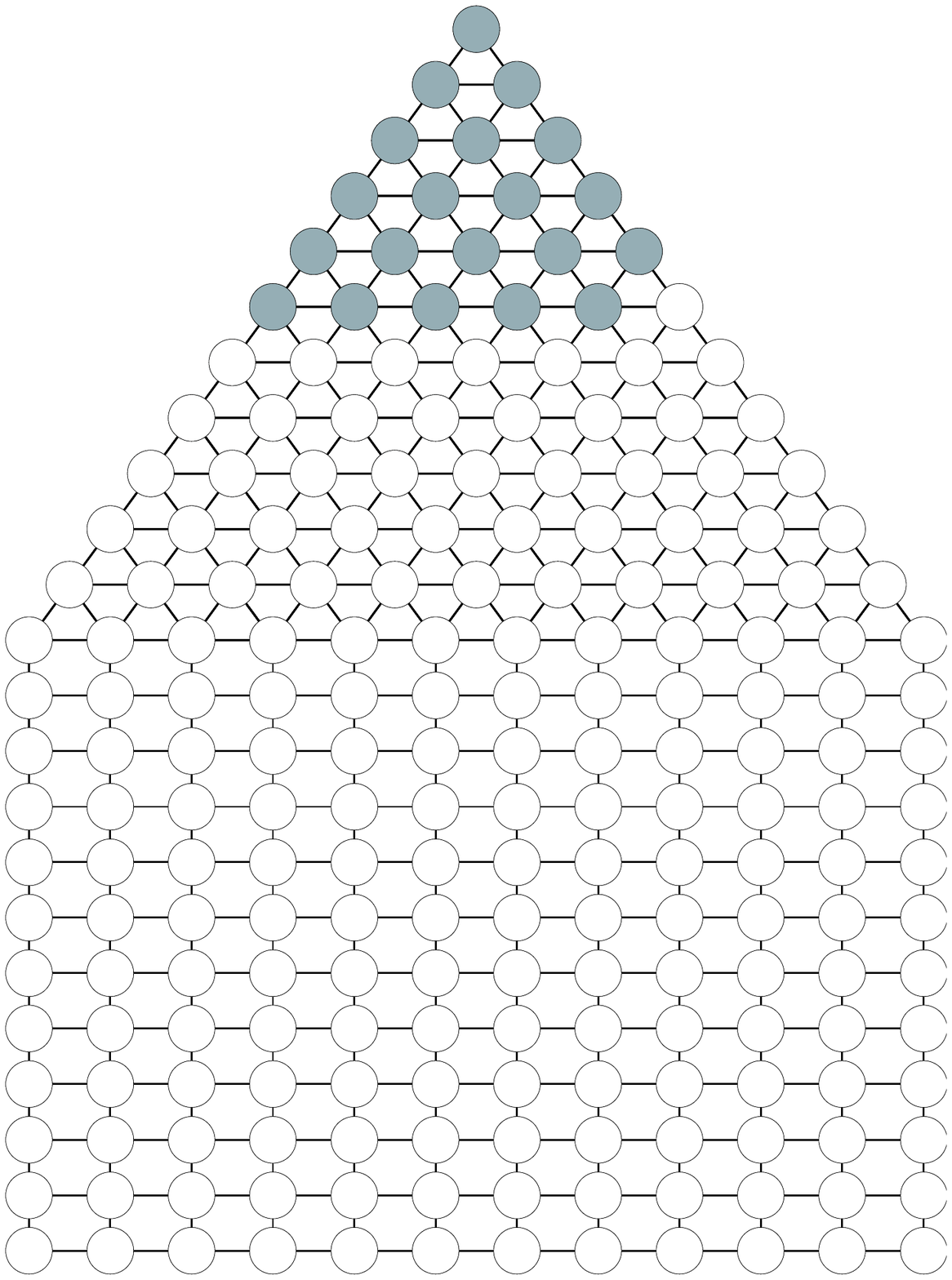} \qquad
\includegraphics[width=.45\textwidth]{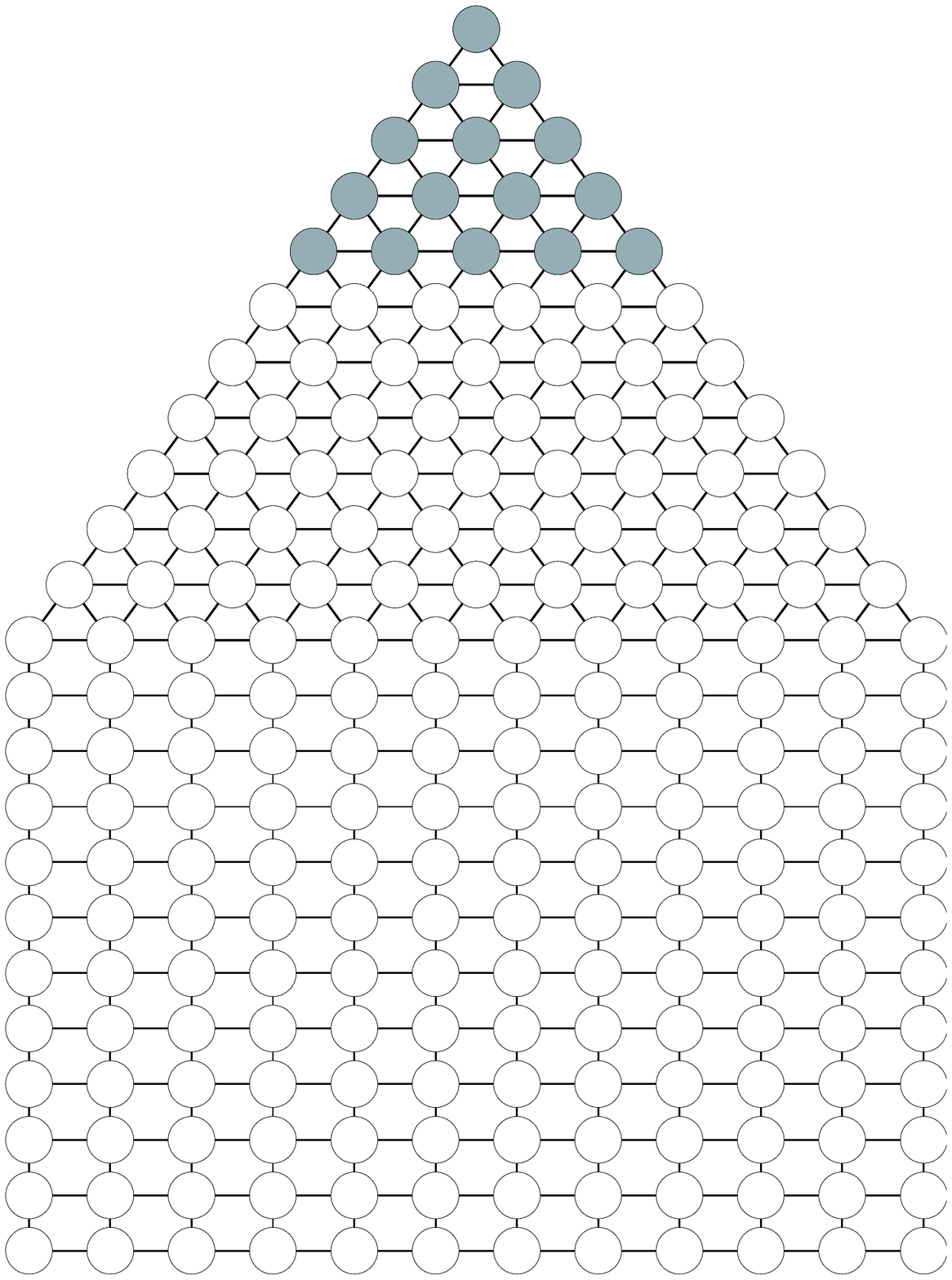}
\caption{An \ref{alg:MBO} evolution with $\tau=0.9$ on a graph consisting of adjoining regular lattices. The solution at chosen iterations is the characteristic function of the gray nodeset.
For the initial condition, given by the top left panel, the evolution reaches a steady state in 13 iterations. Iterations 4 (top right), 9 (bottom left), and 13 (bottom right) are shown. This example strengthens the `rule of thumb' that it is easier for a solution to become pinned on nodes with smaller degree.  See Section~\ref{sec:adj}. }
\label{fig:adjb}
\end{center}
\end{figure}

We consider the graph which is composed by adjoining a square and triangular lattice. See  Figure~\ref{fig:adj}. We take $r=0$ and $\omega_{ij} = 1$ for $i\sim j$ and zero otherwise. Note that the degree of a node in the triangular lattice is 6 and the degree of a node in the square lattice is 4.

To test the intuition from the star graph (see Section~\ref{sec:star}) that it is easier for the solution to pin on nodes with smaller degree, we consider the initial condition given in the top left panel of  Figure~\ref{fig:adj}. The mass is initially distributed over both the square and triangular lattice sites. We consider the \ref{alg:MBO} evolution with $\tau=0.8$. The solution moves freely on the lattice sites with degree $>4$, {\it i.e.}, on the triangular lattice. However, on the square lattice, the solution only `rounds corners'.

The nodes on the `border' of the graph (where the regular lattice was cut) have smaller degree. In Figure~\ref{fig:adjb}, we demonstrate that the solution can also be pinned on the border. Again, the initial condition is given in the top left panel. In this simulation, we take $\tau=0.9$. Away from the boundary, the solution set can again shrink freely. However, the solution becomes pinned on the border.

%\BlueComment{Make connection with $\kappa$-dependent limits on $\tau$ for MBO.}

%\BlueComment{We had an example where a node is added in an MBO iterate and then subsequently removed. This shows the non-monotonicity of the set evolution. I wanted to compare this to Brakke's Lemma discussing the same phenomena for the MCF of a non-convex set. Should we do this?}

\subsection{Two moons graph}\label{sec:moons}

\begin{figure}[t!]
\begin{center}
\includegraphics[width=.45\textwidth]{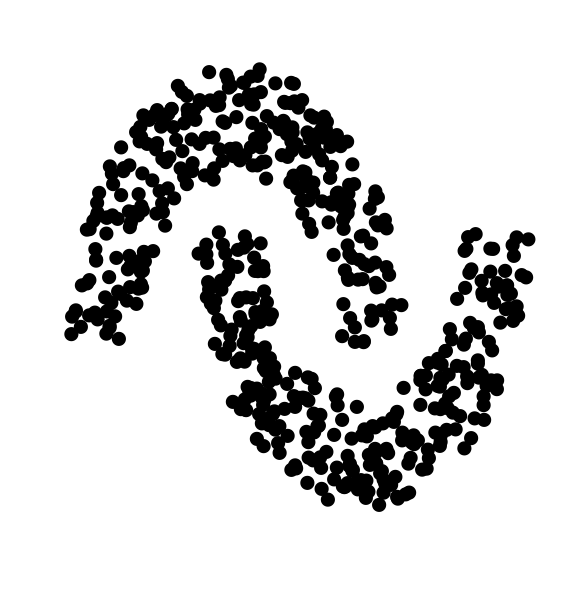}
\includegraphics[width=.45\textwidth]{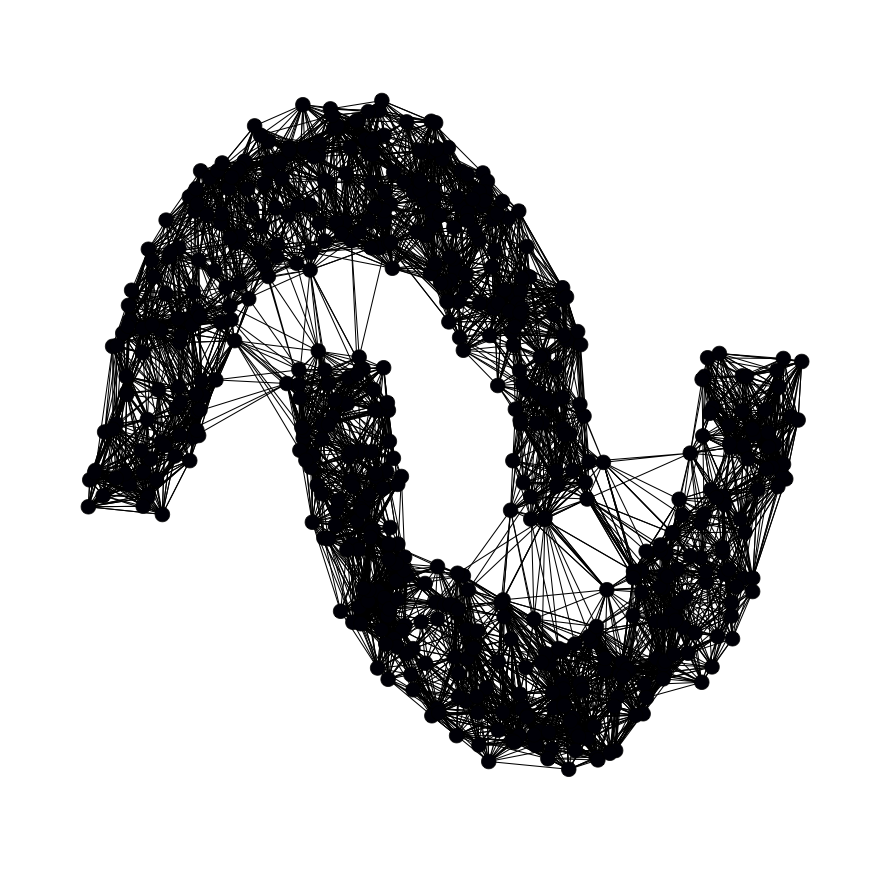} \\
\includegraphics[width=.45\textwidth]{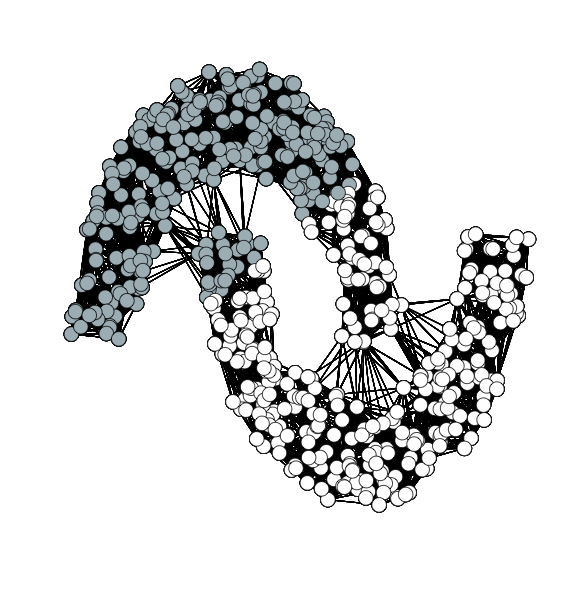}
\includegraphics[width=.45\textwidth]{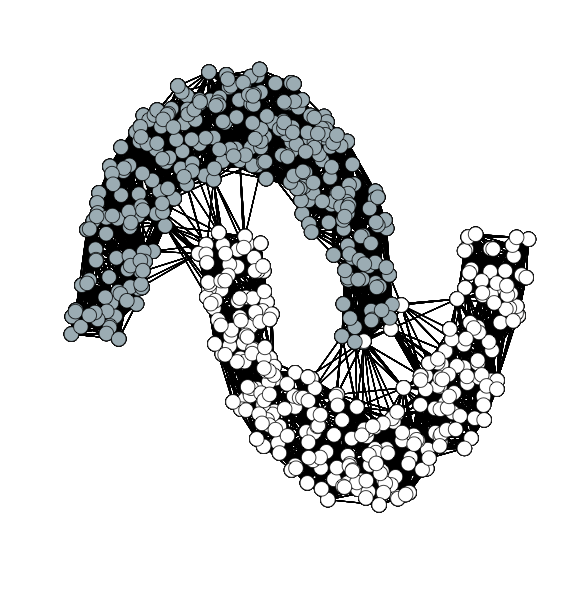}
\caption{(top) Construction of the two moons graph. (top left) Some random sampling of two moons. (top right) The connectivity of the graph resulting from connecting nearest neighbors after adding high dimensional noise. (bottom) An \ref{alg:MBO} evolution for $\tau=5$, starting with initial condition on the left and terminating at the stationary solution on the right in 9 iterations.  See Section~\ref{sec:moons}. }
\label{fig:moons}
\end{center}
\end{figure}

In this last example, we consider a graph which is widely used as a benchmark problem for partitioning algorithms. Our construction of the graph follows \cite{Buhler2009}. %see also \cite{Bresson2012}
 The graph is generated by first randomly distributing 600 points in a region described by two half arcs in $\mathbb R^2$---referred to as ``two moons''. See Figure~\ref{fig:moons} (top left).  The points are then embedded in $\mathbb R^{100}$ and randomly perturbed by i.i.d. Gaussian noise with mean zero and standard deviation $\sigma = 0.1$. Let $k=10$. The edge weights are chosen to be
$$
w_{ij} = \max \{ s_i(j) , s_j(i) \}, \quad \text{where } s_i(j) = e^{- \frac{4}{d_i^2} \| x_i - x_j \|^2},
$$
and $d_i$ is the Euclidean distance between $x_i$ and its $k$-th nearest neighbor. We then take the symmetrized $k$-nearest neighbors graph. This is given in Figure~\ref{fig:moons} (top right).

We consider the \ref{alg:MBO} evolution with $\tau=5$ and initial condition as shown in Figure~\ref{fig:moons} (bottom left). After 9 iterations, the \ref{alg:MBO} evolution converges to the state in Figure~\ref{fig:moons} (bottom right).

We want to stress that the two moons example is meant as an illustration of the \ref{alg:MBO} algorithm on a more complex toy graph. In this paper we do not aim to compete in terms of accuracy or efficiency with existing clustering methods, hence we will not focus on those aspects of the two moons example.

\section{Discussion and open questions}\label{sec:conjectures}

    Motivated by curvature flows in continuum mechanics, we described several analogous processes on graphs. In particular we used the graph total variation, or graph cut, to define curvature on graphs, which we then related to the graph Allen-Cahn equation, graph MBO scheme, and graph mean curvature flow. The continuum intuition for these problems suggests many results, some of which we proved in this paper, some which we have shown cannot hold on a graph because of the lack of infinitesimal length scales, and some which we state below as, still unproven, open questions.

        In a sense to be made precise, for a suitable choice of $\tau$ (not too small, not too large, depending on $\omega$, most likely depending on the graph's spectrum), the dynamics of \ref{alg:MBO} are expected to approximate those of graph MCF. %As explained above, the correct curvature ought to be related to the anisotropic total variation $\frac12 \TVa^1$.

    \begin{question}[MBO and graph Mean Curvature Flow]\label{con:MBOMCF} Is there an interval of $\tau$ (depending on $\eth t$), for which a single \ref{alg:MBO} iteration minimizes the \eqref{eq:MCFh} functional $\mathcal{F}$ from \eqref{eq:MCFfunctional}? For such a $\tau$, the graph mean curvature flow \eqref{eq:MCFh} would coincide with the \ref{alg:MBO} scheme (up to a time rescaling).
    \end{question}
    An approach to Question~\ref{con:MBOMCF}, uses the Taylor series expansion for the solution of the graph heat equation:
    \[
    e^{-t\Delta} \chi_S = \sum_{k=0}^\infty \frac1{k!} \left(-t \Delta \right)^k  \chi_S,
    \]
    for $S\subset V$. Hence, we can rewrite the Lyapunov functional $J$ from \eqref{eq:Lyapunov} as
\begin{align*}
&J(\chi_S) = \langle 1-\chi_S, \chi_S - \tau \Delta \chi_S\rangle_{\mathcal V} + R_S(\tau),\\
&\text{where } R_S(\tau) := \sum_{k=2}^\infty \frac{(-\tau)^k}{k!} \langle \chi_{S^c}, \Delta^k \chi_S\rangle_{\mathcal{V}}.
\end{align*}
Using $\langle 1-\chi_S, \chi_S\rangle_{\mathcal V} = 0$, $\langle 1, \Delta \chi_S\rangle_{\mathcal V} = 0$, and \eqref{eq:TVusingDelta}, we find
\[
J(\chi_S) = \tau \TVa^1(\chi_S) + R_S(\tau).
\]
    This connection between the Lyapunov functional $J$ and the total variation, and hence the MCF functional $\mathcal{F}$ from \eqref{eq:MCFfunctional}, strengthens the plausibility of a positive answer to Question~\ref{con:MBOMCF}. A more difficult question, which could be of great use in numerical problems, is how we can estimate the number iterations of \ref{alg:MBO} needed to go from some initial data to a minimizer of the Ginzburg-Landau functional or graph cut functional.

    \begin{question}[Minimizing graph cut] For any, a priori specified, approximation error, is there a local, quantitative bound on the number of iterations of \ref{alg:MBO} needed to approximate a minimizer of the graph cut functional $\TVa^q$ up to the specified accuracy? ``Local'' means here that the bound does not rely on the spectrum of the graph, but instead uses quantities such as graph curvature $\kappa^{q,r}$ or the total variation $\TVa^q(\chi_N)$ for some local graph neighborhood $N\subset V$. The analogous question can be asked for \eqref{eq:MCFh}.
    \end{question}

    In Theorem~\ref{thm:localtau} it was shown that, if the curvature at a given node is sufficiently large and the time step $\tau$ in \ref{alg:MBO} is chosen in the right interval, then the value at the node will change in one \ref{alg:MBO} iteration. The next question is the analogous statement for the Allen-Cahn equation,  \eqref{eq:ACEe}.

    \begin{question}[Non-freezing for Allen-Cahn]\label{conj:ACcurvature}  Let $\e$ be in some positive interval and let $u^\e$ be a solution of the Allen-Cahn equation \eqref{eq:ACEe} for this choice of $\e$. Suppose that the curvature $(\kappa_S^{1,r})_i$ of $S=\{j\in V: u^\e_j(t_0)\leq 0\}$ at  a node $i\in S$, is sufficiently large (possibly depending on $\e$). Is there is some interval of positive times such that $u^\e_i(t_0+h)>0$ for $h$ in this interval?
    \end{question}

    Because \eqref{eq:ACEe} is derived from the graph functional \eqref{eq:GL functional}, we suspect that the correct curvature in Question~\ref{conj:ACcurvature} is $\kappa_S^{1,r}$, the curvature related to the anisotropic functional $\frac12 \TVa^1$, which was identified as the $\Gamma$-limit of \eqref{eq:GL functional} for $\e\to 0$ in earlier work \cite{vanGennipBertozzi12}, and not the curvature which can be derived from the isotropic total variation functional $\TV$ as the continuum case might suggest at first glance. Since we have seen that pinning occurs for small enough $\e$, full convergence is not expected here, but the numerical examples of Section~\ref{sec:examples} suggest an approximate result for small $\e$ is feasible.

    \begin{question}[Allen-Cahn and graph Mean Curvature Flow]\label{ques:ACMCF} Is there an $\e>0$ such that, given the solution $u^\e$ to \eqref{eq:ACEe} for  some $\epsilon > 0$, there is an increasing sequence of times $t_n$ for which either the sets $S_n:= \{ j\in V: u^\e_j(t_n)\leq 0 \}$ or the sets $S_n:= \{ j\in V: u^\e_j(t_n)\geq 0 \}$ form a solution to the graph MCF?
	
	Furthermore, among sequences with this property is there exactly one sequence $\{t_n\}$ that is maximal in the following sense: there exists no sequence $\{t'_n\}$, of which $\{t_n\}$ is a strict subsequence, such that $\{S_{t'_n}\}_n\not\subset \{S_{t_n}\}_n$ and $\{S_{t'_n}\}$ is still a solution to the graph MCF?
    \end{question}

    A different question is how the graph MCF behaves in the continuum limit, when it is formulated on a sequence of graphs which are ever finer discretizations of some continuum space. We expect that it should give back the usual MCF in the continuum limit, or some anisotropic MCF, as the convergence results in \cite{vanGennipBertozzi12} show the final limit could crucially depend on the scaling in $\eth t$ and the discretization parameter (which will show up in the graph weights). This question is similar (and perhaps equivalent) to the convergence of discretization schemes for the usual MCF. Similar questions can be asked about the graph Allen-Cahn equation and graph MBO scheme.

    \begin{question}[Stability of graph MCF, MBO, and ACE, in the continuum limit] Suppose we are given any sequence of graphs $(V^k,\omega^{k}_{ij})$, $k \in \mathbb{N}$, converging in the Gromov-Hausdorff sense to a Riemannian manifold $(M,g)$. Is there a fixed time interval such that, as $k \to \infty$, any sequence generated by \ref{alg:MBO} with $\tau$ in this interval converges to a sequence generated by the (possibly anisotropic) continuum MBO algorithm in $M$ (with the Laplacian induced by $g$)? Accordingly, do solutions of \eqref{eq:ACEe} converge to solutions of the (possibly anisotropic) continuum Allen-Cahn equation in $M$, and do solutions to \eqref{eq:MCFh} converge to viscosity solutions (via the level set formulation) of (possibly anisotropic) continuum MCF in $M$, with initial data given by the limit of the initial data in each $V^n$? \end{question}

    As explained in Appendix \ref{sec:continuum}, MCF is closely related to certain models of continuum phase transitions, particularly Allen-Cahn and Ginzburg-Landau dynamics. However, another important connection with statistical mechanics involves  Ising models and other interacting particle systems, which are known to converge in the mesoscopic limit to flow by mean curvature. In work of Katsoulakis and Souganidis
    \cite{KatsSou1994,KatsSou1997} convergence to a viscosity solution of MCF is first proved. See also the related work of Funaki and Spohn
    \cite{FunSpo1997} where MCF is derived as a deterministic limit of stochastic Ginzburg-Landau dynamics. On the other hand, there is vast literature concerned with (for instance) the Ising model (and its generalizations) on graphs \cite{Lyons1989,Lyons00}, see also Durrett's book
    \cite{Dur2007}. This suggests the following question.

    \begin{question}[Possible probabilistic interpretations of graph MCF, MBO, and AC] Is \eqref{eq:MCFh} related to an interacting particle system on the underlying graph? Also, are there interacting particle systems or stochastic processes in $V$ that are related to \ref{alg:MBO} or  \eqref{eq:ACEe}?
    \end{question}

    Finding such a system would partly resolve the issue that a front moving on a graph in continuum time necessarily does so in a way that, from a continuum point of view, looks discontinuous (as discussed previously in this paper, in particular in Sections~\ref{sec:MCF} and~\ref{sec:AC}.), as the particle dynamics would be continuous in time and stochastic. %(see Remark \ref{rem: discrete phases}).
    The convergence results in \cite{KatsSou1994,KatsSou1997,FunSpo1997} show that the above question has an a priori higher chance of having a positive answer for a large graph, as it already holds in the continuum limit. An interesting direction would be to investigate the relation between such probabilistic interpretations and 'spread of information' dynamics such as \emph{bootstrap percolation} \cite{ChalupaLeathReich79}, \emph{gossip algorithms} \cite{Sha08}, and \emph{replicator dynamics} \cite{GhoshLermanSurachawalaVoevodskiTeng11,GhoshLerman12,SmithLermanGarcia-CardonaPercusGhosh13}.

	\subsection*{Acknowledgements}
This work was supported by the W. M. Keck Foundation, UC Lab Fees Research Grant 12-LR-236660, ONR grants N000141210838 and N000141210040, AFOSR MURI grant FA9550-10-1-0569, and NSF grants DMS-1118971, DMS-0968309, and DMS-0914856.	
Yves van Gennip did the bulk of the work for this paper while at UCLA. Braxton Osting is supported in part by a National Science Foundation (NSF) Postdoctoral Fellowship DMS-1103959. Nestor Guillen is supported in part by National Science Foundation (NSF) grant NSF-DMS-1201413.
We would like to acknowledge helpful  conversations with Milan Bradonji\'c, J\'er\^ome Darbon, Selim Esedo\=glu, Inwon Kim, Tijana Kosti\'c,  Stanley J. Osher, and Richard Tsai.

%\appendix
\begin{appendices}
\section{The continuum case}\label{sec:continuum}
In this appendix, we briefly review and provide references for the Allen-Cahn equation, the MBO algorithm, and mean curvature flow in the continuum setting.

\subsection{The continuum Allen-Cahn equation}\label{sec:continuumAC}

    The Allen-Cahn equation is a reaction-diffusion equation, given by
    \begin{equation}\label{eq:continuousAC}
    	  u_t = \Delta u + f(u),
    \end{equation}
    where $u\colon \mathbb{R}^n \times \mathbb{R}_+ \to \mathbb{R}$ and $\Delta$ is the standard Laplacian (although other linear elliptic operators can be considered as well), and $f$ is a non-linear function of the form $f=-W'$ where $W\colon \mathbb{R} \to \mathbb{R}$ is a double well potential with two global minima.  For simplicity, take $W(u) = (u+1)^2(u-1)^2$, where the minima are at $\pm 1$.

    A question which is always of interest is understanding the way that solutions to \eqref{eq:continuousAC} converge to equilibrium. For each fixed $x$, one expects that $u(x,t)$ approaches either $1$ or $-1$, as $t \to +\infty$, as these values correspond to the minima of $W$. This indicates that for very large $t$ the function $u$ defines two regions of $\mathbb{R}^n$, where it is very close to either $1$ or to $-1$, separated with a smooth transition layer in between.

    This asymptotic behavior is well understood nowadays.  Rescaling $(x,t)$ as  $(\frac{x}\e,\frac{t}{\e^2})$, we obtain the equation
    \begin{equation}\label{eq:continuousACrescaled}
    	 u^\e_t = \Delta u^\e +\e^{-2}f(u^\e).
    \end{equation}

    Note that for very small $\e$ the function $u^\e$ describes the long time behavior of the original $u$. Then, it is well known  (see  \cite{BarlesSonerSouganidis93,BronsardKohn91} for background and discussion) that, as $\e \to 0^+$, the solutions $u^\e(x,t)$ converge to a function which takes the value $-1$ in some set $S_t$ (depending on time) and takes the value $1$ in $S_t^c$. Here $S_t$ is a set whose boundary is evolving by mean curvature flow (see Section~\ref{subsec: continuum mcf}).

    Although the original motivation for studying \eqref{eq:continuousAC} was phase transitions, it is also the gradient flow of the Ginzburg-Landau functional. Precisely, equation \eqref{eq:continuousACrescaled} is the $L^2$ gradient flow of the functional
    \begin{equation}\label{eq:ContinuousGLRescaled}
    	 GL_\e(u) := \int \frac{\e}{2}|\nabla u|^2+\frac{1}{\e}W(u)\;dx.
    \end{equation}
    It is expected that solutions to \eqref{eq:continuousACrescaled}  converge to a local minimum of this functional, as $t \to +\infty$, thus schemes for \eqref{eq:continuousACrescaled}  could be used for approximating minima of \eqref{eq:ContinuousGLRescaled}. This is the application that serves as the biggest motivation in the graph setting.

For more information about reaction-diffusion equations with a polynomial nonlinearity we refer to \cite[Section 1.1]{Temam97}.

%%%%%%%%%%%%%%%%%%%%%%%%%%%%%%%%%%%%%%%%%%%%%%%%%%%%%%%%%%%%%%%%%%%%%%%
\subsection{Continuum mean curvature flow}\label{subsec: continuum mcf}
%%%%%%%%%%%%%%%%%%%%%%%%%%%%%%%%%%%%%%%%%%%%%%%%%%%%%%%%%%%%%%%%%%%%%%%

    Mean curvature flow (MCF) consists of the evolution of a closed, oriented hypersurface $\Sigma_t \subset \mathbb{R}^d$ over time, such that the inner normal velocity at a given point of $\Sigma_t$ is equal to the mean curvature of $\Sigma_t$ at that point. The study of such a flow has been greatly motivated by phase transition models in crystal growth and materials science, in particular since the important work of Allen and Cahn \cite{AllCah1979}. Starting with the seminal work of Brakke \cite{Bra1978}, the mathematical study of this flow has been vast, and has involved areas of mathematics ranging from differential geometry to stochastic control. The use of MCF is now widespread in the modeling of moving fronts \cite{CagSoc1994}. The reason why MCF is so ubiquitous in the phase transitions literature, is that many singular limits of reaction diffusion equations ({\it i.e.,} singular limits of Ginzburg-Landau dynamics) converge to motion by mean curvature. See \cite{BronsardKohn91,Peg1989,BarlesGeorgelin95} for precise convergence theorems and further discussion.

    A well known feature of MCF is both the formation of singularities and the occurrence of topological changes, regardless of the smoothness of the initial data. A significant portion of the literature on MCF deals with notions of weak solutions, the first of which goes back to Brakke \cite{Bra1978}. Partial regularity for weak solutions as well as regularity up to the first singular time have been widely studied \cite{Eck2004}.

    An equivalent formulation of the flow looks not only at the hypersurface $\Sigma_t$, but at the entire domain $\Omega_t$  bounded by it, so that $\partial \Omega_t = \Sigma_t$. Accordingly, it is said that $\Omega_t$ itself is evolving by mean curvature flow. This perspective is natural for phase transitions.

    Let $\phi(t,\cdot)\colon \mathbb R^{d} \rightarrow \mathbb R$ be the signed distance function to the set $\Omega$ at time $t$. From the level set method perspective \cite{Osher2003}, the motion by mean curvature\footnote{In the literature two related, but different, concepts of mean curvature appear. One corresponds with the factor $\text{div} \frac{\nabla \phi}{|\nabla \phi|}$ in \eqref{eq:MCF}, the other has a normalization factor $\frac1{d-1}$, where $d$ is the dimension of the space. This normalization by the dimension of the hypersurface justifies the ``mean'' part of ``mean curvature''.} of $\Omega_t$ corresponds to an initial value problem for a fully non-linear degenerate parabolic equation,
    \begin{align}
         &\phi_{t} = F(D^2\phi,\nabla \phi),  \quad  \phi(\cdot,0) = \phi_{0},\notag\\
         &\text{where } F(D^2\phi,\nabla \phi) = -|\nabla \phi|\text{div} \frac{\nabla \phi}{|\nabla \phi|}.\label{eq:MCF}
    \end{align}
    Then, when there is a smooth solution $\phi(x,t)$, the domains given by $\Omega_t := \{ \phi(\cdot,t)<0\}$ will be evolving by mean curvature flow and will start from the original domain $\Omega$. In general, even for an initial domain with a smooth boundary, a smooth solution might not exist for all times, and one must  work with viscosity solutions. In this context, the convergence of the MBO scheme \eqref{eq:MBO} (explained in Section~\ref{sec:continuumMBO}) to such viscosity solutions was proved by Evans \cite{Evans93}.

    It is worth remarking that Soner and Touzi in \cite{SonTou2003} interpret MCF as a stochastic control problem. In this interpretation, one controls a Brownian motion for which one is allowed to turn off diffusion in one given direction. The surface $\Sigma_t$ in this case arises as the set of points that can be reached with probability $1$. This probabilistic interpretation is quite different from those mentioned in the discussion at the end of Section \ref{sec:conjectures}.

    Finally, given the affinity with the graph setting, it is worthwhile to comment briefly on the more recent nonlocal mean curvature flow. Caffarelli and Souganidis \cite{CaffarelliSouganidis10} arrive at this flow by following a nonlocal and continuum analogue of \ref{alg:MBO}, where instead of using the Laplacian one uses a fractional power of the Laplacian $(-\Delta)^s$ with $s\in(0,1/2)$. A level set formulation based on viscosity solutions was developed later by Imbert \cite{Imb2009}.

\subsection{The continuum MBO algorithm}\label{sec:continuumMBO}
The Merriman, Bence, and Osher (MBO) algorithm \cite{MBO1992,MBO1993,MerrimanBenceOsher94}, also known as the threshold dynamics algorithm, approximates the dynamics of mean curvature flow \eqref{eq:MCF} by alternatively applying diffusion and thresholding operators. Let $\chi(t,\cdot)$ be the characteristic function of the set $\Omega_t$ at time $t$. Define the diffusion operator $\chi_{0} \mapsto u(t,\cdot) := e^{t \Delta } \chi_{0}$ to be the solution of the initial value problem
 \begin{align*}
 \dot{u} = \Delta u,  \quad  u(0) = \chi_{0}(\cdot).
 \end{align*}
Define the threshold operator
\begin{align*}
Pu(x) = \begin{cases}
1 & \ u(x) \geq \frac{1}{2} \\
0 & \ u(x) < \frac{1}{2}
\end{cases}.
\end{align*}
The MBO evolution of a set described by $u$  at time $T$  can then be succinctly written
\begin{align}
\label{eq:MBO}
\chi(T,\cdot) = \left(P e^{\tau \Delta} \right)^{k} \chi_{0}, \qquad \text{where } \tau = T/k
\end{align}
is the `time step' and $k$ is a parameter. In \cite{Evans93,BarlesGeorgelin95} convergence of the MBO algorithm to motion by mean curvature, defined in \eqref{eq:MCF}, as $k\uparrow \infty$, is proven.

The MBO scheme and its implementation has evolved considerably since its original proposal. We provide a short, non-exhaustive, overview here. In \cite{Mascarenhas92,ruuth1998}, the MBO scheme was extended to multiple-phase problems. In \cite{Ruuth96,ruuth1998b}, a spectral discretization of the MBO scheme for motion by mean curvature was proposed, which is much more efficient then finite difference approaches. This approach can be applied to both two-phase and multi-phase problems. Convergence for an anisotropic variant of the MBO scheme was proven in \cite{ChambolleNovaga06}, and in \cite{Esedoglu2008} diffusion generated motion was applied to higher order geometric motions. In \cite{EsedogluTsai2006}, the MBO scheme was extended to a thresholding method for approximating the evolution by gradient descent of the Mumford-Shah functional and applied to image segmentation problems. In \cite{ruuth2010diffusion} the authors study MBO-like schemes which use the signed distance function. Recent work \cite{esedoglu2013} presents new algorithms for multiphase mean curvature flow, based on a variational description of the MBO scheme.

It is well-known that, in a finite difference scheme for the MBO algorithm, the time step $\tau$ (equivalently $k$) in \eqref{eq:MBO} must be chosen carefully and in the limit as $k\uparrow \infty$, the discretized MBO evolution is stationary. In fact, when discussing the numerical implementation of the algorithm on a discrete grid, Merriman, Bence, and Osher \cite{MBO1992} observe:
  \begin{quote}
 ``The basic requirement is that [the time step, $\tau$,] be short enough so that the local analysis \ldots is valid, but also long enough so that the boundary curve moves by at least one grid cell on the spatial grid (otherwise the curve would be stuck).''
  \end{quote}
They derive heuristic upper and lower bounds on the time step, $\tau$, for the algorithm to approximate motion by mean curvature.

\section{Calculation of the first variation for graph total variation}\label{sec:formalfirstvar}

In \eqref{eq:variationofTV} we computed $\left.\frac{d}{dt}\right|_{t=0}\TVa^q(u+tv) = \langle \sgn(\nabla u),\nabla v\rangle$ using the convexity of $\TVa^q$.
In this section we generalize this fact to other kinds of graph total variation, which are expressible as
\begin{equation}\label{eqn: def general TV}
\mathcal{TV}(u) := \max\{ \langle \dvg \varphi, u\rangle_{\mathcal{V}} \colon  \varphi\in \mathcal{A}\},
\end{equation}
where $\mathcal{A}\subset \mathcal{E}_e$ is some admissible set of edge functions\footnote{Remember from Remark~\ref{rem:isotropicTV} that functions in $\mathcal{E}_e$ need not be skew-symmetric.
}, such that a maximizer $\varphi^u \in \mathcal{A}$ exists (even if it might not be unique). The key fact is that such a $\mathcal{TV}(u)$ is convex and might be studied via convex analysis. The convexity of $\mathcal{TV}$ is evident from its definition: $u \to \mathcal{TV}(u)$ is a scalar valued function given as the maximum of a family of linear functions
%\begin{equation*}
$u \mapsto \langle \dvg \varphi,u\rangle_{\mathcal{V}}$.	
%\end{equation*}
Let us recall some concepts from convex analysis \cite[Chapter 1, Section 5]{EkelandTemam76}, in particular, the subdifferential of $\mathcal{TV}$ at $u$. This set valued function is denoted by $\partial \mathcal{TV}(u)$ and given by
\begin{align*}
\partial \mathcal{TV}(u) &:= \{ v \in\mathcal{V} : \mathcal{TV}(u) < \infty\\
 &\hspace{0.7cm}\text{ and }  \forall\; w\in\mathcal{V}\;\;  \mathcal{TV}(w)\geq \mathcal{TV}(u)+\langle v,w-u\rangle_{\mathcal{V}}\}.
\end{align*}
That is, $v \in \mathcal{TV}(u)$ if and only if it is the slope of an affine function which is tangent to the graph of $\mathcal{TV}$ at $u$. In particular, at the points where $\mathcal{TV}(u)$ is differentiable $\partial \mathcal{TV}(u)$ consists of a single element: the gradient of $\mathcal{TV}(u)$ at $u$.

In the particular case of $\mathcal{TV}$, it follows that
%\begin{equation}\label{eqn: subdifferential of TV}
%\partial \mathcal{TV}(u) = \{ v=\dvg \varphi\mid \varphi \text{ achieves maximum in \eqref{eqn: def general TV} }	\}.
%\end{equation}
\begin{equation}\label{eqn: subdifferential of TV}
\partial \mathcal{TV}(u) = \{ v \in \mathcal{V} : \langle v,u\rangle_{\mathcal{V}} = \mathcal{TV}(u)\}.
\end{equation}
Indeed, note that by \eqref{eqn: def general TV}, for any $w\in\mathcal{V}$,
$\mathcal{TV}(w) = \langle \dvg \varphi^w,w\rangle_{\mathcal{V}}$,	
where $\varphi^w \in \mathcal{A}$ is a maximizer in \eqref{eqn: def general TV}. It follows that, if  $u \in\mathcal{V}$ is given, then
\begin{equation*}
v \in \partial \mathcal{TV}(u) \Leftrightarrow \langle \dvg \varphi^w,w\rangle_{\mathcal{V}}\geq \langle \dvg \varphi^u,u\rangle_{\mathcal{V}}+ \langle \dvg v,w-u\rangle_{\mathcal{V}}.
\end{equation*}
By choosing $w=0$ and $w=2u$, respectively, we find
$\langle v,u\rangle_{\mathcal{V}} = \langle \dvg \varphi^u,u\rangle_{\mathcal{V}} =  \mathcal{TV}(u)$.
This proves the set identity \eqref{eqn: subdifferential of TV} for any $u \in \mathcal{V}$.
%Indeed, note that \eqref{eqn: def general TV} says that for any $w\in\mathcal{V}$ and $\varphi \in \mathcal{A}$,
%\begin{equation*}
%\mathcal{TV}(w)\geq \langle \dvg \varphi,w\rangle_{\mathcal{V}}	
%\end{equation*}
%Since the above holds for any $\phi \in \mathcal{A}$, $w \in \mathcal{V}$, it follows that, if  $u \in\mathcal{V}$ and $\varphi_0\in \mathcal{A}$ are given, then
%\begin{equation*}
%\dvg\varphi_0 \in \partial \mathcal{TV}(u) \Leftrightarrow \mathcal{TV}(u) = \langle \dvg \varphi_0,u\rangle_{\mathcal{V}} \Leftrightarrow \mathcal{TV}(w)\geq \mathcal{TV}(u)+ \langle \dvg \varphi_0,w-u\rangle_{\mathcal{V}}.
%\end{equation*}
%This equivalence proves the set identity \eqref{eqn: subdifferential of TV} for any $u \in \mathcal{V}$.

%On the other hand, the convexity of $\mathcal{TV}(u)$ implies it is a locally Lipschitz function, making it differentiable for a.e.\footnote{Here a.e. is with respect to Lebesgue measure in $\mathcal{V}$ (which is uniquely determined by its inner product).} $u \in \mathcal{V}$ by Rademacher's theorem \cite[Chapter 6, Section 6.2, Theorem 2]{EvansGariepy92}. Therefore, $\partial \mathcal{TV}(u)$ contains a single element for a.e. $u \in \mathcal{V}$.
%and for those $u$ the maximum defining $\mathcal{TV}(u)$ is achieved by a unique $\varphi \in \mathcal{A}$. Let us call this maximizer $\varphi^m$ (whenever it is well defined), then at least for a.e. $u$, it follows that
On the other hand, the convexity of $\mathcal{TV}(u)$ implies it is a locally Lipschitz function, making it differentiable for a.e.\footnote{Here a.e. is with respect to Lebesgue measure in $\mathcal{V}$ (which is uniquely determined by its inner product).} $u \in \mathcal{V}$ by Rademacher's theorem \cite[Chapter 6, Section 6.2, Theorem 2]{EvansGariepy92}. Therefore, for a.e. $u \in \mathcal{V}$, $\partial \mathcal{TV}(u)$ contains a single element $v=\dvg \varphi^u$. Then, for a.e. $u$, it follows that
\begin{equation*}
\left.\frac{d}{dt}\right|_{t=0}\mathcal{TV}(u+tv) = \langle \dvg \varphi^u,v\rangle_{\mathcal{V}}	
\end{equation*}

%To stress $\varphi^m$'s dependence on $u\in \mathcal{V}$ we will write $\varphi^m(u)$. Let $v\in \mathcal{V}$ and $t\in \R$, then
%\begin{align}
%\left.\frac{d}{dt} \mathcal{TV}(u+tv) \right|_{t=0} &= \left.\frac{d}{dt} \langle \dvg \varphi^m(u+tv), u+tv\rangle_{\mathcal{V}} \right|_{t=0} \notag\\
%&= \langle \dvg \varphi^m(u), v\rangle_{\mathcal{V}} + \left\langle \left.\frac{d}{dt}\dvg(\varphi^m(u+tv))\right|_{t=0}, u\right\rangle_{\mathcal{V}}. %\label{eq:generalTV}
%\end{align}

%We compute
%\begin{align*}
%\left(\left.\frac{d}{dt}\dvg(\varphi^m(u+tv))\right|_{t=0}\right)_i &= \frac12 d_i^{-r} \sum_{j\in V} \omega_{ij}^q %\left.\frac{d}{dt}\big(\varphi^m_{ji}(u+tv)-\varphi^m_{ij}(u+tv)\big)\right|_{t=0}\\
%&=  \frac12 d_i^{-r} \sum_{j\in V} \omega_{ij}^q \sum_{k\in V} \big(D_k \varphi^m_{ji}(u)-D_k \varphi^m_{ij}(u)\big) v_k,
%\end{align*}
%where $D_k$ denotes the partial derivative with respect to the $k^{\text{th}}$ variable $u_k$. If we formally assume smoothness $\varphi^m$ (which, for example, is not justified if $\varphi^m = \varphi^a$, from \eqref{eq:varphiaTV}, on its full domain), then
%\begin{align*}
%&\left\langle \left.\frac{d}{dt}\dvg(\varphi^m(u+tv))\right|_{t=0}, u\right\rangle_{\mathcal{V}} = \frac12 \sum_{i,j,k\in V} \omega_{ij}^q \big(D_k %\varphi^m_{ji}(u)-D_k \varphi^m_{ij}(u)\big) v_k u_i\\
%& \hspace{0.1cm} =\frac12 \sum_{i,j,k\in V} \omega_{ij}^q (u_i-u_j) D_k \varphi^m_{ji}(u) v_k = \frac12 \sum_{i,j,k\in V} \omega_{ij}^{2q-1} (\nabla u)_{ij} D_k %\varphi^m_{ji}(u) v_k = \sum_{k\in V} \langle \nabla u, D_k \varphi^m(u)\rangle_{\mathcal{E}} v_k.
%\end{align*}

Now we can consider particular choices for $\mathcal{TV}$ and hence for $\mathcal{A}$. For $\mathcal{TV} = \TVa^q$, we have $\varphi^u = \varphi^a$ from \eqref{eq:varphiaTV}, as already explained in Remark \ref{rem: first variation for TVa q}. Similar computations can be done if we take $\mathcal{A}$'s corresponding to $\varphi^u$ given respectively by
\begin{align*}
\varphi^u(u) &= \varphi^{\mathcal{E}}(u) = \left\{\begin{array}{ll}\frac{\nabla u}{\|\nabla u\|_{\mathcal{E}}} & \text{if } \|\nabla u\|_{\mathcal{E}} \neq 0,\\ 0 & \text{if } \|\nabla u\|_{\mathcal{E}} = 0,\end{array}\right.
\quad \text{and}\\
\varphi_{ij}^u(u) &= \varphi_{ij}^{\TV}(u) = \left\{ \begin{array}{ll}\frac{(\nabla u)_{ij}}{|\nabla u|_i} & \text{if } |\nabla u|_i \neq 0,\\ 0 & \text{if } |\nabla u|_i=0,\end{array}\right.
\end{align*}
\textit{i.e.}, optimal $\varphi$'s for $\mathcal{TV}(u)=\|\nabla u\|_{\mathcal{E}}$\footnote{Note that we can write
$
\|\nabla u\|_{\mathcal{E}}=\max\{ \langle \dvg \varphi, u\rangle_{\mathcal{V}} \colon \varphi\in \mathcal{E},\,\, \|\varphi\|_{\mathcal{E}}\leq 1\}.
$}
and $\mathcal{TV}(u)=\TV(u)$, respectively (see Section~\ref{sec:setup}).%, show that the second term in \eqref{eq:generalTV} again vanishes in both cases.
The previous analysis shows the first variations in these cases are given by the $\mathcal{V}$-inner product with	
\begin{align*}
\dvg \varphi^{\mathcal{E}}  &= \left\{ \begin{array}{ll} \frac{\Delta u}{\|\nabla u\|_{\mathcal{E}}} & \text{if } \|\nabla u\|_{\mathcal{E}} \neq 0,\\ 0 & \text{if }  \|\nabla u\|_{\mathcal{E}} = 0,\end{array}\right.\\
(\dvg \varphi^{\TV})_i &= \left\{\begin{array}{ll}\frac12 d_i^{-r} \Big[\sum_{j\in V\colon |\nabla u|_j\neq 0} \omega_{ij}^q \frac{(\nabla u)_{ji}}{|\nabla u|_j}&\\ \hspace{2.8cm} - \sum_{j\in V} \omega_{ij}^q \frac{(\nabla u)_{ij}}{|\nabla u|_i}\Big] & \text{if } |\nabla u|_i \neq 0,\\ \frac12 d_i^{-r} \sum_{j\in V\colon |\nabla u|_j\neq 0} \omega_{ij}^q \frac{(\nabla u)_{ji}}{|\nabla u|_j} &\text{if } |\nabla u|_i = 0.\end{array}\right.
\end{align*}
For the latter we can also write
\begin{align*}
(\dvg \varphi^{\TV})_i &= \frac12 d_i^{-r} \sum_{j\in V} \omega_{ij}^q \left(\frac{u_i-u_j}{|\nabla u|_j} + \frac{u_i-u_j}{|\nabla u|_i}\right)\\
 &= \frac12 d_i^{-r} \sum_{j\in V} \omega_{ij}^q \left(\frac1{|\nabla u|_j} + \frac1{|\nabla u|_i}\right) (u_i-u_j),
\end{align*}
where we have to remember that $\omega_{ij}^q \frac{u_i-u_j}{|\nabla u|_j}$ and $\frac1{|\nabla u|_j}$ are to be interpreted as $0$ whenever $|\nabla u|_j=0$ for any $j\in V$ (including $j=i$). Because the node function $\dvg \varphi^{\TV}$ is the first variation of the isotropic graph total variation, in the literature it is sometimes referred to as curvature or 1-Laplacian. In this paper we have argued why the use of the anisotropic total variation $\TVa$ to define curvature, as in \eqref{eq:dvgnu}, is a more natural choice on graphs.

\end{appendices}

%{\small
%\bibliographystyle{amsalpha}
%\bibliography{../bibliography/curv}}

\end{document}